\numberwithin{equation}{section}
\newtheorem{theorem}{Theorem}[section]
\newtheorem{proposition}[theorem]{Proposition}
\newtheorem{lemma}[theorem]{Lemma}
\newtheorem{corollary}[theorem]{Corollary}
\newtheorem{claim}[theorem]{Claim}
\theoremstyle{definition}
\newtheorem{definition}[theorem]{Definition}
\theoremstyle{remark}
\newtheorem{remark}[theorem]{Remark}
\newtheorem{example}[theorem]{Example}
\begin{document}
\title{Kumjian-Pask algebras of finitely-aligned higher-rank graphs}
\author{Lisa Orloff Clark and Yosafat E. P. Pangalela}
\address{Lisa Orloff Clark and Yosafat E. P. Pangalela\\
Department of Mathematics and Statistics\\
University of Otago\\
PO Box 56\\
Dunedin 9054\\
New Zealand}
\email{lclark@maths.otago.ac.nz, yosafat.pangalela@maths.otago.ac.nz}
\subjclass{16S99 (Primary); 16S10 (Secondary)}
\keywords{Kumjian-Pask algebra, finitely aligned $k$-graph, Steinberg algebra}

\begin{abstract}
We extend the the definition of Kumjian-Pask algebras to include algebras
associated to finitely aligned higher-rank graphs. We show that these
Kumjian-Pask algebras are universally defined and have a graded uniqueness
theorem. We also prove the Cuntz-Kreiger uniqueness theorem; to do this, we
use a groupoid approach. As a consequence of the graded uniqueness theorem,
we show that every Kumjian-Pask algebra is isomorphic to the Steinberg
algebra associated to its boundary path groupoid. We then use Steinberg
algebra results to prove the Cuntz-Kreiger uniqueness theorem and also to
characterize simplicity and basic simplicity.
\end{abstract}

\thanks{This research was done as part of the second author's PhD thesis at
the University of Otago under the supervision of the first author and Iain
Raeburn. Thank you to Iain for his guidance.}
\maketitle

\section{Introduction}

In the 1990s, $C^{\ast }$-algebras of row-finite directed graphs were
introduced in \cite{BPRS00,KPR98,KPRR97}. Since their first appearance,
these $C^{\ast }$-algebras have been intensively studied (for example, see
\cite{R08}). Some of the earliest results about these algebras include the
existence of a universal family, the gauge-invariant uniqueness theorem, and
the Cuntz-Krieger uniqueness theorem. 

Higher-rank graph $C^{\ast}$-algebras were introduced by Kumjian and Pask in
\cite{KP00} as a generalisation of the $C^{\ast }$-algebras of directed
graphs. In \cite{KP00}, Kumjian and Pask limit their focus to row-finite
higher-rank graphs with no sources. Later, Raeburn, Sims and Yeend extended
the coverage by introducing $C^{\ast}$-algebras of locally convex,
row-finite higher-rank graphs in \cite{RSY03} and then finitely aligned
higher-rank graphs in \cite{RSY04}. It is in the finitely aligned setting
where graphs that fail to be row-finite are considered. Once again Raeburn,
Sims and Yeend establish the existence of a universal family, the
gauge-invariant uniqueness theorem, and the Cuntz-Krieger uniqueness theorem.

On the other hand, Leavitt path algebras were developed independently by
Ara, Moreno, and Pardo in \cite{AMP07} and Abrams and Aranda Pino in \cite%
{AA05}. A complex Leavitt path algebra is a purely algebraic structure
constructed from a directed graph that sits densely inside the graph $%
C^{\ast}$-algebra. Tomforde showed in \cite{T11} that one can generalise
further and define Leavitt path $R$-algebras where $R$ is any commutative
ring with identity. Tomforde proved the existence of a universal family, the
graded uniqueness theorem (which is the algebraic analogue of the
gauge-invariant uniqueness theorem), and the Cuntz-Krieger uniqueness
theorem for Leavitt path $R$-algebras. Tomforde's proofs in \cite{T11} use
techniques that are similar to those employed by Raeburn for Leavitt path $%
\mathbb{C}$-algebras in \cite{Graph Algebras} and in Tomforde's earlier
paper \cite{T07} for Leavitt path $K$-algebras where $K $ is an arbitrary
field.

Moving to higher-rank graphs, Kumjian-Pask $R$-algebras were introduced in
\cite{ACaHR13} and include the class of Leavitt path algebras. Kumjian-Pask
algebras are the algebraic analogue of the higher-rank graph $C^{\ast}$%
-algebras of \cite{KP00}. As in \cite{KP00}, the authors of \cite{ACaHR13}
consider row-finite higher-rank graphs with no sources. Later, Clark, Flynn
and an Huef developed Kumjian-Pask algebras for locally convex, row-finite
higher-rank graphs in \cite{CFaH14}. To complete the final algebraic piece,
in this paper we introduce Kumjian-Pask algebras for finitely aligned
higher-rank graphs. We will establish the existence of a universal family,
the graded-invariant uniqueness theorem, and the Cuntz-Krieger uniqueness
theorem.

Our motivation to consider this class of higher-rank graphs comes from our
desire to establish an algebraic version of \cite[Theorem~4.1]{P15}: there
Pangalela shows that the Toeplitz $C^{\ast}$ algebra associated to a
row-finite graph $\Lambda$ can be realized as the graph $C^{\ast}$-algebra
associated to a higher-rank graph constructed from $\Lambda$, called $%
T\Lambda$. In this setting $T\Lambda$ has sources and is not locally convex.

Let $\Lambda $ be a finitely aligned $k$-graph and let $R$ be a commutative
ring with identity. We define a Kumjian-Pask $\Lambda $-family (Definition %
\ref{KP-family}) and show the existence of a universal Kumjian-Pask algebra $%
{\normalsize \operatorname{KP}}_{R}\left( \Lambda \right) $ that is a $\mathbb{Z}%
^{k} $-graded $R$-algebra in Proposition \ref{universal-KP-family}. We then
prove the graded-invariant uniqueness theorem in Theorem \ref%
{the-graded-uniqueness-theorem}. Up to this point, our techniques mirror the
$C^{\ast }$-algebraic techniques of \cite{RSY04}. However, the proof of the
Cuntz-Krieger uniqueness theorem of \cite{RSY04} is highly analytic so we
must use an alternate approach. We have chosen a groupoid approach.

In Section \ref{Section-Steinberg-algebra}, we introduce groupoids and \emph{%
Steinberg algebras}. Then, given a finitely aligned higher-rank graph $%
\Lambda $, we build the associated boundary-path groupoid $\mathcal{G}%
_{\Lambda }$ as in \cite{Y07}. We then use the graded-invariant uniqueness
theorem (Theorem \ref{the-graded-uniqueness-theorem}) to show that the
Kumjian-Pask algebra ${\normalsize \operatorname{KP}}_{R}\left( \Lambda \right) $ is
isomorphic to the Steinberg algebra $A_{R}(G_{\Lambda })$ in Proposition \ref%
{KP-is-isomorphic-to-Steinberg-algebras}. With this isomorphism in place, we
aim to use results about Steinberg algebras to establish results about
Kumjian-Pask algebras.

First we establish how certain properties of $\Lambda$ translate to
properties of $\mathcal{G}_{\Lambda}$; we do this in Section \ref%
{Section-aperiodic-effective} and Section \ref{Section-cofinal-minimal}. Of
interest in its own right, we show that a higher-rank graph $\Lambda $ is
\emph{aperiodic} if and only if the boundary-path groupoid $\mathcal{G}%
_{\Lambda }$ is \emph{effective} in Proposition \ref{aperiodic-iff-effective}%
. We also show in Proposition \ref{cofinal-iff-minimal}, that a higher-rank
graph $\Lambda $ is \emph{cofinal} if and only if $\mathcal{G}_{\Lambda }$
is \emph{minimal}.

Now in Section \ref{Section-the-CK-uniqueness-theorem}, we prove the
Cuntz-Krieger uniqueness theorem. This theorem only applies to Kumjian-Pask
algebras associated to aperiodic graphs. The proof is a simpy an application
of the Cuntz-Krieger uniqueness theorem for Steinberg algebras \cite[Theorem
3.2]{CE-M15} which applies to effective groupoids. Note that our technique
gives an alternate proof of the Cuntz-Krieger uniqueness theorem in the
special cases of Leavitt path algebras in \cite{T11} and the row-finite
Kumjian-Pask algebras of \cite{ACaHR13,CFaH14}.

Finally, in Section \ref{Section-Basic-Simpllicity}, we give necessary and
sufficient conditions for ${\normalsize \operatorname{KP}}_{R}\left( \Lambda \right)$
to be basically simple in Theorem \ref{basic-simplicity} and simple in
Theorem \ref{simplicity}. These two results are a consequence of the
characterisation of basic simplicity and simplicity of the Steinberg algebra
$A_{R}\left( \mathcal{G}_{\Lambda }\right) $ (see Theorem 4.1 and Corollary
4.6 of \cite{CE-M15}).

\section{Background}

Let $\mathbb{N}$ be the set of non-negative integers and let $k$ be a
positive integer. We write $n\in \mathbb{N}^{k}$ as $\left( n_{1},\ldots
,n_{k}\right) $ and for $m,n\in \mathbb{N}^{k}$, we write $m\leq n$ to
denote $m_{i}\leq n_{i}$ for $1\leq i\leq k$. We also write $m\vee n$ for
their coordinate-wise maximum and $m\wedge n$ for their coordinate-wise
minimum. We denote the usual basis in $\mathbb{N}^{k}$ by $\left\{
e_{i}\right\} $.

A \emph{directed graph} or $1$\emph{-graph} $E=\left( E^{0},E^{1},r,s\right)
$ consists of countable sets of vertices $E^{0}$, edges $E^{1}$ and
functions $r,s:E^{1}\rightarrow E^{0}$, which denote range and source maps,
respectively. We follow the conventions of \cite{CBMS} and write $\lambda
\mu $ to denote the composition of paths $\lambda $ and $\mu $ with $s\left(
\lambda \right) =r\left( \mu \right) $. Thus a path of length $n\in \mathbb{N%
}$ is a sequence $\lambda =\lambda _{1}\cdots \lambda _{n}$ of edges $%
\lambda _{i}$ with $s\left( \lambda _{i}\right) =r\left( \lambda
_{i+1}\right) $ for $1\leq i\leq n-1$. We also have $s\left( \lambda \right)
:=s\left( \lambda _{n}\right) $ and $r\left( \lambda \right) :=r\left(
\lambda _{1}\right) $.

\begin{remark}
\label{convetion-of-graph}We use this convention of paths because we view
the collection of paths as a category.
\end{remark}

\subsection{Higher-rank graphs.}

For a positive integer $k$, we regard the additive semigroup $\mathbb{N}^{k}$
as a category with one object. A \emph{higher-rank graph} or $k$\emph{-graph}
$\Lambda =\left( \Lambda ^{0},\Lambda ,r,s\right) $ is a countable small
category $\Lambda $ with a functor $d:\Lambda \rightarrow \mathbb{N}^{k}$,
called the \emph{degree map}, satisfying the \emph{factorisation property}:
for every $\lambda \in \Lambda $ and $m,n\in \mathbb{N}^{k}$ with $d\left(
\lambda \right) =m+n$, there are unique elements $\mu ,\nu \in \Lambda $
such that $\lambda =\mu \upsilon $ and $d\left( \mu \right) =m$, $d\left(
\nu \right) =n$. We then write $\lambda \left( 0,m\right) $ for $\mu $ and $%
\lambda \left( m,m+n\right) $ for $\nu $.

We write $\Lambda ^{0}$ to denote the set of objects in $\Lambda $ and we
identify each object $v\in \Lambda ^{0}$ with the identity morphism at the
object, which, by the factorisation property, is the only morphism with
range and source $v$. We then regard elements of $\Lambda ^{0}$ as \emph{%
vertices}$.$ For $n\in \mathbb{N}^{k}$, we define%
\begin{equation*}
\Lambda ^{n}:=\left\{ \lambda \in \Lambda :d\left( \lambda \right) =n\right\}
\end{equation*}%
and call the elements $\lambda $ of $\Lambda ^{n}$ \emph{paths of degree }$n$%
. For each $\lambda \in \Lambda $ we say $\lambda $ has \emph{\ source }$%
s\left( \lambda \right) $ and \emph{\ range } $r\left( \lambda \right) $.
For $v\in \Lambda ^{0}$, $\lambda \in \Lambda $ and $E\subseteq \Lambda $,
we define
\begin{align*}
vE& :=\left\{ \mu \in E:r\left( \mu \right) =v\right\} \text{,} \\
\lambda E& :=\left\{ \lambda \mu \in \Lambda :\mu \in E,r\left( \mu \right)
=s\left( \lambda \right) \right\} \text{,} \\
E\lambda & :=\left\{ \mu \lambda \in \Lambda :\mu \in E,s\left( \mu \right)
=r\left( \lambda \right) \right\} \text{.}
\end{align*}

\begin{remark}
In older references, for example \cite{KP00,RSY03}, $v\Lambda $ is denoted
by $\Lambda \left( v\right) $.
\end{remark}

\begin{example}[{\protect\cite[Example 2.2.(ii)]{RSY03}}]
Let $k\in \mathbb{N}$ and $m\in \left( \mathbb{N\cup }\left\{ \infty
\right\} \right) ^{k}$. We define%
\begin{equation*}
\Omega _{k,m}:=\left\{ \left( p,q\right) \in \mathbb{N}^{k}\times \mathbb{N}%
^{k}:p\leq q\leq m\right\} \text{.}
\end{equation*}%
This is a category with objects $\left\{ p\in \mathbb{N}^{k}:p\leq m\right\}
$, range map $r\left( p,q\right) =p$, source map $s\left( p,q\right) =q$,
and degree map $d\left( p,q\right) =q-p$. Then $\left( \Omega
_{k,m},d\right) $ is a $k$-graph.
\end{example}

One way to visualise $k$-graphs is to use coloured graphs. By choosing $k$%
-different colours $c_{1},\ldots ,c_{k}$, we can view paths in $\Lambda
^{e_{i}}$ as edges of colour $c_{i}$. For a $k$-graph $\Lambda $, we call
its corresponding coloured graph the \emph{skeleton }of $\Lambda $. For
further discussion about $k$-graphs and their skeletons, see \cite{HRSW13}.

Let $\Lambda $ be a $k$-graph. For $\lambda ,\mu \in \Lambda $, we say that $%
\tau $ is a \emph{minimal common extension }of $\lambda $ and $\mu $ if
\begin{equation*}
d\left( \tau \right) =d\left( \lambda \right) \vee d\left( \mu \right) \text{%
, }\tau \left( 0,d\left( \lambda \right) \right) =\lambda \text{ and }\tau
\left( 0,d\left( \mu \right) \right) =\mu \text{.}
\end{equation*}%
Let $\operatorname{MCE}\left( \lambda ,\mu \right) $ denote the collection of all
minimal common extensions of $\lambda $ and $\mu $. Then we write%
\begin{equation*}
\Lambda ^{\min }\left( \lambda ,\mu \right) :=\left\{ \left( \rho ,\tau
\right) \in \Lambda \times \Lambda :\lambda \rho =\mu \tau \in \operatorname{MCE}%
\left( \lambda ,\mu \right) \right\} \text{.}
\end{equation*}%
Meanwhile, for $E\subseteq \Lambda $ and $\lambda \in \Lambda $, we write
\begin{equation*}
\operatorname{Ext}\left( \lambda ;E\right) :=\bigcup_{\mu \in E}\left\{ \rho :(\rho
,\tau )\in \Lambda ^{\min }\left( \lambda ,\mu \right) \right\} \text{.}
\end{equation*}%
A set $E\subseteq v\Lambda $ is \emph{exhaustive} if for every $\lambda \in
v\Lambda $, there exists $\mu \in E$ such that $\Lambda ^{\min }\left(
\lambda ,\mu \right) \neq \emptyset $. We define%
\begin{equation*}
\operatorname{FE}\left( \Lambda \right) :=\bigcup_{v\in \Lambda ^{0}}\left\{
E\subseteq \left. v\Lambda \right\backslash \left\{ v\right\} :E\text{ is
finite and exhaustive}\right\} \text{.}
\end{equation*}%
For $E\in \operatorname{FE}\left( \Lambda \right) $, we write $r\left( E\right) $
for the vertex $v$ which satisfies $E\subseteq v\Lambda $.

We say that $\Lambda $ is \emph{finitely aligned} if $\Lambda ^{\min }\left(
\lambda ,\mu \right) $ is finite (possibly empty) for all $\lambda ,\mu \in
\Lambda $. We see that every $1$-graph is finitely aligned. As in \cite[%
Definition 1.4]{KP00}, we say that a $k$-graph $\Lambda $ is \emph{row-finite%
} if $v\Lambda ^{n}$ is finite for every $v\in \Lambda ^{0}$ and $n\in
\mathbb{N}^{k}$. Note that for all $\lambda ,\mu \in \Lambda $, we have $%
\left\vert \Lambda ^{\min }\left( \lambda ,\mu \right) \right\vert
=\left\vert \operatorname{MCE}\left( \lambda ,\mu \right) \right\vert \leq
\left\vert r\left( \lambda \right) \Lambda ^{d\left( \lambda \right) \vee
d\left( \mu \right) }\right\vert $. Hence, every row-finite $k$-graph $%
\Lambda $ is finitely aligned. On the other hand, a finitely aligned $k$%
-graph $\Lambda $ is not necessarily row-finite.

For example, consider the $2$-graph $\Lambda _{1}$ which has skeleton
\begin{equation*}
\begin{tikzpicture} \node[inner sep=1pt] (v) at (0,0) {$\bullet$};
\node[inner sep=1pt] at (0,-0.3) {$v$}; \path[->,every
loop/.style={looseness=20}] (v) edge[in=-45,out=-135,loop, red, very thick]
node[pos=0.5, below, black]{$e$} (v); \path[->,every
loop/.style={looseness=10}] (v) edge[in=-225,out=-315,loop, blue, dashed,
very thick] node[pos=0.5, above, black]{$f_1$} (v); \path[->,every
loop/.style={looseness=30}] (v) edge[in=-212.5,out=-327.5,loop, blue,
dashed, very thick] node[pos=0.5, above, black]{$f_2$} (v); \path[->,every
loop/.style={looseness=75}] (v) edge[in=-200,out=-340,loop, blue, dashed,
very thick] node[pos=0.5, above, black]{$\vdots$} (v); \end{tikzpicture}
\end{equation*}
where $ef_{i}=f_{i}e$ for all positive integers $i$, the solid edge has
degree $\left( 1,0\right) $ and dashed edges have degree $\left( 0,1\right) $%
. It is clearly not row-finite because $|v\Lambda _{1}^{\left( 0,1\right)
}|=\infty $. On the other hand, for $\lambda ,\mu \in \Lambda $, $\left\vert
\Lambda _{1}^{\min }\left( \lambda ,\mu \right) \right\vert $ is either $0$
or $1$, and then $\Lambda _{1}$ is finitely aligned.

Following \cite[Definition 1.4]{KP00}, a $k$-graph $\Lambda $ has \emph{no
sources }if $v\Lambda ^{n}$ is nonempty for every $v\in \Lambda ^{0}$ and $%
n\in \mathbb{N}^{k}$. Meanwhile, recall from \cite[Definition 3.9]{RSY03}
that a $k$-graph $\Lambda $ is \emph{locally convex }if for all $v\Lambda
^{0}$, $1\leq i,j\leq k$ with $i\neq j$, $\lambda \in v\Lambda ^{e_{i}}$ and
$\mu \in v\Lambda ^{e_{j}}$, the sets $s\left( \lambda \right) \Lambda
^{e_{j}}$ and $s\left( \mu \right) \Lambda ^{e_{i}}$ are nonempty.

Consider the $2$-graph $\Lambda _{2}$ with skeleton%
\begin{equation*}
\begin{tikzpicture} \node[inner sep=1pt] (v_1) at (0,0) {$\bullet$};
\node[inner sep=1pt] at (0,-0.3) {$v_1$}; \node[inner sep=1pt] (v_2) at
(0,3) {$\bullet$}; \node[inner sep=1pt] at (-0.3,3.3) {$v_2$}; \node[inner
sep=1pt] (v_3) at (3,0) {$\bullet$}; \node[inner sep=1pt] at (3.3,-0.3)
{$v_3$}; \node[inner sep=1pt] (v_4) at (3,3) {$\bullet$}; \node[inner
sep=1pt] at (3.3,3.3) {$v_4$}; \node[inner sep=1pt] (v_5) at (-3,0)
{$\bullet$}; \node[inner sep=1pt] at (-3.3,0) {$v_5$}; \draw[-latex, red,
very thick] (v_2) edge[out=270, in=90] node[pos=0.5, left, black]{$e_1$}
(v_1); \draw[-latex, blue, dashed, very thick] (v_4) edge[out=180, in=0]
node[pos=0.5, above, black]{$f_1$} (v_2); \draw[-latex, red, very thick]
(v_4) edge[out=270, in=90] node[pos=0.5, right, black]{$e_2$} (v_3);
\draw[-latex, blue, dashed, very thick] (v_3) edge[out=180, in=0]
node[pos=0.5, below, black]{$f_2$} (v_1); \draw[-latex, red, very thick]
(v_5) edge[out=0, in=180] node[pos=0.5, above, black]{$e_3$} (v_1);
\end{tikzpicture}
\end{equation*}
where $e_{1}f_{1}=f_{2}e_{2}$, solid edges have degree $\left( 1,0\right) $
and dashed edges have degree $\left( 0,1\right) $. Since $v_{5}$ does not
receive edges with degree $\left( 0,1\right) $, then $v_{5}$ is a source of $%
\Lambda _{2}$. Furthermore, $\Lambda _{2}$ fails to be locally-convex since $%
e_{3}\in v_{1}\Lambda _{2}^{\left( 1,0\right) },f_{2}\in v_{1}\Lambda
_{2}^{\left( 0,1\right) }$ but $s\left( e_{3}\right) \Lambda _{2}^{\left(
0,1\right) }=\emptyset $. On the other hand, $\Lambda _{2}$ is row-finite
thus $\Lambda _{2}$ is finitely aligned.

Next consider the $2$-graph $\Lambda _{3}$ with skeleton%
\begin{equation*}
\begin{tikzpicture} \node[inner sep=1pt] (v_1) at (0,0) {$\bullet$};
\node[inner sep=1pt] at (-0.3,-0.3) {$v_1$}; \node[inner sep=1pt] (v_2) at
(0,3) {$\bullet$}; \node[inner sep=1pt] at (-0.3,3.3) {$v_2$}; \node[inner
sep=1pt] (v_3) at (3,0) {$\bullet$}; \node[inner sep=1pt] at (3.3,-0.3)
{$v_3$}; \node[inner sep=1pt] (w_1) at (1.5,1.5) {$\bullet$}; \node[inner
sep=1pt] at (1.2,1.2) {$w_1$}; \node[inner sep=1pt] (w_2) at (2.3,2.3)
{$\bullet$}; \node[inner sep=1pt] at (2.0,2.0) {$w_2$}; \node[inner sep=1pt]
at (2.6,2.6) {$\cdot$}; \node[inner sep=1pt] at (2.7,2.7) {$\cdot$};
\node[inner sep=1pt] at (2.8,2.8) {$\cdot$}; \draw[-latex, red, very thick]
(v_2) edge[out=270, in=90] node[pos=0.5, left, black]{$e$} (v_1);
\draw[-latex, blue, dashed, very thick] (v_3) edge[out=180, in=0]
node[pos=0.5, below, black]{$f$} (v_1); \draw[-latex, red, very thick] (w_1)
edge[out=315, in=135] node[pos=0.5, below, black]{$e_1$} (v_3);
\draw[-latex, blue, dashed, very thick] (w_1) edge[out=135, in=315]
node[pos=0.5, below, black]{$f_1$} (v_2); \draw[-latex, blue, dashed, very
thick] (w_2) edge[out=165, in=-15] node[pos=0.5, above, black]{$f_2$} (v_2);
\draw[-latex, red, very thick] (w_2) edge[out=285, in=105] node[pos=0.5,
right, black]{$e_2$} (v_3); \end{tikzpicture}
\end{equation*}
where $ef_{i}=fe_{i}$ for all positive integers $i$, solid edges have degree
$\left( 1,0\right) $ and dashed edges have degree $\left( 0,1\right) $.
Since $\left\vert \Lambda _{3}^{\min }\left( e,f\right) \right\vert =\infty $%
, then $\Lambda _{3}$ is not finitely aligned. Hence, not every $k$-graph is
finitely aligned.

To summarise, finitely aligned $k$-graphs generalise both row-finite $k$%
-graphs with no sources and locally convex row-finite $k$-graphs. However,
this class of $k$-graphs does not cover all $k$-graphs. In this paper, we
focus on finitely aligned $k$-graphs. For other examples and further
discussion, see \cite{KP00,P15,RSY03,RSY04,W11}.

\subsection{Paths and boundary paths.}

Suppose that $\Lambda $ is a finitely aligned $k$-graph. Recall from \cite[%
Definition 3.1]{RSY03} that for $n\in \mathbb{N}^{k}$, we define%
\begin{equation*}
\Lambda ^{\leq n}:=\{\lambda \in \Lambda :d\left( \lambda \right) \leq n%
\text{, and }d\left( \lambda \right) _{i}<n_{i}\text{ implies }s\left(
\lambda \right) \Lambda ^{e_{i}}=\emptyset \}\text{.}
\end{equation*}%
Note that $v\Lambda ^{\leq n}\neq \emptyset $ for all $v\in \Lambda ^{0}$
and $n\in \mathbb{N}^{k}$. This is because $v$ is contained in $v\Lambda
^{\leq n}$ whenever $v\Lambda ^{\leq n}$ has no non-trivial paths of degree
less than or equal to $q$. For further discussion, see \cite[Remark 3.2]%
{RSY03}.

Following \cite[Definition 5.10]{FMY05}, we say that a degree-preserving
functor $x:\Omega _{k,m}\rightarrow \Lambda $ is a \emph{boundary path }of $%
\Lambda $ if for every $n\in \mathbb{N}^{k}$ with $n\leq m $ and for $E\in
x\left( n,n\right) \operatorname{FE}\left( \Lambda \right) $, there exists $\lambda
\in E$ such that $x\left( n,n+d\left( \lambda \right) \right) =\lambda $. We
write $\partial \Lambda $ for the set of all boundary paths. Note that for $%
v\in \Lambda ^{0}$, $v\partial \Lambda $ is nonempty \cite[Lemma 5.15]{FMY05}%
.

\begin{remark}
In the locally convex setting, the set $\Lambda ^{\leq \infty }$ (as defined
in \cite[Definition 3.14]{RSY03}) is referred to as the \textquotedblleft
boundary path space\textquotedblright . Indeed, if $\Lambda $ is row-finite
and locally convex, then $\Lambda ^{\leq \infty }=\partial \Lambda $ \cite[%
Proposition 2.12]{W11}. However, more generally, $\Lambda ^{\leq \infty }
\subseteq \partial \Lambda $ and the two can be different (see \cite[Example
2.11]{W11}).
\end{remark}

Let $x\in \partial \Lambda $. If $n\in \mathbb{N}^{k}$ and $n\leq d\left(
x\right) $, we define $\sigma ^{n}x$ by $\sigma ^{n}x\left( 0,m\right)
=x\left( n,n+m\right) $ for all $m\leq d\left( x\right) -n$, and by \cite[%
Lemma 5.13.(1)]{FMY05}, $\sigma ^{n}x$ also belongs to $\partial \Lambda $.
We also write $x\left( n\right) $ for the vertex $x\left( n,n\right) $. Then
the range of boundary path $x$ is the vertex $r\left( x\right) :=x\left(
0\right) $. For $\lambda \in \Lambda x\left( 0\right) $, we also have $%
\lambda x\in \partial \Lambda $ \cite[Lemma 5.13.(2)]{FMY05}.

\subsection{Graded rings.}

Suppose that $G$ is an additive abelian group. A ring $A$ is $G$\emph{%
-graded }if there are additive subgroups $\left\{ A_{g}:g\in G\right\} $
satisfying:%
\begin{equation*}
A=\bigoplus {}_{g\in G}A_{g}\text{ and for }g,h\in G\text{, }%
A_{g}A_{h}\subseteq A_{g+h}\text{.}
\end{equation*}%
If $A$ and $B$ are $G$-graded rings, a homomorphism $\pi :A\rightarrow B$ is
$G$\emph{-graded}\ if $\pi \left( A_{g}\right) \subseteq B_{g}$ for $g\in G$.

Let $A$ be a $G$-graded ring. We say an ideal $I$ of $A$ is a $G$\emph{%
-graded ideal} if $\left\{ I\cap A_{g}:g\in G\right\} $ is a grading of $I$.

\section{Kumjian-Pask $\Lambda $-families}

Suppose that $\Lambda $ is a finitely aligned $k$-graph and $R$ is a
commutative ring with identity $1$. For $\lambda \in \Lambda $, we call $%
\lambda ^{\ast }$ a \emph{ghost path }($\lambda ^{\ast }$ is a formal
symbol) and we define
\begin{equation*}
G\left( \Lambda \right) :=\left\{ \lambda ^{\ast }:\lambda \in \Lambda
\right\} \text{.}
\end{equation*}%
For $v\in \Lambda ^{0}$, we define $v^{\ast }:=v$. We also extend $r$ and $s$
to be defined on $G\left( \Lambda \right) $ by%
\begin{equation*}
r\left( \lambda ^{\ast }\right) =s\left( \lambda \right) \text{ and }s\left(
\lambda ^{\ast }\right) =r\left( \lambda \right) \text{.}
\end{equation*}%
We then define composition on $G\left( \Lambda \right) $ by setting $\lambda
^{\ast }\mu ^{\ast }=\left( \mu \lambda \right) ^{\ast }$ for $\lambda ,\mu
\in \Lambda $; and write $G\left( \Lambda ^{\neq 0}\right) $ the set of
ghost paths that are not vertices. Note that the factorisation property of $%
\Lambda $ induces a similar factorisation property on $G\left( \Lambda
\right) $.

\begin{definition}
\label{KP-family}A \emph{Kumjian-Pask }$\Lambda $\emph{-family} $\left\{
S_{\lambda },S_{\mu ^{\ast }}:\lambda ,u\in \Lambda \right\} $ in an $R$%
-algebra $A$ consists of $S:\Lambda \cup G\left( \Lambda ^{\neq 0}\right)
\rightarrow A$ such that:

\begin{enumerate}
\item[(KP1)] $\left\{ S_{v}:v\in \Lambda ^{0}\right\} $ is a collection of
mutually orthogonal idempotents;

\item[(KP2)] for $\lambda ,\mu \in \Lambda $ with $s\left( \lambda \right)
=r\left( \mu \right) $, we have $S_{\lambda }S_{\mu }=S_{\lambda \mu }$ and $%
S_{\mu ^{\ast }}S_{\lambda ^{\ast }}=S_{\left( \lambda \mu \right) ^{\ast }}$%
;

\item[(KP3)] $S_{\lambda ^{\ast }}S_{\mu }=\sum_{(\rho ,\tau )\in \Lambda
^{\min }\left( \lambda ,\mu \right) }S_{\rho }S_{\tau ^{\ast }}$ for all $%
\lambda ,\mu \in \Lambda $; and

\item[(KP4)] $\prod_{\lambda \in E}\left( S_{r\left( E\right) }-S_{\lambda
}S_{\lambda ^{\ast }}\right) =0$ for all $E\in \operatorname{FE}\left( \Lambda
\right) $.
\end{enumerate}
\end{definition}

\begin{remark}
\label{KP-family-remark}A number of aspects of these relations are worth
commenting on:

\begin{enumerate}
\item[(i)] In previous references about Leavitt path algebras and
Kumjian-Pask algebras, people usually distinguish the vertex idempotents as
\textquotedblleft $P_{v}$\textquotedblright\ (for example, see \cite%
{A15,AA05,AA08,AMP07,ACaHR13,CFaH14,T07,T11}). We do not follow this
convention because we do not want to make additional unnecessary cases in
each proof.

\item[(ii)] (KP2) in \cite{ACaHR13,CFaH14} has more relations to check.
However, using our notational convention, those relations can be simplified
and are equivalent to our (KP2).

\item[(iii)] The restriction to finitely aligned $k$-graphs is necessary for
the sum in (KP3) to be make sense (see \cite{RS05}).

\item[(iv)] In (KP3), we interpret the empty sum as $0$, so $S_{\lambda
^{\ast }}S_{\mu }=0$ whenever $\Lambda ^{\min }\left( \lambda ,\mu \right)
=\emptyset $. We also have $S_{\lambda ^{\ast }}S_{\lambda }=S_{s\left(
\lambda \right) }$.

\item[(v)] (KP3-4) have been changed from those in \cite[Definition 3.1]%
{ACaHR13} and \cite[Definition 3.1]{CFaH14}. We do this because we need to
adjust the relations to deal with situation where $k$-graph is not locally
convex. For further discussion, see Appendix A of \cite{RSY04}.
\end{enumerate}
\end{remark}

The following lemma establishes some useful properties of a family
satisfying (KP1-3).

\begin{proposition}
\label{properties-of-KP}Let $\Lambda $ be a finitely aligned $k$-graph, $R$
be a commutative ring with $1$, and $\left\{ S_{\lambda },S_{\mu ^{\ast
}}:\lambda ,u\in \Lambda \right\} $ be a family satisfying (KP1-3) in an $R$%
-algebra $A$. Then

\begin{enumerate}
\item[(a)] $S_{\lambda }S_{\lambda ^{\ast }}S_{\mu }S_{\mu ^{\ast
}}=\sum_{\lambda \rho \in \operatorname{MCE}\left( \lambda ,\mu \right) }S_{\lambda
\rho }S_{\left( \lambda \rho \right) ^{\ast }}$ for $\lambda ,\mu \in
\Lambda $; and $\left\{ S_{\lambda }S_{\lambda ^{\ast }}:\lambda \in \Lambda
\right\} $ is a commuting family.

\item[(b)] The subalgebra generated by $\left\{ S_{\lambda },S_{\mu ^{\ast
}}:\lambda ,u\in \Lambda \right\} $ is
\begin{equation*}
\operatorname{span}_{R}\{S_{\lambda }S_{\mu ^{\ast }}:\lambda ,u\in \Lambda ,s\left(
\lambda \right) =s\left( \mu \right) \}\text{.}
\end{equation*}

\item[(c)] For $n\in \mathbb{N}^{k}$ and $\lambda ,\mu \in \Lambda ^{\leq n}$%
, we have $S_{\lambda ^{\ast }}S_{\mu }=\delta _{\lambda ,\mu }S_{s\left(
\lambda \right) }$.

\item[(d)] Suppose that $rS_{v}\neq 0$ for all $r\in \left.
R\right\backslash \left\{ 0\right\} $, $v\in \Lambda ^{0}$ and that $\lambda
,\mu \in \Lambda $ with $s\left( \lambda \right) =s\left( \mu \right) $. If $%
r\in \left. R\right\backslash \left\{ 0\right\} $ and $G\subseteq s\left(
\lambda \right) \Lambda $ is finite non-exhaustive, then%
\begin{equation*}
rS_{\lambda }\neq 0\text{ and }rS_{\lambda }\big(\prod_{\nu \in G}\left(
S_{s\left( \lambda \right) }-S_{\nu }S_{\nu ^{\ast }}\right) \big)S_{\mu
^{\ast }}\neq 0\text{.}
\end{equation*}
\end{enumerate}
\end{proposition}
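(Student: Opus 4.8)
The plan is to derive (a)--(c) directly from (KP1--3), reserving the real work for (d). For (a), I would apply (KP3) to the interior product in $S_\lambda S_{\lambda^*} S_\mu S_{\mu^*}$ and then collapse the outer factors with (KP2): for each $(\rho,\tau) \in \Lambda^{\min}(\lambda,\mu)$ we have $S_\lambda S_\rho = S_{\lambda\rho}$ and $S_{\tau^*} S_{\mu^*} = S_{(\mu\tau)^*} = S_{(\lambda\rho)^*}$ because $\lambda\rho = \mu\tau$. Re-indexing along the bijection $(\rho,\tau) \mapsto \lambda\rho$ from $\Lambda^{\min}(\lambda,\mu)$ onto $\MCE(\lambda,\mu)$ yields the stated formula, and commutativity of $\{S_\lambda S_{\lambda^*}\}$ is then automatic since the right-hand side is unchanged under swapping $\lambda$ and $\mu$ (as $\MCE(\lambda,\mu) = \MCE(\mu,\lambda)$). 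For (b), I would first place every generator in the candidate span via $S_\lambda = S_\lambda S_{s(\lambda)}$ and $S_{\mu^*} = S_{s(\mu)} S_{\mu^*}$, and then verify closure under multiplication: applying (KP3) to $S_{\mu^*} S_\alpha$ and (KP2) to the outer factors rewrites $(S_\lambda S_{\mu^*})(S_\alpha S_{\beta^*})$ as a finite $R$-combination of terms $S_{\lambda\rho} S_{(\beta\tau)^*}$ with $(\rho,\tau) \in \Lambda^{\min}(\mu,\alpha)$, each satisfying $s(\lambda\rho) = s(\beta\tau)$ and hence lying in the span. A span that contains the generators and is closed under multiplication is exactly the generated subalgebra.

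For (c), I would expand $S_{\lambda^*} S_\mu$ by (KP3) and show $\Lambda^{\min}(\lambda,\mu) = \emptyset$ whenever $\lambda \neq \mu$ (the case $\lambda = \mu$ being Remark \ref{KP-family-remark}(iv)). The crux is the defining property of $\Lambda^{\leq n}$: if some $(\rho,\tau) \in \Lambda^{\min}(\lambda,\mu)$ existed and $d(\lambda) \neq d(\mu)$, then in a coordinate $i$ where the degrees differ one of $\rho,\tau$ would have a nontrivial $e_i$-component, exhibiting an edge out of $s(\lambda)$ or $s(\mu)$ in direction $e_i$; but that coordinate satisfies $d(\lambda)_i < n_i$ (resp.\ $d(\mu)_i < n_i$), so membership in $\Lambda^{\leq n}$ forces the corresponding $s(\cdot)\Lambda^{e_i}$ to be empty, a contradiction. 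Hence $d(\lambda) = d(\mu)$, and minimality of the common extension then forces $\lambda = \mu$.

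The content of the proposition is (d). The first claim is immediate: if $rS_\lambda = 0$ then left-multiplying by $S_{\lambda^*}$ and using $S_{\lambda^*} S_\lambda = S_{s(\lambda)}$ gives $rS_{s(\lambda)} = 0$, against the hypothesis. For the second claim, set $v = s(\lambda) = s(\mu)$ and $P = \prod_{\nu \in G}(S_v - S_\nu S_{\nu^*})$, which is well defined by the commutativity in (a). I would run a sandwich argument: multiplying on the left by $S_{\lambda^*}$ and on the right by $S_\mu$, and using $S_{\lambda^*} S_\lambda = S_{\mu^*} S_\mu = S_v$ together with $S_v P S_v = P$ (as $S_v$ fixes each factor), reduces the whole expression to $rP$, so it suffices to prove $rP \neq 0$. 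Because $G$ is non-exhaustive, there is $\eta \in v\Lambda$ with $\Lambda^{\min}(\eta,\nu) = \emptyset$ for all $\nu \in G$; by (KP3) this gives $S_{\eta^*} S_\nu = 0$, whence $S_{\eta^*}(S_v - S_\nu S_{\nu^*}) = S_{\eta^*}$ for each factor, and peeling the factors off from the left one at a time telescopes to $S_{\eta^*} P = S_{\eta^*}$. Right-multiplying by $S_\eta$ and using $S_{\eta^*} S_\eta = S_{s(\eta)}$ then yields $S_{\eta^*}(rP)S_\eta = rS_{s(\eta)} \neq 0$, so $rP \neq 0$ and the original element is nonzero. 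I expect the main obstacle to be (d): specifically, recognising that non-exhaustiveness must be used to produce the witness $\eta$ whose ghost path annihilates every $S_\nu$, since it is exactly this vanishing that makes the telescoping $S_{\eta^*} P = S_{\eta^*}$ work and reduces the problem to the hypothesis $rS_{s(\eta)} \neq 0$. Everything in (a)--(c) is routine once the correct instance of (KP3) is inserted.
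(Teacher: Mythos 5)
Your proposal is correct and follows essentially the same route as the paper's proof: (a)--(c) use the identical applications of (KP1--3) and the same $\Lambda ^{\leq n}$ degree/coordinate argument, and (d) rests on the same key idea of extracting from non-exhaustiveness a witness path whose ghost path annihilates every $S_{\nu }$ via (KP3), thereby reducing everything to the hypothesis $rS_{v}\neq 0$. The only cosmetic difference is in (d), where the paper obtains the contradiction in one step by right-multiplying the assumed-zero element by $S_{\mu \gamma }$ to get $rS_{\lambda \gamma }=0$, whereas you sandwich twice (first by $S_{\lambda ^{\ast }}$, $S_{\mu }$ to reduce to $rP\neq 0$, then by $S_{\eta ^{\ast }}$, $S_{\eta }$); the computations involved are the same.
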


\begin{proof}
To show (a), we take $\lambda ,\mu \in \Lambda $ and then%
\begin{align*}
S_{\lambda }S_{\lambda ^{\ast }}S_{\mu }S_{\mu ^{\ast }}& =S_{\lambda }\big(%
\sum_{(\rho ,\tau )\in \Lambda ^{\min }\left( \lambda ,\mu \right) }S_{\rho
}S_{\tau ^{\ast }}\big)S_{\mu ^{\ast }}=\sum_{(\rho ,\tau )\in \Lambda
^{\min }\left( \lambda ,\mu \right) }S_{\lambda \rho }S_{\left( \mu \tau
\right) ^{\ast }} \\
& =\sum_{(\rho ,\tau )\in \Lambda ^{\min }\left( \lambda ,\mu \right)
}S_{\lambda \rho }S_{\left( \lambda \rho \right) ^{\ast }}=\sum_{\lambda
\rho \in \operatorname{MCE}\left( \lambda ,\mu \right) }S_{\lambda \rho }S_{\left(
\lambda \rho \right) ^{\ast }}\text{.}
\end{align*}%
Furthermore,
\begin{equation*}
S_{\lambda }S_{\lambda ^{\ast }}S_{\mu }S_{\mu ^{\ast }}=\sum_{\lambda \rho
\in \operatorname{MCE}\left( \lambda ,\mu \right) }S_{\lambda \rho }S_{\left(
\lambda \rho \right) ^{\ast }}=\sum_{\mu \tau \in \operatorname{MCE}\left( \lambda
,\mu \right) }S_{\mu \tau }S_{\left( \mu \tau \right) ^{\ast }}=S_{\mu
}S_{\mu ^{\ast }}S_{\lambda }S_{\lambda ^{\ast }}\text{,}
\end{equation*}
as required.

Next we show (b). For $\lambda ,u\in \Lambda $, we have $S_{\lambda }S_{\mu
^{\ast }}=S_{\lambda }S_{s\left( \lambda \right) }S_{s\left( \mu \right)
}S_{\mu ^{\ast }}$ by (KP2). Then by (KP1), $S_{\lambda }S_{\mu ^{\ast
}}\neq 0$ implies $s\left( \lambda \right) =s\left( \mu \right) $.
Therefore, the result follows from part (a), (KP2) and (KP3).

To show (c), we take $\lambda ,\mu \in \Lambda ^{\leq n}$. Suppose that $%
S_{\lambda ^{\ast }}S_{\mu }\neq 0$. By (KP3), there exists $(\rho ,\tau
)\in \Lambda ^{\min }\left( \lambda ,\mu \right) $ such that $\lambda \rho
=\mu \tau $ and $d\left( \lambda \rho \right) \leq n$. Since $\lambda ,\mu
\in \Lambda ^{\leq n}$, then $\rho =s\left( \lambda \right) =\tau $ and
hence $\lambda =\mu $.

Finally, we show (d). Take $r\in \left. R\right\backslash \left\{ 0\right\} $
and $\lambda \in \Lambda $. Suppose for contradiction that $rS_{\lambda }=0$%
. Then%
\begin{equation*}
0=S_{\lambda ^{\ast }}\left( rS_{\lambda }\right) =rS_{\lambda ^{\ast
}}S_{\lambda }=rS_{s\left( \lambda \right) }\text{,}
\end{equation*}%
which contradicts with $rS_{v}\neq 0$ for all $r\in \left. R\right\backslash
\left\{ 0\right\} $ and $v\in \Lambda ^{0}$. Hence, $rS_{\lambda }\neq 0$.

Now take $r\in \left. R\right\backslash \left\{ 0\right\} $, $\lambda ,\mu
\in \Lambda $ with $s\left( \lambda \right) =s\left( \mu \right) $ and
finite non-exhaustive $G\subseteq s\left( \lambda \right) \Lambda $. Suppose
for contradiction that%
\begin{equation*}
rS_{\lambda }\big(\prod_{\nu \in G}\left( S_{s\left( \lambda \right)
}-S_{\nu }S_{\nu ^{\ast }}\right) \big)S_{\mu ^{\ast }}=0\text{.}
\end{equation*}%
Since $G$ is non-exhaustive, then there exists $\gamma \in s\left( \lambda
\right) \Lambda $ such that $\operatorname{Ext}\left( \gamma ;G\right) =\emptyset $.
Hence $\Lambda ^{\min }\left( \nu ,\gamma \right) =\emptyset $ for every $%
\nu \in G$, and then by (KP3), $S_{\nu ^{\ast }}S_{\gamma }=0$ for $\nu \in
G $. Therefore,%
\begin{align*}
0& =\big(rS_{\lambda }\big(\prod_{\nu \in G}\left( S_{s\left( \lambda
\right) }-S_{\nu }S_{\nu ^{\ast }}\right) \big)S_{\mu ^{\ast }}\big)S_{\mu
\gamma } \\
& =rS_{\lambda }\big(\prod_{\nu \in G}\left( S_{s\left( \lambda \right)
}-S_{\nu }S_{\nu ^{\ast }}\right) \big)S_{\gamma } \\
& =rS_{\lambda }S_{\gamma }=rS_{\lambda \gamma }\text{,}
\end{align*}%
which contradicts with $rS_{\lambda \gamma }\neq 0$. Hence, $rS_{\lambda }%
\big(\prod_{\nu \in G}\left( S_{s\left( \lambda \right) }-S_{\nu }S_{\nu
^{\ast }}\right) \big)S_{\mu ^{\ast }}\neq 0$.
\end{proof}

\begin{remark}
\label{properties-of-KP-additional}For $n\in \mathbb{N}^{k}$, we have $%
\Lambda ^{n}\subseteq \Lambda ^{\leq n}$. Hence, Proposition \ref%
{properties-of-KP}.(c) also implies that for $n\in \mathbb{N}^{k}$ and $%
\lambda ,\mu \in \Lambda ^{n}$, we have $S_{\lambda ^{\ast }}S_{\mu }=\delta
_{\lambda ,\mu }S_{s\left( \lambda \right) }$.
\end{remark}

\begin{remark}
\label{properties-of-KP-additional-2}Suppose that $rS_{v}\neq 0$ for all $%
r\in \left. R\right\backslash \left\{ 0\right\} $, $v\in \Lambda ^{0}$ and
that $\lambda ,\mu \in \Lambda $ with $s\left( \lambda \right) =s\left( \mu
\right) $. Then the contrapositive of Proposition \ref{properties-of-KP}.(d)
says: if $r\in R$ and $G\subseteq s\left( \lambda \right) \Lambda $ is
finite such that $rS_{\lambda }\big(\prod_{\nu \in G}\left( S_{s\left(
\lambda \right) }-S_{\nu }S_{\nu ^{\ast }}\right) \big)S_{\mu ^{\ast }}=0$,
then we have either $r=0$ or $G$ is exhaustive.
\end{remark}

Now we give an example of a Kumjian-Pask $\Lambda $-family in a particular
algebra of endomorphism.

\begin{proposition}
\label{the-boundary-path-representation}Let $\Lambda $ be a finitely aligned
$k$-graph and $R$ be a commutative ring with $1$. Let $\mathbb{F}_{R}\left(
\partial \Lambda \right) $ be the free module with basis the boundary path
space. Then for every $v\in \Lambda ^{0}$ and $\lambda ,\mu \in \left.
\Lambda \right\backslash \Lambda ^{0}$, there exist endomorphisms $%
S_{v},S_{\lambda },S_{\mu ^{\ast }}:\mathbb{F}_{R}\left( \partial \Lambda
\right) \rightarrow \mathbb{F}_{R}\left( \partial \Lambda \right) $ such
that for $x\in \partial \Lambda $,
\begin{align*}
S_{v}\left( x\right) & =%
\begin{cases}
x & \text{if }r\left( x\right) =v\text{;} \\
0 & \text{otherwise,}%
\end{cases}
\\
S_{\lambda }\left( x\right) & =%
\begin{cases}
\lambda x & \text{if }s\left( \lambda \right) =r\left( x\right) \text{;} \\
0 & \text{otherwise,}%
\end{cases}
\\
S_{\mu ^{\ast }}\left( x\right) & =%
\begin{cases}
\sigma ^{d\left( \mu \right) }x & \text{if }x\left( 0,d\left( \mu \right)
\right) =\mu \text{;} \\
0 & \text{otherwise.}%
\end{cases}%
\end{align*}%
Furthermore, $\left\{ S_{\lambda },S_{\mu ^{\ast }}:\lambda ,u\in \Lambda
\right\} $ is a Kumjian-Pask $\Lambda $-family in the $R$-algebra $\operatorname{End}%
\left( \mathbb{F}_{R}\left( \partial \Lambda \right) \right) $ with $%
rS_{v}\neq 0$ for all $r\in \left. R\right\backslash \left\{ 0\right\} $ and
$v\in \Lambda ^{0}$.
\end{proposition}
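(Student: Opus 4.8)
The plan is to do everything on basis vectors. Since $\mathbb{F}_{R}(\partial\Lambda)$ is the free $R$-module on $\partial\Lambda$, each of the three prescriptions defines a unique $R$-linear endomorphism once I check that the listed images lie in $\partial\Lambda\cup\{0\}$. For $S_v$ this is immediate; for $S_\lambda$ I would invoke \cite[Lemma 5.13.(2)]{FMY05} (so $\lambda x\in\partial\Lambda$ when $s(\lambda)=r(x)$), and for $S_{\mu^\ast}$ I would invoke \cite[Lemma 5.13.(1)]{FMY05} (so $\sigma^{d(\mu)}x\in\partial\Lambda$ when $x(0,d(\mu))=\mu$). Observe also that the $S_v$ formula is exactly the degree-zero case $\lambda=v$ of both the $S_\lambda$ and the $S_{\mu^\ast}$ formula (since $\sigma^{0}x=x$ and $vx=x$), so vertices and non-vertices can be treated uniformly. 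With the endomorphisms in hand, I verify (KP1)--(KP4) by computing both sides on an arbitrary $x\in\partial\Lambda$.

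Relations (KP1) and (KP2) are routine. The formula for $S_v$ exhibits it as the diagonal projection onto $v\partial\Lambda$, so $S_v^2=S_v$ and $S_vS_w=0$ for $v\neq w$, giving (KP1). For (KP2), unwinding the definitions gives $S_\lambda S_\mu(x)=\lambda\mu x$ exactly when $s(\mu)=r(x)$ (using $s(\lambda)=r(\mu)$), which is precisely $S_{\lambda\mu}(x)$; dually $S_{\mu^\ast}S_{\lambda^\ast}(x)=\sigma^{d(\mu)}\sigma^{d(\lambda)}x=\sigma^{d(\lambda\mu)}x$ exactly when $x$ begins with $\lambda\mu$, using $d(\lambda)+d(\mu)=d(\lambda\mu)$ and the factorisation property.

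The crux is (KP3). Evaluating the left-hand side, $S_{\lambda^\ast}S_\mu(x)\neq 0$ precisely when $r(x)=s(\mu)$ and the boundary path $\mu x$ begins with $\lambda$, in which case the value is $\sigma^{d(\lambda)}(\mu x)$. For a pair $(\rho,\tau)\in\Lambda^{\min}(\lambda,\mu)$, the summand $S_\rho S_{\tau^\ast}(x)\neq 0$ precisely when $x$ begins with $\tau$, in which case (as $s(\rho)=s(\lambda\rho)=s(\mu\tau)=s(\tau)$) its value is $\rho\,\sigma^{d(\tau)}x$. The key observation is that whenever $\mu x$ begins with $\lambda$ one has $d(\mu x)\geq d(\lambda)\vee d(\mu)=:p$ (since $d(\mu x)$ dominates both $d(\lambda)$ and $d(\mu)$), so $\zeta:=(\mu x)(0,p)$ is a common extension of $\lambda$ and $\mu$ of degree exactly $p$, hence $\zeta\in\MCE(\lambda,\mu)$; writing $\zeta=\lambda\rho=\mu\tau$ produces a pair $(\rho,\tau)\in\Lambda^{\min}(\lambda,\mu)$ with $x$ beginning with $\tau$, and since $\mu x=\lambda\rho\,\sigma^{p}(\mu x)$ its summand $\rho\,\sigma^{d(\tau)}x=\rho\,\sigma^{p}(\mu x)$ equals $\sigma^{d(\lambda)}(\mu x)$. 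Conversely, if $x$ begins with $\tau$ for some $(\rho,\tau)$, then $\mu x=\mu\tau\,\sigma^{d(\tau)}x=\lambda\rho\,\sigma^{d(\tau)}x$ begins with $\lambda$, so the left-hand side is nonzero. Finally, every pair in $\Lambda^{\min}(\lambda,\mu)$ has $d(\tau)=p-d(\mu)$, so at most one $\tau$ can be an initial segment of $x$; thus exactly one summand survives and matches the left-hand side. The bookkeeping in this matching---the degree inequality, the uniqueness of the surviving pair, and the cancellation $\lambda\rho=\mu\tau$---is the main obstacle.

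For (KP4), note that $S_\lambda S_{\lambda^\ast}(x)=x$ when $x(0,d(\lambda))=\lambda$ and $0$ otherwise, so $S_\lambda S_{\lambda^\ast}$ is the diagonal projection onto boundary paths beginning with $\lambda$. Hence for $E\in\FE(\Lambda)$ each factor $S_{r(E)}-S_\lambda S_{\lambda^\ast}$ is the diagonal projection onto $\{x\in\partial\Lambda:r(x)=r(E)\text{ and }x(0,d(\lambda))\neq\lambda\}$; being diagonal these factors commute, and their product $\prod_{\lambda\in E}(S_{r(E)}-S_\lambda S_{\lambda^\ast})$ is the diagonal projection onto the intersection $\{x\in\partial\Lambda:r(x)=r(E)\text{ and }x(0,d(\lambda))\neq\lambda\text{ for all }\lambda\in E\}$. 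Applying the defining property of a boundary path at $n=0$, with the finite exhaustive set $E\in x(0,0)\FE(\Lambda)$, shows this intersection is empty, so the product is $0$. Lastly, to see $rS_v\neq 0$ for $r\neq 0$, I pick $x\in v\partial\Lambda$, which is nonempty by \cite[Lemma 5.15]{FMY05}; then $rS_v(x)=rx\neq 0$ since $x$ is a basis vector of the free module $\mathbb{F}_{R}(\partial\Lambda)$.
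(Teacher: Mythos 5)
Your proposal is correct and follows essentially the same route as the paper's proof: define the operators on basis vectors via the universal property of the free module, then verify (KP1)--(KP4) pointwise on $\partial\Lambda$, with the same key step in (KP3) of extracting the unique pair $(\rho,\tau)\in\Lambda^{\min}(\lambda,\mu)$ from the initial segment $(\mu x)(0,d(\lambda)\vee d(\mu))\in\operatorname{MCE}(\lambda,\mu)$, and the same use of the boundary-path property for (KP4). Your observation that all the factors in (KP4) are commuting diagonal projections is a slightly cleaner way to justify that the product vanishes than the paper's terse statement, but it is a presentational refinement rather than a different argument.
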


\begin{proof}
Take $v\in \Lambda ^{0}$ and $\lambda ,\mu \in \left. \Lambda
\right\backslash \Lambda ^{0}$. First note that for $x\in \partial \Lambda $
and $m\leq d\left( x\right) $, we have $\sigma ^{m}x\in \partial \Lambda $.
Now define functions $f_{v}$, $f_{\lambda }$, and $f_{\mu ^{\ast }}:\partial
\Lambda \rightarrow \mathbb{F}_{R}\left( \partial \Lambda \right) $ by%
\begin{align*}
f_{v}\left( x\right) & =%
\begin{cases}
x & \text{if }r\left( x\right) =v\text{;} \\
0 & \text{otherwise,}%
\end{cases}
\\
f_{\lambda }\left( x\right) & =%
\begin{cases}
\lambda x & \text{if }s\left( \lambda \right) =r\left( x\right) \text{;} \\
0 & \text{otherwise,}%
\end{cases}
\\
f_{\mu ^{\ast }}\left( x\right) & =%
\begin{cases}
\sigma ^{d\left( \mu \right) }x & \text{if }x\left( 0,d\left( \mu \right)
\right) =\mu \text{;} \\
0 & \text{otherwise.}%
\end{cases}%
\end{align*}%
The universal property of free modules gives nonzero endomorphisms
\begin{equation*}
S_{v},S_{\lambda },S_{\mu ^{\ast }}:\mathbb{F}_{R}\left( \partial \Lambda
\right) \rightarrow \mathbb{F}_{R}\left( \partial \Lambda \right)
\end{equation*}
extending $f_{v}$, $f_{\lambda }$, and $f_{\mu ^{\ast }}$, as needed.

Now we claim that $\left\{ S_{\lambda },S_{\mu ^{\ast }}:\lambda ,u\in
\Lambda \right\} $ is a Kumjian-Pask $\Lambda $-family. To see (KP1), take $%
v\in \Lambda ^{0}$ and $x\in \partial \Lambda $. Then we have $%
S_{v}^{2}\left( x\right) =x=S_{v}\left( x\right) $ if $r\left( x\right) =v$,
and $S_{v}^{2}\left( x\right) =0=S_{v}\left( x\right) $ otherwise. Hence $%
S_{v}^{2}=S_{v}$. Now take $v,w\in \Lambda ^{0}$ with $v\neq w$ and $x\in
\partial \Lambda $. Since $x\in w\partial \Lambda $ implies $x\notin
v\partial \Lambda $, we have $S_{v}S_{w}\left( x\right) =0$ for $x\in
\partial \Lambda $ and $S_{v}S_{w}=0$.

Next we show (KP2). Take $\lambda ,\mu \in \Lambda $ with $s\left( \lambda
\right) =r\left( \mu \right) $. Then for $x\in s\left( \mu \right) \partial
\Lambda $, we have $\mu x\in s\left( \lambda \right) \partial \Lambda $.
Then $S_{\lambda }S_{\mu }\left( x\right) =\lambda \mu x=S_{\lambda \mu
}\left( x\right) $ if $x\in s\left( \mu \right) \partial \Lambda $, and $%
S_{\lambda }S_{\mu }\left( x\right) =0=S_{\lambda \mu }\left( x\right) $
otherwise. Hence $S_{\lambda }S_{\mu }=S_{\lambda \mu }$. Meanwhile, for $%
x\in r\left( \lambda \right) \partial \Lambda $ with $x\left( 0,d\left(
\lambda \mu \right) \right) =\lambda \mu $, we have $d\left( \lambda \mu
\right) \leq d\left( x\right) $ and $\sigma ^{d\left( \lambda \mu \right)
}x\in s\left( \mu \right) \partial \Lambda $. Furthermore, $x\left(
0,d\left( \lambda \mu \right) \right) =\lambda \mu $, implies $x\left(
0,d\left( \lambda \right) \right) =\lambda $ and then we have $d\left(
\lambda \right) \leq d\left( x\right) $ and $\sigma ^{d\left( \lambda
\right) }x\in s\left( \lambda \right) \partial \Lambda $. Hence,
\begin{equation*}
S_{\mu ^{\ast }}S_{\lambda ^{\ast }}\left( x\right) =S_{\mu ^{\ast }}\sigma
^{d\left( \lambda \right) }x=\sigma ^{d\left( \lambda \right) +d\left( \mu
\right) }x=\sigma ^{d\left( \lambda \mu \right) }x=S_{\left( \lambda \mu
\right) ^{\ast }}\left( x\right)
\end{equation*}%
if $x\left( 0,d\left( \lambda \mu \right) \right) =\lambda \mu $, and $%
S_{\mu ^{\ast }}S_{\lambda ^{\ast }}\left( x\right) =0=S_{\left( \lambda \mu
\right) ^{\ast }}\left( x\right) $ otherwise. Therefore, $S_{\mu ^{\ast
}}S_{\lambda ^{\ast }}=S_{\left( \lambda \mu \right) ^{\ast }}$.

Now we show (KP3). Take $\lambda ,\mu \in \Lambda $. If $r\left( \lambda
\right) \neq r\left( \mu \right) $, then $S_{\lambda ^{\ast }}S_{\mu }=0$
and $\Lambda ^{\min }\left( \lambda ,\mu \right) =\emptyset $, as required.
Suppose $r\left( \lambda \right) =r\left( \mu \right) $. We have%
\begin{equation*}
S_{\lambda ^{\ast }}S_{\mu }\left( x\right) =%
\begin{cases}
\left( \mu x\right) \left( d\left( \lambda \right) ,d\left( \mu x\right)
\right) & \text{if }x\in s\left( \mu \right) \partial \Lambda \text{ and }%
\left( \mu x\right) \left( 0,d\left( \lambda \right) \right) =\lambda \text{;%
} \\
0 & \text{otherwise.}%
\end{cases}%
\end{equation*}%
Take $x\in s\left( \mu \right) \partial \Lambda $. Note that $s\left( \mu
\right) =r\left( \tau \right) $ for $(\rho ,\tau )\in \Lambda ^{\min }\left(
\lambda ,\mu \right) $. First suppose $\left( \mu x\right) \left( 0,d\left(
\lambda \right) \right) \neq \lambda $. Then for $(\rho ,\tau )\in \Lambda
^{\min }\left( \lambda ,\mu \right) $,%
\begin{equation*}
\left( \mu x\right) \left( 0,d\left( \lambda \rho \right) \right) \neq
\lambda \rho \text{ and }\left( \mu x\right) \left( 0,d\left( \mu \tau
\right) \right) \neq \mu \tau \text{.}
\end{equation*}%
Hence $x\left( 0,d\left( \tau \right) \right) \neq \tau $ and $S_{\rho
}S_{\tau ^{\ast }}\left( x\right) =S_{\rho }\left( 0\right) =0$. Therefore
\begin{equation*}
\sum_{(\rho ,\tau )\in \Lambda ^{\min }\left( \lambda ,\mu \right) }S_{\rho
}S_{\tau ^{\ast }}\left( x\right) =0.
\end{equation*}%
Next suppose $\left( \mu x\right) \left( 0,d\left( \lambda \right) \right)
=\lambda $. Since $\left( \mu x\right) \left( 0,d\left( \lambda \right)
\right) =\lambda $ and $\left( \mu x\right) \left( 0,d\left( \mu \right)
\right) =\mu $, there is $\tau \in s\left( \mu \right) \Lambda $ such that $%
(\rho ,\tau )\in \Lambda ^{\min }\left( \lambda ,\mu \right) $ and $\left(
\mu x\right) \left( 0,d\left( \mu \tau \right) \right) =\mu \tau $.
Therefore $x\left( 0,d\left( \tau \right) \right) =\tau $. Note that this $%
\tau $ is unique by the factorisation property. Hence for $(\rho ^{\prime
},\tau ^{\prime })\in \Lambda ^{\min }\left( \lambda ,\mu \right) $ such
that $(\rho ^{\prime },\tau ^{\prime })\neq (\rho ,\tau )$, we have $S_{\rho
^{\prime }}S_{\tau ^{\prime \ast }}\left( x\right) =0$. Also $x\left(
0,d\left( \tau \right) \right) =\tau $, thus%
\begin{align*}
S_{\rho }S_{\tau ^{\ast }}\left( x\right) & =S_{\rho }\left( x\left( d\left(
\tau \right) ,d\left( x\right) \right) \right) =\rho \left[ x\left( d\left(
\tau \right) ,d\left( x\right) \right) \right] \\
& =\rho \left[ \left( \mu x\right) \left( d\left( \mu \tau \right) ,d\left(
\mu x\right) \right) \right] \\
& =\rho \left[ \left( \mu x\right) \left( d\left( \lambda \rho \right)
,d\left( \mu x\right) \right) \right] \text{ (since }\mu \tau =\lambda \rho
\text{)} \\
& =\left( \mu x\right) \left( d\left( \lambda \right) ,d\left( \mu x\right)
\right)
\end{align*}%
and%
\begin{equation*}
\sum_{(\rho ^{\prime },\tau ^{\prime })\in \Lambda ^{\min }\left( \lambda
,\mu \right) }S_{\rho }S_{\tau ^{\ast }}\left( x\right) =S_{\rho }S_{\tau
^{\ast }}\left( x\right) =\left( \mu x\right) \left( d\left( \lambda \right)
,d\left( \mu x\right) \right) =S_{\lambda ^{\ast }}S_{\mu }\left( x\right)
\text{,}
\end{equation*}%
as required.

Finally, we show (KP4). Take $E\in \operatorname{FE}\left( \Lambda \right) $. Take $%
x\in r\left( E\right) \partial \Lambda $. Since $E\in x\left( 0\right) \operatorname{%
FE}\left( \Lambda \right) $ and $x$ is a boundary path, then there exists $%
\lambda \in E$ such that $x\left( 0,d\left( \lambda \right) \right) =\lambda
$. This implies%
\begin{align*}
\left( S_{r\left( E\right) }-S_{\lambda }S_{\lambda ^{\ast }}\right) \left(
x\right) & =S_{r\left( E\right) }\left( x\right) -S_{\lambda }S_{\lambda
^{\ast }}\left( x\right) \\
& =x-S_{\lambda }\left( x\left( d\left( \lambda \right) ,d\left( x\right)
\right) \right) \\
& =x-x=0\text{.}
\end{align*}%
Hence%
\begin{equation*}
\big(\prod_{\lambda \in E}\left( S_{r\left( E\right) }-S_{\lambda
}S_{\lambda ^{\ast }}\right) \big)\left( x\right) =0
\end{equation*}%
for $x\in r\left( E\right) \partial \Lambda $, and $\prod_{\lambda \in
E}\left( S_{r\left( E\right) }-S_{\lambda }S_{\lambda ^{\ast }}\right) =0$.

Thus $\left\{ S_{\lambda },S_{\mu ^{\ast }}:\lambda ,u\in \Lambda \right\} $
is a Kumjian-Pask $\Lambda $-family, as claimed. Now note that for $v\in
\Lambda ^{0}$, $v\partial \Lambda $ is nonempty. This implies that for all $%
r\in \left. R\right\backslash \left\{ 0\right\} $ and $v\in \Lambda ^{0}$, $%
rS_{v}\neq 0$.
\end{proof}

Using an alternate construction of a Kumjian-Pask $\Lambda $-family, we next
show that there is an $R$-algebra which is universal for Kumjian-Pask $%
\Lambda $-families.

\begin{theorem}
\label{universal-KP-family}Let $\Lambda $ be a finitely aligned $k$-graph
and $R$ be a commutative ring with $1$.

\begin{enumerate}
\item[(a)] There is a universal $R$-algebra ${\normalsize \operatorname{KP}}%
_{R}\left( \Lambda \right) $ generated by a Kumjian-Pask $\Lambda $-family $%
\{s_{\lambda },s_{\mu ^{\ast }}:\lambda ,u\in \Lambda \}$ such that whenever
$\left\{ S_{\lambda },S_{\mu ^{\ast }}:\lambda ,u\in \Lambda \right\} $ is a
Kumjian-Pask $\Lambda $-family in an $R$-algebra $A$, then there exists a
unique $R$-algebra homomorphism $\pi _{S}:{\normalsize \operatorname{KP}}_{R}\left(
\Lambda \right) \rightarrow A$ such that $\pi _{S}\left( s_{\lambda }\right)
=S_{\lambda }$ and $\pi _{S}\left( s_{\mu ^{\ast }}\right) =S_{\mu ^{\ast }}$
for $\lambda ,\mu \in \Lambda $.

\item[(b)] We have $rs_{v}\neq 0$ for all $r\in \left. R\right\backslash
\left\{ 0\right\} $ and $v\in \Lambda ^{0}$.

\item[(c)] The subsets%
\begin{equation*}
{\normalsize \operatorname{KP}}_{R}\left( \Lambda \right) _{n}:=\operatorname{span}%
_{R}\left\{ s_{\lambda }s_{\mu ^{\ast }}:\lambda ,\mu \in \Lambda ,d\left(
\lambda \right) -d\left( \mu \right) =n\right\}
\end{equation*}%
forms a $\mathbb{Z}^{k}$-grading of ${\normalsize \operatorname{KP}}_{R}\left(
\Lambda \right) $.
\end{enumerate}
\end{theorem}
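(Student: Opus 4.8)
The plan is to realize $\operatorname{KP}_R(\Lambda)$ as a quotient of a free $R$-algebra, to read off (b) from the boundary-path representation, and to prove (c) by putting a $\ZZ^k$-grading on the free algebra that descends to the quotient.

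For (a), I would let $F$ be the free nonunital associative $R$-algebra on the symbols $\{t_\lambda : \lambda \in \Lambda\} \cup \{t_{\mu^*} : \mu^* \in G(\Lambda^{\neq 0})\}$ and let $I$ be the two-sided ideal generated by the elements that encode (KP1)--(KP4): for instance $t_v^2 - t_v$ and $t_v t_w$ for $v \ne w$, then $t_\lambda t_\mu - t_{\lambda\mu}$ and $t_{\mu^*} t_{\lambda^*} - t_{(\lambda\mu)^*}$, then $t_{\lambda^*} t_\mu - \sum_{(\rho,\tau) \in \Lambda^{\min}(\lambda,\mu)} t_\rho t_{\tau^*}$, and finally $\prod_{\lambda \in E}(t_{r(E)} - t_\lambda t_{\lambda^*})$. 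Setting $\operatorname{KP}_R(\Lambda) := F/I$ and letting $s_\lambda, s_{\mu^*}$ be the images of the generators, the family $\{s_\lambda, s_{\mu^*}\}$ satisfies (KP1)--(KP4) by construction and generates $F/I$. The universal property is then formal: given a Kumjian-Pask family $\{S_\lambda, S_{\mu^*}\}$ in an $R$-algebra $A$, the universal property of $F$ supplies a homomorphism $F \to A$ with $t_\lambda \mapsto S_\lambda$ and $t_{\mu^*} \mapsto S_{\mu^*}$; since the $S$'s satisfy (KP1)--(KP4) this homomorphism annihilates every generator of $I$ and hence factors through the required $\pi_S$, which is unique because it is determined on generators.

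Part (b) is then immediate: applying (a) to the family $\{S_\lambda, S_{\mu^*}\}$ on $\operatorname{End}(\mathbb{F}_R(\partial \Lambda))$ from Proposition \ref{the-boundary-path-representation} produces $\pi_S$ with $\pi_S(rs_v) = rS_v$, and since $rS_v \neq 0$ for $r \neq 0$ we conclude $rs_v \neq 0$.

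For (c), I would grade $F$ over $\ZZ^k$ by giving $t_\lambda$ degree $d(\lambda)$ and $t_{\mu^*}$ degree $-d(\mu)$, and then check that every generator of $I$ is homogeneous. This is clear for the relations coming from (KP1), (KP2), and (KP4) (each factor $t_{r(E)} - t_\lambda t_{\lambda^*}$ has degree $0$), and the crux is (KP3): for $(\rho,\tau) \in \Lambda^{\min}(\lambda,\mu)$ one has $\lambda\rho = \mu\tau$, so $d(\rho) - d(\tau) = d(\mu) - d(\lambda)$ and both sides of the relation are homogeneous of degree $d(\mu) - d(\lambda)$. Hence $I$ is a graded ideal, $F/I$ inherits a $\ZZ^k$-grading with degree-$n$ component $\overline{F_n} := (F_n + I)/I$, and these components are in direct sum. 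To match them with the stated $\operatorname{KP}_R(\Lambda)_n$, note each $s_\lambda s_{\mu^*}$ is homogeneous of degree $d(\lambda) - d(\mu)$, so $\operatorname{KP}_R(\Lambda)_n \subseteq \overline{F_n}$, while Proposition \ref{properties-of-KP}(b) gives $\operatorname{KP}_R(\Lambda) = \sum_n \operatorname{KP}_R(\Lambda)_n$; since the $\overline{F_n}$ are independent and jointly span, equality $\operatorname{KP}_R(\Lambda)_n = \overline{F_n}$ follows, so $\{\operatorname{KP}_R(\Lambda)_n\}$ is the grading. The one genuine verification here is the degree bookkeeping for (KP3), which also underlies multiplicativity $\operatorname{KP}_R(\Lambda)_m \operatorname{KP}_R(\Lambda)_n \subseteq \operatorname{KP}_R(\Lambda)_{m+n}$ via Proposition \ref{properties-of-KP}(a); the rest is routine free-algebra bookkeeping.
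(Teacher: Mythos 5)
Your proposal is correct, and parts (a) and (b) coincide with the paper's proof: the paper also realises $\operatorname{KP}_R(\Lambda)$ as the quotient of a free $R$-algebra (on words in $\Lambda\cup G(\Lambda^{\neq 0})$) by the ideal $I$ generated by the relations, derives the universal property formally, and obtains (b) from the boundary-path representation of Proposition \ref{the-boundary-path-representation}. Where you genuinely diverge is the last step of (c). Both you and the paper grade the free algebra by degree, check that all generators of $I$ are homogeneous (with the same (KP3) computation $d(\rho)-d(\tau)=d(\mu)-d(\lambda)$ being the only nontrivial case), and conclude that the quotient is graded by the components $\overline{F_n}$. But to identify $\overline{F_n}$ with $\operatorname{KP}_R(\Lambda)_n$, the paper proves a separate technical claim (Claim \ref{Ff(w(X))-in-KP}) by induction on word length, repeatedly applying (KP2) and (KP3) to rewrite an arbitrary word as a sum of elements $s_\lambda s_{\mu^*}$ while tracking degrees. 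You replace that induction by a purely formal sandwich argument: $\operatorname{KP}_R(\Lambda)_n\subseteq\overline{F_n}$ for every $n$, Proposition \ref{properties-of-KP}(b) gives $\operatorname{KP}_R(\Lambda)=\sum_n\operatorname{KP}_R(\Lambda)_n$, and uniqueness of homogeneous decompositions in $\bigoplus_n\overline{F_n}$ then forces $\operatorname{KP}_R(\Lambda)_n=\overline{F_n}$. This is valid and there is no circularity, since Proposition \ref{properties-of-KP}(b) is established before the theorem for any family satisfying (KP1--3) and makes no reference to the grading; in effect the rewriting content of the paper's Claim is already encapsulated in that proposition, and your argument exploits this to avoid redoing it with degree bookkeeping. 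What the paper's longer route buys is self-containedness at the level of words (its Claim does not lean on the somewhat tersely proved Proposition \ref{properties-of-KP}(b)); what your route buys is brevity and a cleaner separation between the algebraic rewriting (Proposition \ref{properties-of-KP}(b)) and the formal theory of graded ideals. One cosmetic remark: once $\operatorname{KP}_R(\Lambda)_n=\overline{F_n}$ is known, multiplicativity $\operatorname{KP}_R(\Lambda)_m\operatorname{KP}_R(\Lambda)_n\subseteq\operatorname{KP}_R(\Lambda)_{m+n}$ is inherited directly from the grading of $F/I$, so the appeal to Proposition \ref{properties-of-KP}(a) is not needed there.
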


\begin{proof}
We use an argument similar to \cite[Theorem 3.4]{ACaHR13} and \cite[Theorem
3.7]{CFaH14}. To show (a), first we define $X:=\Lambda \cup G\left( \Lambda
^{\neq 0}\right) $ and $\mathbb{F}_{R}\left( w\left( X\right) \right) $ be
the free algebra on the set $w\left( X\right) $ of words on $X$. Let $I$ be
the ideal of $\mathbb{F}_{R}\left( w\left( X\right) \right) $ generated by
elements of the following sets:

\begin{enumerate}
\item[(i)] $\left\{ vw-\delta _{v,w}v:v,w\in \Lambda ^{0}\right\} $,

\item[(ii)] $\{\lambda -\mu \nu ,\lambda ^{\ast }-\nu ^{\ast }\mu ^{\ast
}:\lambda ,\mu ,\nu \in \Lambda $ and $\lambda =\mu \nu \}$,

\item[(iii)] $\{\lambda ^{\ast }\mu -\sum_{(\rho ,\tau )\in \Lambda ^{\min
}\left( \lambda ,\mu \right) }\rho \tau ^{\ast }:\lambda ,\mu \in \Lambda \}$%
, and

\item[(iv)] $\{\prod_{\lambda \in E}\left( r\left( E\right) -\lambda \lambda
^{\ast }\right) :E\in \operatorname{FE}\left( \Lambda \right) \}$.
\end{enumerate}

Now define ${\normalsize \operatorname{KP}}_{R}\left( \Lambda \right) :=\mathbb{F}%
_{R}\left( w\left( X\right) \right) /I$ and $q:\mathbb{F}_{R}\left( w\left(
X\right) \right) \rightarrow \mathbb{F}_{R}\left( w\left( X\right) \right)
/I $ be the quotient map. Define $s_{\lambda }:=q\left( \lambda \right) $
for $\lambda \in \Lambda $, and $s_{\mu ^{\ast }}:=q\left( \mu ^{\ast
}\right) $ for $\mu ^{\ast }\in G\left( \Lambda ^{\neq 0}\right) $. Then $%
\{s_{\lambda },s_{\mu ^{\ast }}:\lambda \in \Lambda ,\mu ^{\ast }\in G\left(
\Lambda ^{\neq 0}\right) \}$ is a Kumjian-Pask $\Lambda $-family in $%
{\normalsize \operatorname{KP}}_{R}\left( \Lambda \right) $.

Now let $\left\{ S_{\lambda },S_{\mu ^{\ast }}:\lambda ,u\in \Lambda
\right\} $ be a Kumjian-Pask $\Lambda $-family in an $R$-algebra $A$. Define
$f:X\rightarrow A$ by $f\left( \lambda \right) :=S_{\lambda }$ for $\lambda
\in \Lambda $, and $f\left( \mu ^{\ast }\right) :=S_{\mu ^{\ast }}$ for $\mu
^{\ast }\in G\left( \Lambda ^{\neq 0}\right) $. The universal property of $%
\mathbb{F}_{R}\left( w\left( X\right) \right) $ gives an unique $R$-algebra
homomorphism $\phi :\mathbb{F}_{R}\left( w\left( X\right) \right)
\rightarrow A$ such that $\phi |_{X}=f$. Since $\left\{ S_{\lambda },S_{\mu
^{\ast }}:\lambda ,u\in \Lambda \right\} $ is a Kumjian-Pask $\Lambda $%
-family, then $I\subseteq \ker \left( \phi \right) $. Thus there exists an $%
R $-algebra homomorphism $\pi _{S}:{\normalsize \operatorname{KP}}_{R}\left( \Lambda
\right) \rightarrow A$ such that $\pi _{S}\circ q=\phi $. The homomorphism $%
\pi _{S}$ is unique since the elements in $X$ generate $\mathbb{F}_{R}\left(
w\left( X\right) \right) $ as an algebra. Furthermore, we have $\pi
_{S}\left( s_{\lambda }\right) =S_{\lambda }$ for $\lambda \in \Lambda $ and
$\pi _{S}\left( s_{\mu ^{\ast }}\right) =S_{\mu ^{\ast }}$ for $\mu ^{\ast
}\in G\left( \Lambda ^{\neq 0}\right) $, as required.

To show (b), let $\left\{ S_{\lambda },S_{\mu ^{\ast }}:\lambda ,u\in
\Lambda \right\} $ be the Kumjian-Pask $\Lambda $-family as in Proposition~%
\ref{the-boundary-path-representation}. Then $rS_{v}\neq 0$ for $v\in
\Lambda ^{0}$. Since $\pi _{S}\left( rs_{v}\right) =rS_{v}\neq 0$ for all $%
r\in \left. R\right\backslash \left\{ 0\right\} $ and $v\in \Lambda ^{0}$,
we have $rs_{v}\neq 0$ for all $r\in \left. R\right\backslash \left\{
0\right\} $ and $v\in \Lambda ^{0}$.

Next we show (c). We first extend the degree map to $w\left( X\right) $ by $%
d\left( w\right) :=\sum_{i=1}^{\left\vert w\right\vert }d\left( \left(
w_{i}\right) \right) $ for $w\in w\left( X\right) $. By \cite[Proposition 2.7%
]{ACaHR13}, $\mathbb{F}_{R}\left( w\left( X\right) \right) $ is $\mathbb{Z}%
^{k}$-graded by the subgroups%
\begin{equation*}
\mathbb{F}_{R}\left( w\left( X\right) \right) _{n}:=\left\{ \sum_{w\in
w\left( X\right) }r_{w}w:r_{w}\neq 0\text{ implies }d\left( w\right)
=n\right\} \text{.}
\end{equation*}

Now we claim that the ideal $I$ defined in (a) is a graded ideal. It
suffices to show that $I$ is generated by elements in $\mathbb{F}_{R}\left(
w\left( X\right) \right) _{n}$ for some $n\in \mathbb{Z}^{k}$. Since $%
d\left( v\right) =0$ for $v\in \Lambda ^{0}$, then the generators in (i)
belong to $\mathbb{F}_{R}\left( w\left( X\right) \right) _{0}$. If $\lambda
=\mu \nu $ in $\Lambda $, then $\lambda -\mu \nu $ belongs to $\mathbb{F}%
_{R}\left( w\left( X\right) \right) _{d\left( \lambda \right) }$ and $%
\lambda ^{\ast }-\nu ^{\ast }\mu ^{\ast }$ belongs to $\mathbb{F}_{R}\left(
w\left( X\right) \right) _{-d\left( \lambda \right) }$. For $\lambda ,\mu
\in \Lambda $ and $(\rho ,\tau )\in \Lambda ^{\min }\left( \lambda ,\mu
\right) $, we have%
\begin{equation*}
d\left( \rho \right) -d\left( \tau \right) =\left( d\left( \lambda \right)
\vee d\left( \mu \right) -d\left( \lambda \right) \right) -\left( d\left(
\lambda \right) \vee d\left( \mu \right) -d\left( \mu \right) \right)
=-d\left( \lambda \right) +d\left( \mu \right)
\end{equation*}%
and then the generators in (iii) belong to $\mathbb{F}_{R}\left( w\left(
X\right) \right) _{-d\left( \lambda \right) +d\left( \mu \right) }$.
Finally, a word $\lambda \lambda ^{\ast }$ has degree $0$ and then the
generators in (iv) belong to $\mathbb{F}_{R}\left( w\left( X\right) \right)
_{0}$. Thus $I$ is a graded ideal.

Since $I$ is graded, then ${\normalsize \operatorname{KP}}_{R}\left( \Lambda \right)
=\mathbb{F}_{R}\left( w\left( X\right) \right) /I$ is graded by the subgroups%
\begin{equation*}
\left( \mathbb{F}_{R}\left( w\left( X\right) \right) /I\right) _{n}:=\operatorname{%
span}_{R}\left\{ q\left( w\right) :w\in w\left( X\right) ,d\left( w\right)
=n\right\} \text{.}
\end{equation*}%
By Proposition \ref{properties-of-KP}.(b), we have ${\normalsize \operatorname{KP}}%
_{R}\left( \Lambda \right) =\operatorname{span}_{R}\left\{ s_{\lambda }s_{\mu ^{\ast
}}:\lambda ,u\in \Lambda ,s\left( \lambda \right) =s\left( \mu \right)
\right\} $. We have to show that%
\begin{equation*}
{\normalsize \operatorname{KP}}_{R}\left( \Lambda \right) _{n}:=\operatorname{span}%
_{R}\left\{ s_{\lambda }s_{\mu ^{\ast }}:\lambda ,u\in \Lambda ,d\left(
\lambda \right) -d\left( \mu \right) =n\right\} =\left( \mathbb{F}_{R}\left(
w\left( X\right) \right) /I\right) _{n}\text{.}
\end{equation*}%
Take $\lambda ,u\in \Lambda $ with $d\left( \lambda \right) -d\left( \mu
\right) =n$. Then $s_{\lambda }s_{\mu ^{\ast }}=q\left( \lambda \right)
q\left( \mu ^{\ast }\right) =q\left( \lambda \mu ^{\ast }\right) $ and $%
d\left( \lambda \mu ^{\ast }\right) =d\left( \lambda \right) -d\left( \mu
\right) =n$. Hence $s_{\lambda }s_{\mu ^{\ast }}\in \left( \mathbb{F}%
_{R}\left( w\left( X\right) \right) /I\right) _{n}$ and ${\normalsize \operatorname{%
KP}}_{R}\left( \Lambda \right) _{n}\subseteq \left( \mathbb{F}_{R}\left(
w\left( X\right) \right) /I\right) _{n}$.

To prove $\left( \mathbb{F}_{R}\left( w\left( X\right) \right) /I\right)
_{n}\subseteq {\normalsize \operatorname{KP}}_{R}\left( \Lambda \right) _{n} $, we
first establish the following claim:

\begin{claim}
\label{Ff(w(X))-in-KP} Let $X:=\Lambda \cup G\left( \Lambda ^{\neq 0}\right)
$ and $q:\mathbb{F}_{R}\left( w\left( X\right) \right) \rightarrow
{\normalsize \operatorname{KP}}_{R}\left( \Lambda \right) $ be the quotient map.
Then for $w\in w\left( X\right) $, we have $q\left( w\right) \in
{\normalsize \operatorname{KP}}_{R}\left( \Lambda \right) _{d\left( w\right) }$.
\end{claim}

\begin{proof}[Proof of Claim~\protect\ref{Ff(w(X))-in-KP}]
We are modifying the proof of \cite[Lemma 3.5]{ACaHR13} and \cite[Lemma 3.8]%
{CFaH14} using our version of (KP3). We prove the claim by induction on $%
\left\vert w\right\vert $. For $\left\vert w\right\vert =0$, we have $w=v$
for some $v\in \Lambda ^{0}$. Then $q\left( w\right) =s_{v}=s_{v}s_{v^{\ast
}}$ and $d\left( v\right) -d\left( v\right) =0$. So $q\left( w\right) \in
{\normalsize \operatorname{KP}}_{R}\left( \Lambda \right) _{d\left( w\right) }$.

For $\left\vert w\right\vert =1$, we have two possibilities. First suppose $%
w=\lambda $ for $\lambda \in \Lambda $. Then $q\left( w\right) =s_{\lambda
}=s_{\lambda }s_{s\left( \lambda \right) ^{\ast }}$ and $d\left( \lambda
\right) -d\left( s\left( \lambda \right) \right) =d\left( \lambda \right) $.
So $q\left( w\right) \in {\normalsize \operatorname{KP}}_{R}\left( \Lambda \right)
_{d\left( w\right) }$. Next suppose $w=\lambda ^{\ast }$ for $\lambda \in
\Lambda $. Then $q\left( w\right) =s_{\lambda ^{\ast }}=s_{s\left( \lambda
\right) }s_{\lambda ^{\ast }}$ and $d\left( s\left( \lambda \right) \right)
-d\left( \lambda \right) =-d\left( \lambda \right) =d\left( \lambda ^{\ast
}\right) $. So $q\left( w\right) \in {\normalsize \operatorname{KP}}_{R}\left(
\Lambda \right) _{d\left( w\right) }$.

For $\left\vert w\right\vert =2$, we have four possibilities: $w=\lambda \mu
^{\ast }$, $w=\lambda \mu $, $w=\mu ^{\ast }\lambda ^{\ast }$, or $w=\lambda
^{\ast }\mu $. For the first three cases, we have%
\begin{align*}
& q\left( \lambda \mu ^{\ast }\right) =s_{\lambda }s_{\mu ^{\ast }}\text{
and }d\left( \lambda \right) -d\left( \mu \right) =d\left( \lambda \mu
^{\ast }\right) \text{,} \\
& q\left( \lambda \mu \right) =s_{\lambda \mu }s_{s\left( \mu \right) ^{\ast
}}\text{ and }d\left( \lambda \mu \right) -d\left( s\left( \mu \right)
\right) =d\left( \lambda \mu \right) \text{,} \\
& q\left( \mu ^{\ast }\lambda ^{\ast }\right) =s_{s\left( \mu \right)
}s_{\left( \lambda \mu \right) ^{\ast }}\text{ and }d\left( s\left( \mu
\right) \right) -d\left( \left( \lambda \mu \right) ^{\ast }\right) =d\left(
\mu ^{\ast }\lambda ^{\ast }\right) \text{,}
\end{align*}%
as required. Suppose $w=\lambda ^{\ast }\mu $. By (KP3), we have%
\begin{equation*}
q\left( \lambda ^{\ast }\mu \right) =s_{\lambda ^{\ast }}s_{\mu
}=\sum_{(\rho ,\tau )\in \Lambda ^{\min }\left( \lambda ,\mu \right)
}s_{\rho }s_{\tau ^{\ast }}\text{.}
\end{equation*}%
For $(\rho ,\tau )\in \Lambda ^{\min }\left( \lambda ,\mu \right) $, we have
$\lambda \rho =\mu \tau $ and then $d\left( w\right) =d\left( \mu \right)
-d\left( \lambda \right) =d\left( \rho \right) -d\left( \rho \right) $. So $%
q\left( w\right) \in {\normalsize \operatorname{KP}}_{R}\left( \Lambda \right)
_{d\left( w\right) }$.

Now suppose that $n\geq 2$ and $q\left( y\right) \in {\normalsize \operatorname{KP}}%
_{R}\left( \Lambda \right) _{d\left( y\right) }$ for every word $y$ with $%
\left\vert y\right\vert \leq n$. Let $w$ be a word with $\left\vert
w\right\vert =n+1$ and $q\left( w\right) \neq 0$. If $w$ contains a subword $%
w_{i}w_{i+1}=\lambda \mu $, then $\lambda $ and $\mu $ are composable in $%
\Lambda $ since otherwise $q\left( \lambda \mu \right) =0$. Now let $%
w^{\prime }$ be the word obtained from $w$ by replacing $w_{i}w_{i+1}$ with
the single path $\lambda \mu $, and then%
\begin{equation*}
q\left( w\right) =s_{w_{1}}\cdots s_{w_{i-1}}s_{\lambda }s_{\mu
}s_{w_{i+2}}s_{w_{n+1}}=s_{w_{1}}\cdots s_{w_{i-1}}s_{\lambda \mu
}s_{w_{i+2}}s_{w_{n+1}}=q\left( w^{\prime }\right) \text{.}
\end{equation*}%
Since $\left\vert w^{\prime }\right\vert =n$ and $d\left( w^{\prime }\right)
=d\left( w\right) $, the inductive hypothesis implies $q\left( w\right) \in
{\normalsize \operatorname{KP}}_{R}\left( \Lambda \right) _{d\left( w\right) }$. A
similar argument shows $q\left( w\right) \in {\normalsize \operatorname{KP}}%
_{R}\left( \Lambda \right) _{d\left( w\right) }$ whenever $w$ contains a
subword $w_{i}w_{i+1}=\mu ^{\ast }\lambda ^{\ast }$.

So suppose $w$ contains no subword of the form $\lambda \mu $ or $\mu ^{\ast
}\lambda ^{\ast }$. Since $\left\vert w\right\vert \geq 3$, either $%
w_{1}w_{2}$ or $w_{2}w_{3}$ has the form $\lambda ^{\ast }\mu $. By (KP3),
we write $q\left( w\right) $ as a sum of terms $q\left( y^{i}\right) $ with $%
\left\vert y^{i}\right\vert =n+1$ and $d\left( y^{i}\right) =d\left(
w\right) $. Since $\left\vert w\right\vert \geq 3$, each nonzero summand $%
q\left( y^{i}\right) $ contains a factor of the form $s_{\gamma }s_{\rho }$
or one of the form $s_{\tau ^{\ast }}s_{\gamma ^{\ast }}$. Then the previous
argument shows that every $q\left( y^{i}\right) \in {\normalsize \operatorname{KP}}%
_{R}\left( \Lambda \right) _{d\left( w\right) }$ and $q\left( w\right) \in
{\normalsize \operatorname{KP}}_{R}\left( \Lambda \right) _{d\left( w\right) }$, as
required. \hfil\penalty100\hbox{}\nobreak\hfill%
\hbox{\qed\
Claim~\ref{Ff(w(X))-in-KP}} \renewcommand\qed{}
\end{proof}

Every element in $\left( \mathbb{F}_{R}\left( w\left( X\right) \right)
/I\right) _{n}$ is in the form $q\left( w\right) $ with $w\in w\left(
X\right) $ and $d\left( w\right) =n$, which, by Claim \ref{Ff(w(X))-in-KP},\
belongs to ${\normalsize \operatorname{KP}}_{R}\left( \Lambda \right) _{n}$. Then $%
\left( \mathbb{F}_{R}\left( w\left( X\right) \right) /I\right) _{n}\subseteq
{\normalsize \operatorname{KP}}_{R}\left( \Lambda \right) _{n} $, as required.
\end{proof}

\begin{definition}
Suppose that $\left\{ S_{\lambda },S_{\mu ^{\ast }}:\lambda ,u\in \Lambda
\right\} $ is the Kumjian-Pask $\Lambda $-family in the $R$-algebra $\operatorname{%
End}\left( \mathbb{F}_{R}\left( \partial \Lambda \right) \right) $ as in\
Proposition \ref{the-boundary-path-representation}. We call the $R$-algebra
homomorphism $\pi _{S}:{\normalsize \operatorname{KP}}_{R}\left( \Lambda \right)
\rightarrow \operatorname{End}\left( \mathbb{F}_{R}\left( \partial \Lambda \right)
\right) $ obtained from Theorem \ref{universal-KP-family}.(a), the \emph{%
boundary path representation of }${\normalsize \operatorname{KP}}_{R}\left( \Lambda
\right) $.
\end{definition}

\section{The graded-invariant uniqueness theorem}

\label{Section-the-graded-uniqueness-theorem}Throughout this section, $%
\Lambda $ is a finitely aligned $k$-graph and $R$ is a commutative ring with
identity $1$.

\begin{theorem}[The graded-uniqueness theorem]
\label{the-graded-uniqueness-theorem}Let $\Lambda $ be a finitely aligned $k$%
-graph, $R$ be a commutative ring with $1$, and $A$ be a $\mathbb{Z}^{k}$%
-graded $R$-algebra. Suppose that $\pi :{\normalsize \operatorname{KP}}_{R}\left(
\Lambda \right) \rightarrow A$ is a $\mathbb{Z}^{k}$-graded $R$-algebra
homomorphism such that $\pi \left( rs_{v}\right) \neq 0$ for all $r\in
\left. R\right\backslash \left\{ 0\right\} $ and $v\in \Lambda ^{0}$. Then $%
\pi $ is injective.
\end{theorem}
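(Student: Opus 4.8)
The plan is to exploit the $\mathbb{Z}^{k}$-grading to reduce to homogeneous elements, and then to isolate a single vertex term whose image the hypothesis forbids from vanishing. Since $A$ is $\mathbb{Z}^{k}$-graded and $\pi$ is a graded homomorphism, $\ker\pi$ is a graded ideal: writing $a\in\ker\pi$ as $a=\sum_{n}a_{n}$ with $a_{n}\in\operatorname{KP}_{R}(\Lambda)_{n}$, and using that $\pi(a_{n})\in A_{n}$ together with the directness of the grading of $A$, one gets $\pi(a_{n})=0$ for every $n$. Thus it suffices to show that a homogeneous $a\in\operatorname{KP}_{R}(\Lambda)_{n}$ with $\pi(a)=0$ is zero. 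By Proposition \ref{properties-of-KP}(b) and the description of $\operatorname{KP}_{R}(\Lambda)_{n}$ in Theorem \ref{universal-KP-family}(c), I may write $a=\sum_{(\lambda,\mu)\in F}r_{\lambda,\mu}s_{\lambda}s_{\mu^{\ast}}$ for a finite set $F$ of pairs with $s(\lambda)=s(\mu)$, $d(\lambda)-d(\mu)=n$, and each $r_{\lambda,\mu}\neq 0$; the goal becomes to show $F=\emptyset$.

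Before isolating, I record the transfer principle. The image family $\{\pi(s_{\lambda}),\pi(s_{\mu^{\ast}})\}$ is itself a Kumjian-Pask $\Lambda$-family in $A$, because $\pi$ is an $R$-algebra homomorphism and hence carries (KP1)--(KP4) for $\{s_{\lambda},s_{\mu^{\ast}}\}$ to the same relations for their images. Moreover the hypothesis gives $r\,\pi(s_{v})=\pi(rs_{v})\neq 0$ for all $r\in R\setminus\{0\}$ and $v\in\Lambda^{0}$, so Proposition \ref{properties-of-KP}(a)--(d) are available for the image family in $A$ as well. This is the device that moves nonvanishing statements from $\operatorname{KP}_{R}(\Lambda)$ across $\pi$.

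The isolation step is the heart. I choose $(\alpha,\beta)\in F$ with $d(\alpha)$ maximal (a maximal element of the finite poset $\{d(\lambda):(\lambda,\mu)\in F\}$) and form $c:=s_{\alpha^{\ast}}\,a\,s_{\beta}$, which is homogeneous of degree $0$. The term $(\alpha,\beta)$ contributes $r_{\alpha,\beta}s_{v}$ with $v:=s(\alpha)=s(\beta)$, while every other term contributes $r_{\lambda,\mu}s_{\alpha^{\ast}}s_{\lambda}s_{\mu^{\ast}}s_{\beta}$; using (KP3), finite alignment, and Proposition \ref{properties-of-KP}(a), each such product rewrites as a finite sum of terms $s_{\xi}s_{\zeta^{\ast}}$ with $\xi,\zeta\in v\Lambda$ and $d(\xi)=d(\zeta)$. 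Exploiting the maximality of $d(\alpha)$ together with Proposition \ref{properties-of-KP}(c) and Remark \ref{properties-of-KP-additional} to control these products, I arrange $c=r_{\alpha,\beta}s_{v}+\sum_{i}c_{i}s_{\xi_{i}}s_{\zeta_{i}^{\ast}}$ where in each leftover term at least one of $\xi_{i},\zeta_{i}$ lies in $v\Lambda\setminus\{v\}$. Letting $G$ collect these nontrivial paths, a finite subset of $v\Lambda\setminus\{v\}$, I multiply $c$ on the appropriate side by the commuting idempotent $\prod_{\eta\in G}(s_{v}-s_{\eta}s_{\eta^{\ast}})$: since $(s_{v}-s_{\eta}s_{\eta^{\ast}})s_{\eta}=0=s_{\eta^{\ast}}(s_{v}-s_{\eta}s_{\eta^{\ast}})$, this annihilates every leftover term while fixing $r_{\alpha,\beta}s_{v}$, leaving $r_{\alpha,\beta}\prod_{\eta\in G}(s_{v}-s_{\eta}s_{\eta^{\ast}})$. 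Applying $\pi$ and using $\pi(a)=0$ forces $r_{\alpha,\beta}\prod_{\eta\in G}(\pi(s_{v})-\pi(s_{\eta})\pi(s_{\eta^{\ast}}))=0$, and if $G$ is non-exhaustive this contradicts Proposition \ref{properties-of-KP}(d) for the image family, whence $F=\emptyset$.

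The main obstacle is precisely guaranteeing that the leftover set $G$ can be taken non-exhaustive, so that Proposition \ref{properties-of-KP}(d) applies, and ensuring no cancellation destroys the coefficient $r_{\alpha,\beta}$ of $s_{v}$. In the row-finite, no-sources setting one would simply insert a resolution of identity $s_{v}=\sum_{\gamma\in v\Lambda^{N}}s_{\gamma}s_{\gamma^{\ast}}$ and pick off a matrix unit, but for a merely finitely aligned $k$-graph this sum is infinite or incomplete and is unavailable. I therefore expect to need a careful combinatorial reduction: choosing the extremal term, and if necessary first replacing $(\alpha,\beta)$ by extensions $(\alpha\delta,\beta\delta)$ along a path $\delta\in v\Lambda$ that \emph{escapes} the finite family of competing paths (exploiting $v\partial\Lambda\neq\emptyset$), so as to force the residual defect set to be non-exhaustive, or else to absorb an exhaustive residue via (KP4). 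Verifying the existence of such a $\delta$ and ruling out cancellation is the delicate, finitely-aligned-specific part of the argument; everything else is formal manipulation of (KP1)--(KP4) together with the transfer principle recorded above.
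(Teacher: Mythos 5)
Your skeleton---reduce to homogeneous elements via the grading, note that $\{\pi(s_{\lambda}),\pi(s_{\mu^{\ast}})\}$ is again a Kumjian--Pask $\Lambda$-family satisfying $r\pi(s_{v})\neq 0$, isolate one coefficient against a vertex idempotent, and invoke Proposition~\ref{properties-of-KP}(d)---is essentially the paper's skeleton (the paper reduces to the degree-zero core via Lemma~\ref{generator-of-ideal} and Theorem~\ref{injectivity-on-the core} rather than to general homogeneous components, but that difference is cosmetic). However, the two difficulties you defer to the end are not finishing touches; they are the entire finitely-aligned content of the theorem, and neither is closed in your proposal. First, your extremal choice is backwards. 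Take $F=\{(\lambda,\mu),(\lambda\tau,\mu\tau)\}$ with $d(\tau)\neq 0$; your rule selects $(\alpha,\beta)=(\lambda\tau,\mu\tau)$, and (KP2)--(KP3) give $s_{\alpha^{\ast}}s_{\lambda}s_{\mu^{\ast}}s_{\beta}=s_{\tau^{\ast}}s_{\tau}=s_{v}$, so the cross term contributes $r_{\lambda,\mu}s_{v}$ and cancels against $r_{\alpha,\beta}s_{v}$: for $a=s_{\lambda}s_{\mu^{\ast}}-s_{\lambda\tau}s_{(\mu\tau)^{\ast}}$ one computes $c=s_{\alpha^{\ast}}as_{\beta}=0$, although $a=s_{\lambda}\left(s_{s(\lambda)}-s_{\tau}s_{\tau^{\ast}}\right)s_{\mu^{\ast}}\neq 0$ whenever $\{\tau\}$ is non-exhaustive, by Proposition~\ref{properties-of-KP}(d). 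Choosing $d(\alpha)$ \emph{minimal} instead does rule out such diagonal contributions, since a cross term producing a multiple of $s_{v}$ forces $\alpha=\lambda\tau$, $\beta=\mu\tau$ and hence $d(\lambda)\lneq d(\alpha)$; that part is repairable. Second, and fatally, when the residual set $G$ is exhaustive your final identity is vacuous: $\prod_{\eta\in G}\left(s_{v}-s_{\eta}s_{\eta^{\ast}}\right)=0$ in $\operatorname{KP}_{R}(\Lambda)$ by (KP4), so applying $\pi$ yields $0=0$ and no contradiction with Proposition~\ref{properties-of-KP}(d) is available. Your suggested escape cannot work as stated: by the very definition of a boundary path, every $x\in v\partial\Lambda$ satisfies $x(0,d(\eta))=\eta$ for some $\eta$ in any finite exhaustive set, so there is no $\delta$ that ``escapes'' an exhaustive residue; and absorbing the residue via (KP4) destroys exactly the term whose coefficient you were trying to isolate.

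This is precisely the gap that the paper's Lemmas~\ref{span-of-M} and~\ref{injectivity-on-M} (imported from Section~3 of \cite{RSY04}) exist to fill. One passes from the finite set $E$ of paths occurring in $a$ to the finite closure $\Pi E$, and replaces the spanning elements $s_{\lambda}s_{\mu^{\ast}}$ by the corrected elements $\Theta(s)_{\lambda,\mu}^{\Pi E}=s_{\lambda}\big(\prod_{\nu\in T(\lambda)}\left(s_{s(\lambda)}-s_{\lambda\nu}s_{(\lambda\nu)^{\ast}}\right)\big)s_{\mu^{\ast}}$; the change of basis $s_{\lambda}s_{\mu^{\ast}}=\sum_{\lambda\nu\in\Pi E}\Theta(s)_{\lambda\nu,\mu\nu}^{\Pi E}$ performs, once and for all and with guaranteed termination inside the finite set $\Pi E$, the overlap bookkeeping your expansion attempts, and the $\Theta$'s are genuine matrix units. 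The decisive point, which replaces your missing non-exhaustiveness argument, is the equivalence in Lemma~\ref{injectivity-on-M}: $\Theta(s)_{\lambda,\mu}^{\Pi E}=0$ if and only if $T(\lambda)$ is exhaustive. Thus exhaustive residues give terms that are identically zero, carry no coefficient information, and simply drop out, while every surviving term has non-exhaustive $T(\lambda)$, so that sandwiching $\pi\big(\Theta(s)_{\rho,\rho}^{\Pi E}\big)\pi(a)\pi\big(\Theta(s)_{\tau,\tau}^{\Pi E}\big)=r_{\rho,\tau}\Theta\left(\pi(s)\right)_{\rho,\tau}^{\Pi E}$ together with Remark~\ref{properties-of-KP-additional-2} forces $r_{\rho,\tau}=0$. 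Until you supply an analogue of these two lemmas (or re-derive the relevant results of \cite{RSY04}), your argument remains an outline whose decisive step is missing.
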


We start the proof of Theorem \ref{the-graded-uniqueness-theorem} by
adapting some $C^{\ast }$-algebra results used to prove the gauge-invariant
uniqueness theorem \cite[Theorem 4.2]{RSY04} to Kumjian-Pask algebras.
Although the argument is rather technical, the point is that most of the
argument in $C^{\ast}$-algebra setting also works in our situation.

First we recall from \cite[Definition 2.5]{RSY04} that a \emph{Cuntz-Krieger
}$\Lambda $\emph{-family} is a collection $\left\{ T_{\lambda }:\lambda \in
\Lambda \right\} $ of partial isometries (in other words, it satisfies $%
T_{\lambda }=T_{\lambda }T_{\lambda }^{\ast }T_{\lambda }$ for $\lambda \in
\Lambda $, see \cite[Appendix A]{CBMS}) in a $C^{\ast }$-algebra $B$
satisfying:

\begin{enumerate}
\item[(TCK1)] $\left\{ T_{v}:v\in \Lambda ^{0}\right\} $ is a collection of
mutually orthogonal projections;

\item[(TCK2)] $T_{\lambda }T_{\mu }=T_{\lambda \mu }$ whenever $s\left(
\lambda \right) =r\left( \mu \right) $;

\item[(TCK3)] $T_{\lambda }^{\ast }T_{\mu }=\sum_{(\rho ,\tau )\in \Lambda
^{\min }\left( \lambda ,\mu \right) }T_{\rho }T_{\tau }^{\ast }$ for all $%
\lambda ,\mu \in \Lambda $; and

\item[(CK)] $\prod_{\lambda \in E}\left( T_{r\left( E\right) }-T_{\lambda
}T_{\lambda }^{\ast }\right) =0$ for all $E\in \operatorname{FE}\left( \Lambda
\right) $.
\end{enumerate}

For a finitely aligned $k$-graph $\Lambda $, there exists a universal $%
C^{\ast }$-algebra $C^{\ast }\left( \Lambda \right) $ generated by the
universal Cuntz-Krieger $\Lambda $-family $\left\{ t_{\lambda }:\lambda \in
\Lambda \right\} $. Now suppose that $\left\{ S_{\lambda },S_{\mu ^{\ast
}}:\lambda ,u\in \Lambda \right\} $ is a Kumjian-Pask $\Lambda $-family in
an $R$-algebra $A$ and we define $T_{\lambda }:=S_{\lambda }$ for $\lambda
\in \Lambda $ and $T_{\mu }^{\ast }:=S_{\mu ^{\ast }}$ for $\mu \in G\left(
\Lambda ^{\neq 0}\right) $. Then $\left\{ T_{\lambda }:\lambda \in \Lambda
\right\} $ is a collection satisfying $T_{\lambda }=T_{\lambda }T_{\lambda
}^{\ast }T_{\lambda }$ for $\lambda \in \Lambda $, (TCK1-3) and (CK). (Note
that we do not say that $\left\{ T_{\lambda }:\lambda \in \Lambda \right\} $
is a Cuntz-Krieger $\Lambda $-family, since we need a $C^{\ast }$-algebra
containing $T_{\lambda },T_{\mu }^{\ast }$.) Similarly, a Cuntz-Krieger $%
\Lambda $-family in a $C^{\ast }$-algebra gives a Kumjian-Pask $\Lambda $%
-family. Thus one can translate proofs about Cuntz-Krieger $\Lambda $%
-families to proofs about Kumjian-Pask $\Lambda $-families.

The key ingredient to proof of Theorem \ref{the-graded-uniqueness-theorem}
is proving that the uniqueness theorem holds on the core ${\normalsize \operatorname{%
KP}}_{R}\left( \Lambda \right) _{0}:=\operatorname{span}_{R}\left\{ s_{\lambda
}s_{\mu ^{\ast }}:d\left( \lambda \right) =d\left( \mu \right) \right\} $
(Theorem \ref{injectivity-on-the core}). First we establish some preliminary
results and notation.

Following \cite[Lemma 3.2]{RSY04}, for every finite set $E\subseteq \Lambda $%
, there exists a finite set $F\subseteq \Lambda $ which contains $E$ and
satisfies%
\begin{align}
\lambda ,\mu ,\rho ,\tau \in & F\text{, }d\left( \lambda \right) =d\left(
\mu \right) \text{, }d\left( \rho \right) =d\left( \tau \right) \text{, }%
s\left( \lambda \right) =s\left( \mu \right) \text{, and }s\left( \rho
\right) =s\left( \tau \right) \text{ }  \label{pi-E} \\
& \text{imply }\left\{ \lambda \alpha ,\tau \beta :\left( \alpha ,\beta
\right) \in \Lambda ^{\min }\left( \mu ,\rho \right) \right\} \subseteq F%
\text{.}  \notag
\end{align}%
We then write
\begin{equation*}
\Pi E:=\bigcap \{F\subseteq \Lambda :E\subseteq F\text{ and }F\text{
satisfies \eqref{pi-E}}\}
\end{equation*}%
and $\Pi E\times _{d,s}\Pi E$ for the set $\left\{ \left( \lambda ,\mu
\right) \in \Pi E\times \Pi E:d\left( \lambda \right) =d\left( \mu \right)
,s\left( \lambda \right) =s\left( \mu \right) \right\} $. Note that $\Pi E$
is finite. Now recall from Notation 3.12 of \cite{RSY04} that for $\lambda
\in \Pi E$, we write%
\begin{equation*}
T\left( \lambda \right) :=\left\{ \nu \in s\left( \lambda \right) \Lambda
:d\left( \nu \right) \neq 0,\lambda \nu \in \Pi E\right\} \text{.}
\end{equation*}%
Since $\lambda T\left( \lambda \right) \subseteq \Pi E$ and $\Pi E$ is
finite, then $T\left( \lambda \right) $ is also finite.

Now suppose that $\left\{ S_{\lambda },S_{\mu ^{\ast}}:\lambda ,u\in \Lambda
\right\}$ is a Kumjian-Pask $\Lambda $-family in an $R$-algebra $A$. The
argument of Lemma 3.2 of \cite{RSY04} shows that the set%
\begin{equation*}
M_{\Pi E}^{S}:=\operatorname{span}_{R}\left\{ S_{\lambda }S_{\mu ^{\ast }}:\left(
\lambda ,\mu \right) \in \Pi E\times _{d,s}\Pi E\right\}
\end{equation*}%
is closed under multiplication. For $\left( \lambda ,\mu \right) \in \Pi
E\times _{d,s}\Pi E$, define%
\begin{equation*}
\Theta \left( S\right) _{\lambda ,\mu }^{\Pi E}:=S_{\lambda }\big(\prod_{\nu
\in T\left( \lambda \right) }\left( S_{s\left( \lambda \right) }-S_{\lambda
\nu }S_{\left( \lambda \nu \right) ^{\ast }}\right) \big)S_{\mu ^{\ast }%
\text{.}}
\end{equation*}

Applying the argument of Proposition 3.9 and Proposition 3.11 of \cite{RSY04}
gives the following.

\begin{lemma}
\label{span-of-M}Let $\left\{ S_{\lambda },S_{\mu ^{\ast }}:\lambda ,u\in
\Lambda \right\} $ be a Kumjian-Pask $\Lambda $-family in an $R$-algebra $A$
and $E \subseteq \Lambda$ be finite. For $\left( \lambda ,\mu \right)
,\left( \rho ,\tau \right) \in \Pi E\times _{d,s}\Pi E$, we have
\begin{equation*}
\Theta \left( S\right) _{\lambda ,\mu }^{\Pi E}\Theta \left( S\right) _{\rho
,\tau }^{\Pi E}=\delta _{\mu ,\rho }\Theta \left( S\right) _{\lambda ,\tau
}^{\Pi E}, \quad S_{\lambda }S_{\mu ^{\ast }}=\sum_{\lambda \nu \in \Pi
E}\Theta \left( S\right) _{\lambda \nu ,\mu \nu }^{\Pi E}
\end{equation*}%
and $M_{\Pi E}^{S}$ is spanned by the set $\{\Theta \left( S\right)
_{\lambda ,\mu }^{\Pi E}:\left( \lambda ,\mu \right) \in \Pi E\times
_{d,s}\Pi E\}$.
\end{lemma}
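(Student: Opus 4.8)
The plan is to transcribe the $C^{\ast}$-algebraic arguments of Propositions 3.9 and 3.11 of \cite{RSY04}, verifying at each step that only the relations (KP1--4) and the closure property \eqref{pi-E} of $\Pi E$ are used, never self-adjointness. Throughout I abbreviate the middle idempotent factor of $\Theta(S)_{\lambda,\mu}^{\Pi E}$ by $\Delta_{\lambda}:=\prod_{\nu\in T(\lambda)}(S_{s(\lambda)}-S_{\nu}S_{\nu^{\ast}})$, so that $\Theta(S)_{\lambda,\mu}^{\Pi E}=S_{\lambda}\Delta_{\lambda}S_{\mu^{\ast}}$. First I would record three elementary facts. (F1) Each factor is idempotent and, by Proposition \ref{properties-of-KP}.(a), the factors commute, so $\Delta_{\lambda}$ is idempotent; moreover $\Delta_{\lambda}S_{\nu}=0=S_{\nu^{\ast}}\Delta_{\lambda}$ for $\nu\in T(\lambda)$, since $(S_{s(\lambda)}-S_{\nu}S_{\nu^{\ast}})S_{\nu}=S_{\nu}-S_{\nu}S_{s(\nu)}=0$. (F2) If $\lambda,\mu\in\Pi E$ with $d(\lambda)=d(\mu)$ and $s(\lambda)=s(\mu)$, then $T(\lambda)=T(\mu)$: this is where \eqref{pi-E} enters, applied to the quadruple $(\lambda,\mu,\mu\nu,\mu\nu)$ for $\nu\in T(\mu)$, using $\Lambda^{\min}(\mu,\mu\nu)=\{(\nu,s(\nu))\}$ to conclude $\lambda\nu\in\Pi E$. (F3) For $\mu,\rho\in\Pi E$ and $(\alpha,\beta)\in\Lambda^{\min}(\mu,\rho)$, applying \eqref{pi-E} to $(\mu,\mu,\rho,\rho)$ gives $\mu\alpha,\rho\beta\in\Pi E$, so $\alpha\in T(\mu)$ whenever $d(\alpha)\neq 0$ and $\beta\in T(\rho)$ whenever $d(\beta)\neq 0$.

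With these in hand the first identity follows by expanding $\Theta(S)_{\lambda,\mu}^{\Pi E}\Theta(S)_{\rho,\tau}^{\Pi E}=S_{\lambda}\Delta_{\lambda}(S_{\mu^{\ast}}S_{\rho})\Delta_{\rho}S_{\tau^{\ast}}$ and rewriting $S_{\mu^{\ast}}S_{\rho}=\sum_{(\alpha,\beta)\in\Lambda^{\min}(\mu,\rho)}S_{\alpha}S_{\beta^{\ast}}$ by (KP3). If $\mu\neq\rho$, then no summand has both $d(\alpha)=0$ and $d(\beta)=0$ (that would force $\mu=\mu\alpha=\rho\beta=\rho$), so by (F3) and (F2) either $\Delta_{\lambda}S_{\alpha}=0$ (when $d(\alpha)\neq 0$, using $\alpha\in T(\mu)=T(\lambda)$) or $S_{\beta^{\ast}}\Delta_{\rho}=0$ (when $d(\beta)\neq 0$), killing every term; hence the product is $0=\delta_{\mu,\rho}\Theta(S)_{\lambda,\tau}^{\Pi E}$. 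If $\mu=\rho$, then $S_{\mu^{\ast}}S_{\mu}=S_{s(\mu)}$ collapses the product to $S_{\lambda}\Delta_{\lambda}\Delta_{\mu}S_{\tau^{\ast}}$, and since $T(\lambda)=T(\mu)$ by (F2), fact (F1) gives $\Delta_{\lambda}\Delta_{\mu}=\Delta_{\lambda}$, so the product equals $\Theta(S)_{\lambda,\tau}^{\Pi E}$.

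For the second identity I would reduce it to the completeness relation $\sum_{\lambda\nu\in\Pi E}S_{\nu}\Delta_{\lambda\nu}S_{\nu^{\ast}}=S_{s(\lambda)}$ in the corner at $s(\lambda)$; multiplying this on the left by $S_{\lambda}$ and on the right by $S_{\mu^{\ast}}$ and using $S_{\lambda}S_{\nu}=S_{\lambda\nu}$ and $S_{\nu^{\ast}}S_{\mu^{\ast}}=S_{(\mu\nu)^{\ast}}$ turns it into the stated sum of $\Theta(S)_{\lambda\nu,\mu\nu}^{\Pi E}$, with $(\lambda\nu,\mu\nu)\in\Pi E\times_{d,s}\Pi E$ since $\mu\nu\in\Pi E$ again by \eqref{pi-E}. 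The completeness relation I would prove by induction on the number of proper extensions of $\lambda$ lying in $\Pi E$: the base case $T(\lambda)=\emptyset$ is immediate because then $\Delta_{\lambda}=S_{s(\lambda)}$; for the inductive step I would write $S_{s(\lambda)}=\Delta_{\lambda}+(S_{s(\lambda)}-\Delta_{\lambda})$, expand the complementary idempotent by inclusion--exclusion, and use Proposition \ref{properties-of-KP}.(a) to combine the products $\prod S_{\nu}S_{\nu^{\ast}}$ into sums $\sum S_{\kappa}S_{\kappa^{\ast}}$ over minimal common extensions $\kappa$ (all with $\lambda\kappa\in\Pi E$ by \eqref{pi-E}); each resulting $\lambda\kappa$ has strictly fewer further $\Pi E$-extensions, so the inductive hypothesis applies termwise.

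Finally, the spanning statement is immediate from the first two: the second identity shows every generator $S_{\lambda}S_{\mu^{\ast}}$ of $M_{\Pi E}^{S}$ is an $R$-combination of the $\Theta(S)_{\rho,\tau}^{\Pi E}$, while expanding $\Theta(S)_{\lambda,\mu}^{\Pi E}=S_{\lambda}\Delta_{\lambda}S_{\mu^{\ast}}$ by inclusion--exclusion (again combining range projections via Proposition \ref{properties-of-KP}.(a) and keeping the resulting paths inside $\Pi E$ by \eqref{pi-E}) writes each $\Theta(S)_{\lambda,\mu}^{\Pi E}$ as a combination of elements $S_{\lambda\kappa}S_{(\mu\kappa)^{\ast}}$ with $(\lambda\kappa,\mu\kappa)\in\Pi E\times_{d,s}\Pi E$, so the two spans coincide. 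I expect the main obstacle to be the inductive decomposition in the second identity (equivalently, the spanning inclusion $\operatorname{span}\{\Theta(S)_{\lambda,\mu}^{\Pi E}\}\subseteq M_{\Pi E}^{S}$): the inclusion--exclusion over $T(\lambda)$ entangled with the minimal-common-extension combination of the projections $S_{\nu}S_{\nu^{\ast}}$ is delicate, and its viability rests entirely on the closure property \eqref{pi-E}, which is precisely engineered to guarantee that every path produced along the way still lies in the finite set $\Pi E$.
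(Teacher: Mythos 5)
Your proposal is correct and is essentially the paper's own proof: the paper's entire argument for this lemma is the citation ``applying the argument of Proposition 3.9 and Proposition 3.11 of \cite{RSY04}'', and your facts (F1)--(F3), the (KP3) expansion of $S_{\mu ^{\ast }}S_{\rho }$ in the matrix-unit computation, and the completeness relation $\sum_{\lambda \nu \in \Pi E}S_{\nu }\Delta _{\lambda \nu }S_{\nu ^{\ast }}=S_{s\left( \lambda \right) }$ (proved by induction on $\left\vert T\left( \lambda \right) \right\vert $ using \eqref{pi-E} to keep all produced paths inside $\Pi E$) are precisely that argument transcribed so that only (KP1)--(KP4) and \eqref{pi-E}, never adjoints or norms, are used. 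Note also that your $\Delta _{\lambda }$ tacitly corrects a typo in the paper's displayed definition of $\Theta \left( S\right) _{\lambda ,\mu }^{\Pi E}$: the factor $S_{\lambda \nu }S_{\left( \lambda \nu \right) ^{\ast }}$ there should read $S_{\nu }S_{\nu ^{\ast }}$ as in \cite{RSY04} (equivalently, replace $S_{s\left( \lambda \right) }$ by $S_{\lambda }S_{\lambda ^{\ast }}$ and pull $S_{\lambda }$ outside), since with the printed factors one has $S_{\lambda }S_{\lambda \nu }=0$ whenever $s\left( \lambda \right) \neq r\left( \lambda \right) $, so $\Theta \left( S\right) _{\lambda ,\mu }^{\Pi E}$ would collapse to $S_{\lambda }S_{\mu ^{\ast }}$ and the matrix-unit identity would fail.
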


\begin{lemma}
\label{injectivity-on-M}Let $\Lambda $ be a finitely aligned $k$-graph, $R $
be a commutative ring with $1$ and $E \subseteq \Lambda$ be finite. Suppose
that $\pi :{\normalsize \operatorname{KP}}_{R}\left( \Lambda \right) \rightarrow A$
is an $R$-algebra homomorphism such that $\pi \left( rs_{v}\right) \neq 0$
for all $r\in \left. R\right\backslash \left\{ 0\right\} $ and $v\in \Lambda
^{0}$. Let $\left( \lambda ,\mu \right) \in \Pi E\times _{d,s}\Pi E$. Then
the following conditions are equivalent:

\begin{enumerate}
\item[(a)] $\pi \big(\Theta \left( s\right) _{\lambda ,\mu }^{\Pi E}\big)=0$.

\item[(b)] $\Theta \left( s\right) _{\lambda ,\mu }^{\Pi E}=0$.

\item[(c)] $T\left( \lambda \right) $ is exhaustive.
\end{enumerate}

Furthermore, for $r\in \left. R\right\backslash \left\{ 0\right\} $ we have%
\begin{equation*}
\pi \big(r\Theta \left( s\right) _{\lambda ,\mu }^{\Pi E}\big)=0\text{ if
and only if }r\Theta \left( s\right) _{\lambda ,\mu }^{\Pi E}=0
\end{equation*}%
and $\pi $ is injective on $M_{\Pi E}^{s}$.
\end{lemma}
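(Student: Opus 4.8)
The plan is to establish the cycle of implications (c)$\Rightarrow$(b)$\Rightarrow$(a)$\Rightarrow$(c), then upgrade it to the scalar-weighted statement, and finally bootstrap from a single matrix unit to all of $M_{\Pi E}^{s}$ via the relations in Lemma~\ref{span-of-M}. For (c)$\Rightarrow$(b), the key observation is that $T\left( \lambda \right) \subseteq s\left( \lambda \right) \Lambda $ is finite and, since $d\left( \nu \right) \neq 0$ for each $\nu \in T\left( \lambda \right) $, contains no vertex; hence if $T\left( \lambda \right) $ is exhaustive then $T\left( \lambda \right) \in \FE\left( \Lambda \right) $ with $r\left( T\left( \lambda \right) \right) =s\left( \lambda \right) $. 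Relation (KP4), applied in $\operatorname{KP}_{R}\left( \Lambda \right) $, then forces the middle product $\prod_{\nu \in T\left( \lambda \right) }\left( s_{s\left( \lambda \right) }-s_{\nu }s_{\nu ^{\ast }}\right) $ to vanish, so $\Theta \left( s\right) _{\lambda ,\mu }^{\Pi E}=0$. The implication (b)$\Rightarrow$(a) is immediate on applying $\pi $.

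For (a)$\Rightarrow$(c) I argue the contrapositive. The images $S_{\bullet }:=\pi \left( s_{\bullet }\right) $ form a Kumjian-Pask $\Lambda $-family in $A$, and the hypothesis $\pi \left( rs_{v}\right) =rS_{v}\neq 0$ is precisely what Proposition~\ref{properties-of-KP}.(d) requires. Since $\pi $ is a homomorphism and $\Theta \left( s\right) _{\lambda ,\mu }^{\Pi E}$ is a fixed polynomial expression in the generators, $\pi \big(\Theta \left( s\right) _{\lambda ,\mu }^{\Pi E}\big)=\Theta \left( S\right) _{\lambda ,\mu }^{\Pi E}$. If $T\left( \lambda \right) $ were non-exhaustive, Proposition~\ref{properties-of-KP}.(d) with $G=T\left( \lambda \right) $ and $r=1$ would give $\Theta \left( S\right) _{\lambda ,\mu }^{\Pi E}\neq 0$, contradicting (a); this closes the cycle. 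The weighted statement is the same dichotomy carried through with $r\in \left. R\right\backslash \left\{ 0\right\} $: one direction is trivial, and for the converse, if $T\left( \lambda \right) $ is exhaustive then $\Theta \left( s\right) _{\lambda ,\mu }^{\Pi E}=0$ by (c)$\Rightarrow$(b) so $r\Theta \left( s\right) _{\lambda ,\mu }^{\Pi E}=0$, while if $T\left( \lambda \right) $ is non-exhaustive then Proposition~\ref{properties-of-KP}.(d) with this $r$ gives $\pi \big(r\Theta \left( s\right) _{\lambda ,\mu }^{\Pi E}\big)=r\Theta \left( S\right) _{\lambda ,\mu }^{\Pi E}\neq 0$; thus $\pi \big(r\Theta \left( s\right) _{\lambda ,\mu }^{\Pi E}\big)=0$ forces $T\left( \lambda \right) $ exhaustive and hence $r\Theta \left( s\right) _{\lambda ,\mu }^{\Pi E}=0$.

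For injectivity on $M_{\Pi E}^{s}$, I would invoke Lemma~\ref{span-of-M}, which gives that $M_{\Pi E}^{s}$ is spanned by the $\Theta \left( s\right) _{\lambda ,\mu }^{\Pi E}$ and that these satisfy the matrix-unit relations $\Theta \left( s\right) _{\lambda ,\mu }^{\Pi E}\Theta \left( s\right) _{\rho ,\tau }^{\Pi E}=\delta _{\mu ,\rho }\Theta \left( s\right) _{\lambda ,\tau }^{\Pi E}$. Writing $x=\sum_{\left( \lambda ,\mu \right) }r_{\lambda ,\mu }\Theta \left( s\right) _{\lambda ,\mu }^{\Pi E}\in M_{\Pi E}^{s}$ with $\pi \left( x\right) =0$, I compress by the diagonal elements: for each fixed $\left( \alpha ,\beta \right) \in \Pi E\times _{d,s}\Pi E$ the relations collapse the sum to $\Theta \left( s\right) _{\alpha ,\alpha }^{\Pi E}\,x\,\Theta \left( s\right) _{\beta ,\beta }^{\Pi E}=r_{\alpha ,\beta }\Theta \left( s\right) _{\alpha ,\beta }^{\Pi E}$. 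Applying $\pi $ gives $\pi \big(r_{\alpha ,\beta }\Theta \left( s\right) _{\alpha ,\beta }^{\Pi E}\big)=\pi \big(\Theta \left( s\right) _{\alpha ,\alpha }^{\Pi E}\big)\pi \left( x\right) \pi \big(\Theta \left( s\right) _{\beta ,\beta }^{\Pi E}\big)=0$, and the weighted statement just proved yields $r_{\alpha ,\beta }\Theta \left( s\right) _{\alpha ,\beta }^{\Pi E}=0$ for every $\left( \alpha ,\beta \right) $. Summing back over the index set gives $x=0$, so $\pi $ is injective on $M_{\Pi E}^{s}$.

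I expect the main obstacle to be the weighted statement $\pi \big(r\Theta \left( s\right) _{\lambda ,\mu }^{\Pi E}\big)=0\Rightarrow r\Theta \left( s\right) _{\lambda ,\mu }^{\Pi E}=0$: this is where the two halves of the relations must be balanced, since the exhaustive case is killed by (KP4) while the non-exhaustive case must survive scaling by an arbitrary nonzero $r$, which is exactly the non-vanishing supplied by Proposition~\ref{properties-of-KP}.(d). The other delicate point is confirming that $\Theta \left( s\right) _{\alpha ,\alpha }^{\Pi E}\,x\,\Theta \left( s\right) _{\beta ,\beta }^{\Pi E}$ genuinely isolates the single coefficient $r_{\alpha ,\beta }$, so that no cancellation among distinct matrix units can conceal a nonzero $x$; this relies on a careful application of the orthogonality in Lemma~\ref{span-of-M}.
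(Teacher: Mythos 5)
Your proposal is correct and follows essentially the same approach as the paper: your direct arguments for (c)$\Rightarrow$(b) (via (KP4) applied to $T\left( \lambda \right) \in \FE\left( \Lambda \right)$) and (a)$\Rightarrow$(c) (via Proposition~\ref{properties-of-KP}.(d) applied to the image family $\pi \left( s\right)$) are exactly the \cite{RSY04} arguments the paper invokes, and your weighted statement and diagonal-compression proof of injectivity on $M_{\Pi E}^{s}$ coincide with the paper's. The only cosmetic difference is the final bookkeeping, where you conclude that each term $r_{\alpha ,\beta }\Theta \left( s\right) _{\alpha ,\beta }^{\Pi E}=0$ and sum, while the paper first discards vanishing $\Theta$'s and deduces $r_{\alpha ,\beta }=0$; both are valid.
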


\begin{proof}
By following the argument of Proposition 3.13 and Corollary 3.17 of \cite%
{RSY04}, we have the three equivalent conditions. Now take $\left( \lambda
,\mu \right) \in \Pi E\times _{d,s}\Pi E$ and $r\in \left. R\right\backslash
\left\{ 0\right\} $. If $r\Theta \left( s\right) _{\lambda ,\mu }^{\Pi E}=0$%
, we trivially have $\pi \big(r\Theta \left( s\right) _{\lambda ,\mu }^{\Pi
E}\big)=0$. So suppose $\pi \big(r\Theta \left( s\right) _{\lambda ,\mu
}^{\Pi E}\big)=0$. Since $\pi \left( rs_{v}\right) \neq 0$ for all $r\in
\left. R\right\backslash \left\{ 0\right\} $ and $v\in \Lambda ^{0}$, then
by Remark \ref{properties-of-KP-additional-2}, $\pi \big(r\Theta \left(
s\right) _{\lambda ,\mu }^{\Pi E}\big)=0$ implies that $T\left( \lambda
\right) $ is exhaustive (since $r\neq 0$) and by (c)$\Rightarrow $(b), $%
\Theta \left( s\right) _{\lambda ,\mu }^{\Pi E}=0$. So $r\Theta \left(
s\right) _{\lambda ,\mu }^{\Pi E}=0$, as required.

Next we show that $\pi $ is injective on $M_{\Pi E}^{s}$. Take $a\in M_{\Pi
E}^{s}$ such that $\pi \left( a\right) =0$. We have to show $a=0$. Since $%
a\in M_{\Pi E}^{s}$ and $M_{\Pi E}^{s}=\operatorname{span}_{R}\{\Theta \left(
s\right) _{\lambda ,\mu }^{\Pi E}:\left( \lambda ,\mu \right) \in \Pi
E\times _{d,s}\Pi E\}$ (Lemma \ref{span-of-M}), we write $a=\sum_{\left(
\lambda ,\mu \right) \in F}r_{\lambda ,\mu }\Theta \left( s\right) _{\lambda
,\mu }^{\Pi E}$ where $F\subseteq \Pi E\times _{d,s}\Pi E$ is finite and for
all $\left( \lambda ,\mu \right) \in F$, we have $r_{\lambda ,\mu }\in R$
and $\Theta \left( s\right) _{\lambda ,\mu }^{\Pi E}\neq 0$. If $T\left(
\lambda \right) $ is exhaustive for some $\left( \lambda ,\mu \right) \in F$%
, then by (c)$\Rightarrow $(b), $\Theta \left( s\right) _{\lambda ,\mu
}^{\Pi E}=0$, which contradicts $\Theta \left( s\right) _{\lambda ,\mu
}^{\Pi E}\neq 0$. So $T\left( \lambda \right) $ is non-exhaustive for all $%
\left( \lambda ,\mu \right) \in F$. Since $\pi \left( a\right) =0$, then for
$\left( \rho ,\tau \right) \in F$, we have
\begin{align*}
0& =\pi \big(\Theta \left( s\right) _{\rho ,\rho }^{\Pi E}\big)\pi \left(
a\right) \pi \big(\Theta \left( s\right) _{\tau ,\tau }^{\Pi E}\big) \\
& =\pi \big(\Theta \left( s\right) _{\rho ,\rho }^{\Pi E}\big)\pi \big(%
\sum_{\left( \lambda ,\mu \right) \in F}r_{\lambda ,\mu }\Theta \left(
s\right) _{\lambda ,\mu }^{\Pi E}\big)\pi \big(\Theta \left( s\right) _{\tau
,\tau }^{\Pi E}\big) \\
& =r_{\rho ,\tau }\pi \big(\Theta \left( s\right) _{\rho ,\tau }^{\Pi E}\big)
= r_{\rho ,\tau } \Theta
\left( \pi\left(s\right)\right) _{\rho ,\tau }^{\Pi E}%
\text{ (by Lemma \ref{span-of-M}).}
\end{align*}%
But now since $\pi \left( rs_{v}\right) \neq 0$ for all $r\in \left.
R\right\backslash \left\{ 0\right\} $ and $v\in \Lambda ^{0}$, then by
Remark \ref{properties-of-KP-additional-2}, $r_{\rho ,\tau } \Theta
\left( \pi\left(s\right)\right) _{\rho ,\tau }^{\Pi E}=0$ implies that $r_{\rho ,\tau
}=0$ (since $T\left( \rho \right) $ is non-exhaustive). Therefore, $a=0$ and
$\pi $ is injective on $M_{\Pi E}^{s}$.
\end{proof}

A direct consequence of Lemma \ref{injectivity-on-M} is:

\begin{theorem}
\label{injectivity-on-the core}Let $\Lambda $ be a finitely aligned $k$%
-graph and $R$ be a commutative ring with $1$. Suppose that $\pi :%
{\normalsize \operatorname{KP}}_{R}\left( \Lambda \right) \rightarrow A$ is an $R$%
-algebra homomorphism such that $\pi \left( rs_{v}\right) \neq 0$ for all $%
r\in \left. R\right\backslash \left\{ 0\right\} $ and $v\in \Lambda ^{0}$.
Then $\pi $ is injective on ${\normalsize \operatorname{KP}}_{R}\left( \Lambda
\right) _{0}$.
\end{theorem}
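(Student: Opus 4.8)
The plan is to deduce this statement directly from Lemma~\ref{injectivity-on-M} by observing that every element of the core $\operatorname{KP}_R(\Lambda)_0$ already lives inside one of the subalgebras $M_{\Pi E}^s$ for a suitable finite set $E$. Since $\pi$ is known to be injective on each such $M_{\Pi E}^s$, the result will follow once we locate the relevant $E$.

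First I would take $a \in \operatorname{KP}_R(\Lambda)_0$ with $\pi(a)=0$ and aim to show $a=0$. By the definition of the core, together with the observation in the proof of Proposition~\ref{properties-of-KP}.(b) that $s_\lambda s_{\mu^*}\neq 0$ forces $s(\lambda)=s(\mu)$, I can discard the vanishing summands and write
\[
a=\sum_{i=1}^{m} r_i\, s_{\lambda_i} s_{\mu_i^*},
\]
where each $r_i \in R$ and each pair satisfies both $d(\lambda_i)=d(\mu_i)$ and $s(\lambda_i)=s(\mu_i)$.

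Next I would set $E:=\{\lambda_i,\mu_i : 1\le i\le m\}$, a finite subset of $\Lambda$, so that $E \subseteq \Pi E$ with $\Pi E$ finite. For each $i$ we then have $\lambda_i,\mu_i \in \Pi E$ with $d(\lambda_i)=d(\mu_i)$ and $s(\lambda_i)=s(\mu_i)$, that is, $(\lambda_i,\mu_i)\in \Pi E\times_{d,s}\Pi E$. Consequently $a\in M_{\Pi E}^s$. Since $\pi(rs_v)\neq 0$ for all $r\in R\setminus\{0\}$ and $v\in\Lambda^0$ by hypothesis, Lemma~\ref{injectivity-on-M} says $\pi$ is injective on $M_{\Pi E}^s$; hence $\pi(a)=0$ gives $a=0$, as wanted.

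The argument is short precisely because all the substantive work has already been done in Lemma~\ref{injectivity-on-M} (and upstream in Lemma~\ref{span-of-M}); the only genuine point to verify here is that the finitely many paths appearing in a given core element can be captured by a single finite set $E$, and hence by $\Pi E\times_{d,s}\Pi E$. Thus the main obstacle does not lie in this theorem itself but earlier, in establishing the matrix-unit relations for the elements $\Theta(s)_{\lambda,\mu}^{\Pi E}$ and the characterisation of when they vanish, which is exactly what lets injectivity be tested term by term on each $M_{\Pi E}^s$.
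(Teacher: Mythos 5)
Your proof is correct and is essentially the paper's own argument: write the core element as a finite $R$-combination of $s_{\lambda}s_{\mu^{\ast}}$ with $d(\lambda)=d(\mu)$, take $E$ to be the finite set of paths appearing, observe $a\in M_{\Pi E}^{s}$, and apply Lemma~\ref{injectivity-on-M}. If anything, you are slightly more careful than the paper, since you explicitly note (via Proposition~\ref{properties-of-KP}.(b)) that nonzero summands also satisfy $s(\lambda_i)=s(\mu_i)$, which is needed for $(\lambda_i,\mu_i)$ to lie in $\Pi E\times_{d,s}\Pi E$.
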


\begin{proof}
Take $a\in {\normalsize \operatorname{KP}}_{R}\left( \Lambda \right) _{0}$ such that
$\pi \left( a\right) =0$. We have to show $a=0$. Write $a=\sum_{\left(
\lambda ,\mu \right) \in F}r_{\lambda ,\mu }s_{\lambda }s_{\mu ^{\ast }}$
with $d\left( \lambda \right) =d\left( \mu \right) $ for $\left( \lambda
,\mu \right) \in F$. Define $E:=\left\{ \lambda ,\mu :\left( \lambda ,\mu
\right) \in F\right\} $ and then $a\in M_{\Pi E}^{s}$. Since $\pi $ is
injective on $M_{\Pi E}^{s}$ (Lemma \ref{injectivity-on-M}), $a=0$.
\end{proof}

Now we establish the last stepping stone result before proving Theorem \ref%
{the-graded-uniqueness-theorem}.

\begin{lemma}
\label{generator-of-ideal}Let $I$ be a graded ideal of ${\normalsize \operatorname{KP%
}}_{R}\left( \Lambda \right) $. Then $I$ is generated as an ideal by the set
$I_{0}:=I\cap {\normalsize \operatorname{KP}}_{R}\left( \Lambda \right) _{0}$.
\end{lemma}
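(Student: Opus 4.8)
The inclusion $\langle I_{0}\rangle \subseteq I$ is immediate, since $I_{0}\subseteq I$ and $I$ is an ideal. For the reverse inclusion, the plan is to use that $I$ is graded: by definition $I=\bigoplus_{n\in \mathbb{Z}^{k}}\big(I\cap \operatorname{KP}_{R}(\Lambda)_{n}\big)$, so every element of $I$ is a finite sum of homogeneous elements lying in $I$, and it suffices to show that each homogeneous $a\in I\cap \operatorname{KP}_{R}(\Lambda)_{n}$ belongs to $\langle I_{0}\rangle$. My strategy is to conjugate $a$ down into degree $0$ and then recover it, using only the orthogonality of same-degree diagonal idempotents (so that row-finiteness is never needed).

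So I would fix such an $a$ and, using Proposition \ref{properties-of-KP}.(b), write $a=\sum_{i=1}^{m}r_{i}s_{\lambda_{i}}s_{\mu_{i}^{\ast}}$ with $r_{i}\in R$ and $d(\lambda_{i})-d(\mu_{i})=n$ for each $i$. Set $p:=n\vee 0$ and $q:=(-n)\vee 0$ in $\mathbb{N}^{k}$, so that $n=p-q$. Since $d(\lambda_{i})=d(\mu_{i})+n\geq n$ and $d(\lambda_{i})\geq 0$, I get $d(\lambda_{i})\geq p$; similarly $d(\mu_{i})\geq q$. Thus I may factor $\lambda_{i}=\alpha_{i}\lambda_{i}'$ and $\mu_{i}=\beta_{i}\mu_{i}'$, where $\alpha_{i}:=\lambda_{i}(0,p)$ and $\beta_{i}:=\mu_{i}(0,q)$ have degrees $p$ and $q$ respectively.

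The engine of the proof is the following observation. By Remark \ref{properties-of-KP-additional}, if $\alpha,\alpha'\in \Lambda^{p}$ then $s_{\alpha^{\ast}}s_{\alpha'}=\delta_{\alpha,\alpha'}s_{s(\alpha)}$, so $\{s_{\alpha}s_{\alpha^{\ast}}:\alpha\in \Lambda^{p}\}$ is a family of mutually orthogonal idempotents. Let $\mathcal{A}$ be the finite set of distinct paths among $\alpha_{1},\dots,\alpha_{m}$ and put $P:=\sum_{\alpha\in \mathcal{A}}s_{\alpha}s_{\alpha^{\ast}}$, which is then an idempotent. Using $s_{\alpha^{\ast}}s_{\lambda_{i}}=s_{\alpha^{\ast}}s_{\alpha_{i}}s_{\lambda_{i}'}=\delta_{\alpha,\alpha_{i}}s_{\lambda_{i}'}$ and (KP2), a short computation gives $Ps_{\lambda_{i}}=s_{\alpha_{i}}s_{\lambda_{i}'}=s_{\lambda_{i}}$ for every $i$, hence $Pa=a$. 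Defining $Q:=\sum_{\beta\in \mathcal{B}}s_{\beta}s_{\beta^{\ast}}$ over the distinct $\beta_{i}$ in the same way yields $aQ=a$. Combining these,
\[
a=PaQ=\sum_{\alpha\in \mathcal{A}}\sum_{\beta\in \mathcal{B}}s_{\alpha}\big(s_{\alpha^{\ast}}a s_{\beta}\big)s_{\beta^{\ast}}.
\]
Each middle factor $s_{\alpha^{\ast}}a s_{\beta}$ has degree $-p+n+q=0$ and lies in $I$ (as $a\in I$ and $I$ is an ideal), so $s_{\alpha^{\ast}}a s_{\beta}\in I_{0}$. Therefore $a$ is a sum of terms $s_{\alpha}x s_{\beta^{\ast}}$ with $x\in I_{0}$, i.e.\ $a\in \langle I_{0}\rangle$, as required.

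The step I expect to require the most care — and the only place where the finitely aligned rather than row-finite setting intervenes — is the recovery $a=PaQ$. In the row-finite, no-sources case one would simply take $P=\sum_{\alpha\in v\Lambda^{p}}s_{\alpha}s_{\alpha^{\ast}}=s_{v}$, but no such summation relation is available here. The fix, which I believe is the crux, is to sum only over the \emph{finitely many} initial segments $\alpha_{i},\beta_{i}$ that actually occur in the chosen expansion of $a$, and to lean on their mutual orthogonality (from Remark \ref{properties-of-KP-additional}); this is exactly what makes $P$ and $Q$ idempotents that fix $a$ on the left and right. I would double-check that $\mathcal{A},\mathcal{B}$ depend on the expansion but not on any exhaustiveness hypothesis, so that the argument applies verbatim to every graded ideal.
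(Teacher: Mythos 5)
Your proof is correct and follows essentially the same route as the paper: reduce to a homogeneous $a\in I\cap {\normalsize \operatorname{KP}}_{R}\left( \Lambda \right) _{n}$, factor each $\lambda _{i}$ and $\mu _{i}$ at degrees $p=n\vee 0$ and $q=\left( -n\right) \vee 0$, use the same-degree orthogonality $s_{\nu ^{\ast }}s_{\gamma }=\delta _{\nu ,\gamma }s_{s\left( \nu \right) }$ of Remark \ref{properties-of-KP-additional} to see that each $s_{\alpha ^{\ast }}as_{\beta }$ lies in $I_{0}$, and reassemble $a$ as $\sum_{\alpha ,\beta }s_{\alpha }\left( s_{\alpha ^{\ast }}as_{\beta }\right) s_{\beta ^{\ast }}$. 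Your packaging of the reassembly through the idempotents $P$ and $Q$ is only cosmetically different from the paper's direct summation over the occurring pairs of initial segments $\left( \alpha _{1},\beta _{1}\right) $, though your explicit choice of $p$ and $q$ together with the verification $d\left( \lambda _{i}\right) \geq p$, $d\left( \mu _{i}\right) \geq q$ pins down a point the paper leaves implicit when it merely writes $n=n_{1}-n_{2}$ with $n_{1},n_{2}\in \mathbb{N}^{k}$.
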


\begin{proof}
We generalise the argument of \cite[Lemma 5.1]{T11}. Take $n\in \mathbb{Z}%
^{k}$ and write $n=n_{1}-n_{2}$ such that $n_{1},n_{2}\in \mathbb{N}^{k}$.
We show that $I_{n}:=I\cap {\normalsize \operatorname{KP}}_{R}\left( \Lambda \right)
_{n}$ is contained in ${\normalsize \operatorname{KP}}_{R}\left( \Lambda \right)
_{n_{1}}I_{0}{\normalsize \operatorname{KP}}_{R}\left( \Lambda \right) _{n_{2}}$.
Now take $a\in I_{n}$ and write $a=\sum_{\left( \lambda ,\mu \right) \in
F}r_{\lambda ,\mu }s_{\lambda }s_{\mu ^{\ast }}$. Note that $d\left( \lambda
\right) -d\left( \mu \right) =n$ for $\left( \lambda ,\mu \right) \in F$.
Since $n=n_{1}-n_{2}$ with $n_{1},n_{2}\in \mathbb{N}^{k}$, then for every $%
\left( \lambda ,\mu \right) \in F$, by the factorisation property, there
exist $\lambda _{1},\lambda _{2},\mu _{1},\mu _{2}$ such that%
\begin{equation*}
\lambda =\lambda _{1}\lambda _{2}\text{, }\mu =\mu _{1}\mu _{2}\text{, }%
d\left( \lambda _{1}\right) =n_{1}\text{, }d\left( \mu _{1}\right) =n_{2}%
\text{, and }d\left( \lambda _{2}\right) =d\left( \mu _{2}\right) \text{.}
\end{equation*}%
Hence $a=\sum_{\left( \lambda _{1}\lambda _{2},\mu _{1}\mu _{2}\right) \in
F}r_{\lambda _{1}\lambda _{2},\mu _{1}\mu _{2}}s_{\lambda _{1}}\left(
s_{\lambda _{2}}s_{\mu _{2}^{\ast }}\right) s_{\mu _{1}^{\ast }}$. Take $%
\left( \alpha _{1}\alpha _{2},\beta _{1}\beta _{2}\right) \in F$. \ Note
that for $\nu ,\gamma \in \Lambda $ with $d\left( \nu \right) =d\left(
\gamma \right) $, by Remark \ref{properties-of-KP-additional}, we have $%
s_{\nu ^{\ast }}s_{\gamma }=0$ if $\nu \neq \gamma $. Then%
\begin{align*}
s_{\alpha _{1}^{\ast }}as_{\beta _{1}}& =\sum_{\left( \lambda _{1}\lambda
_{2},\mu _{1}\mu _{2}\right) \in F}r_{\lambda _{1}\lambda _{2},\mu _{1}\mu
_{2}}\left( s_{\alpha _{1}^{\ast }}s_{\lambda _{1}}\right) \left( s_{\lambda
_{2}}s_{\mu _{2}^{\ast }}\right) \left( s_{\mu _{1}^{\ast }}s_{\beta
_{1}}\right) \\
& =\sum_{\left( \alpha _{1}\lambda _{2},\beta _{1}\mu _{2}\right) \in
F}r_{\alpha _{1}\lambda _{2},\beta _{1}\mu _{2}}s_{\lambda _{2}}s_{\mu
_{2}^{\ast }}
\end{align*}%
since $d\left( \alpha _{1}\right) =n_{1}=d\left( \lambda _{1}\right) $ and $%
d\left( \beta _{1}\right) =n_{2}=d\left( \mu _{1}\right) $ for $\left(
\lambda _{1}\lambda _{2},\mu _{1}\mu _{2}\right) \in F$. Since $a\in I$,
then $s_{\alpha _{1}^{\ast }}as_{\beta _{1}}\in I$. Furthermore, since $%
d\left( \lambda _{2}\right) =d\left( \mu _{2}\right) $ for $\left( \alpha
_{1}\lambda _{2},\beta _{1}\mu _{2}\right) \in F$, then $s_{\alpha
_{1}^{\ast }}as_{\beta _{1}}\in {\normalsize \operatorname{KP}}_{R}\left( \Lambda
\right) _{0}$. Hence, for $\left( \alpha _{1}\alpha _{2},\beta _{1}\beta
_{2}\right) \in F$,
\begin{equation*}
\sum_{\left( \alpha _{1}\lambda _{2},\beta _{1}\mu _{2}\right) \in
F}r_{\alpha _{1}\lambda _{2},\beta _{1}\mu _{2}}s_{\lambda _{2}}s_{\mu
_{2}^{\ast }}=s_{\alpha _{1}^{\ast }}as_{\beta _{1}}\in I_{0}
\end{equation*}%
and%
\begin{equation*}
\sum_{\left( \alpha _{1}\lambda _{2},\beta _{1}\mu _{2}\right) \in
F}r_{\alpha _{1}\lambda _{2},\beta _{1}\mu _{2}}s_{\alpha _{1}\lambda
_{2}}s_{\left( \beta _{1}\mu _{2}\right) ^{\ast }}=s_{\alpha _{1}}\left(
s_{\alpha _{1}^{\ast }}as_{\beta _{1}}\right) s_{\beta _{1}^{\ast }}\in
{\normalsize \operatorname{KP}}_{R}\left( \Lambda \right) _{n_{1}}I_{0}{\normalsize
\operatorname{KP}}_{R}\left( \Lambda \right) _{n_{2}}\text{.}
\end{equation*}%
Therefore,%
\begin{align*}
a& =\sum_{\left( \lambda _{1}\lambda _{2},\mu _{1}\mu _{2}\right) \in
F}r_{\lambda _{1}\lambda _{2},\mu _{1}\mu _{2}}s_{\lambda _{1}\lambda
_{2}}s_{\left( \mu _{1}\mu _{2}\right) ^{\ast }} \\
& =\sum_{\left\{ \left( \alpha _{1},\beta _{1}\right) :\left( \alpha
_{1}\alpha _{2},\beta _{1}\beta _{2}\right) \in F\right\} }\sum_{\left(
\alpha _{1}\lambda _{2},\beta _{1}\mu _{2}\right) \in F}r_{\alpha
_{1}\lambda _{2},\beta _{1}\mu _{2}}s_{\alpha _{1}\lambda _{2}}s_{\left(
\beta _{1}\mu _{2}\right) ^{\ast }}
\end{align*}%
belongs to ${\normalsize \operatorname{KP}}_{R}\left( \Lambda \right) _{n_{1}}I_{0}%
{\normalsize \operatorname{KP}}_{R}\left( \Lambda \right) _{n_{2}}$, and $%
I_{n}\subseteq {\normalsize \operatorname{KP}}_{R}\left( \Lambda \right)
_{n_{1}}I_{0}{\normalsize \operatorname{KP}}_{R}\left( \Lambda \right) _{n_{2}}$.

Now since $I$ is a graded ideal $I=\bigoplus_{n\in \mathbb{Z}^{k}}I_{n}$ and
$I$ is generated as an ideal by $I_{0}$.
\end{proof}

\begin{proof}[Proof of Theorem \protect\ref{the-graded-uniqueness-theorem}]
Because $\pi$ is graded, we have that $\ker \pi$ is a graded ideal. By Lemma %
\ref{generator-of-ideal}, the ideal $\ker \pi $ is generated by the set $%
\ker \pi \cap {\normalsize \operatorname{KP}}_{R}\left( \Lambda \right) _{0}$. Thus
it suffices to show $\pi |_{{\normalsize \operatorname{KP}}_{R}\left( \Lambda
\right) _{0}}:{\normalsize \operatorname{KP}}_{R}\left( \Lambda \right)
_{0}\rightarrow A$ is injective. However, the injectivity follows from
Theorem \ref{injectivity-on-the core}.
\end{proof}

One immediate application of Theorem \ref{the-graded-uniqueness-theorem} is:

\begin{proposition}
\label{KP-is-dense-in-C}Let $\Lambda $ be a finitely aligned $k$-graph. Let $%
\left\{ s_{\lambda },s_{\mu ^{\ast }}:\lambda ,u\in \Lambda \right\} $ be
the universal Kumjian-Pask $\Lambda $-family for $R=\mathbb{C}$ and $\left\{
t_{\lambda }:\lambda \in \Lambda \right\} $ be the universal Cuntz-Krieger $%
\Lambda $-family. Then there is an isomorphism $\pi _{t}:{\normalsize \operatorname{%
KP}}_{\mathbb{C}}\left( \Lambda \right) \rightarrow \operatorname{span}_{\mathbb{C}%
}\left\{ t_{\lambda }t_{\mu }^{\ast }:\lambda ,\mu \in \Lambda \right\} $
such that $\pi _{t}\left( s_{\lambda }\right) =t_{\lambda }$ and $\pi
_{t}\left( s_{\mu ^{\ast }}\right) =t_{\mu }^{\ast }$ for $\lambda ,u\in
\Lambda $. In particular, ${\normalsize \operatorname{KP}}_{\mathbb{C}}\left(
\Lambda \right) $ is isomorphic to a dense subalgebra of $C^{\ast }\left(
\Lambda \right) $.
\end{proposition}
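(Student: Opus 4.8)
The plan is to realize $\pi_t$ via the universal property of $\operatorname{KP}_{\CC}(\Lambda)$ and then force injectivity with the graded-uniqueness theorem. First I would observe that, since $\{t_\lambda:\lambda\in\Lambda\}$ is a Cuntz-Krieger $\Lambda$-family in $C^{\ast}(\Lambda)$, setting $S_\lambda:=t_\lambda$ and $S_{\mu^{\ast}}:=t_\mu^{\ast}$ produces a Kumjian-Pask $\Lambda$-family in the $\CC$-algebra $C^{\ast}(\Lambda)$: relations (KP1)--(KP4) are precisely the translations of (TCK1)--(TCK3) and (CK) under the identification $S_{\mu^{\ast}}=t_\mu^{\ast}$, exactly as discussed before Theorem \ref{the-graded-uniqueness-theorem}. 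Theorem \ref{universal-KP-family}.(a) then supplies a unique $\CC$-algebra homomorphism $\pi_t:\operatorname{KP}_{\CC}(\Lambda)\to C^{\ast}(\Lambda)$ with $\pi_t(s_\lambda)=t_\lambda$ and $\pi_t(s_{\mu^{\ast}})=t_\mu^{\ast}$ for all $\lambda,\mu\in\Lambda$.

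Next I would identify the image. By Proposition \ref{properties-of-KP}.(b) applied to the universal family, $\operatorname{KP}_{\CC}(\Lambda)=\operatorname{span}_{\CC}\{s_\lambda s_{\mu^{\ast}}\}$, and $\pi_t(s_\lambda s_{\mu^{\ast}})=t_\lambda t_\mu^{\ast}$; hence the image of $\pi_t$ is exactly $\operatorname{span}_{\CC}\{t_\lambda t_\mu^{\ast}:\lambda,\mu\in\Lambda\}$. Thus $\pi_t$ is surjective onto this subalgebra, and it remains only to prove that $\pi_t$ is injective.

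For injectivity I would apply the graded-uniqueness theorem. I equip $B:=\operatorname{span}_{\CC}\{t_\lambda t_\mu^{\ast}\}$ with the grading induced by the gauge action $\gamma:\TT^k\to\operatorname{Aut}(C^{\ast}(\Lambda))$ determined by $\gamma_z(t_\lambda)=z^{d(\lambda)}t_\lambda$: since $\gamma_z(t_\lambda t_\mu^{\ast})=z^{d(\lambda)-d(\mu)}t_\lambda t_\mu^{\ast}$, the standard Fourier-coefficient (spectral-subspace) argument shows $B=\bigoplus_{n\in\ZZ^k}B_n$ with $B_n=\operatorname{span}_{\CC}\{t_\lambda t_\mu^{\ast}:d(\lambda)-d(\mu)=n\}$, so that $B$ is a $\ZZ^k$-graded $\CC$-algebra. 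By the description of $\operatorname{KP}_{\CC}(\Lambda)_n$ in Theorem \ref{universal-KP-family}.(c), $\pi_t$ carries $\operatorname{KP}_{\CC}(\Lambda)_n$ onto $B_n$, so $\pi_t:\operatorname{KP}_{\CC}(\Lambda)\to B$ is $\ZZ^k$-graded. Moreover $\pi_t(rs_v)=r\,t_v$, and since the universal Cuntz-Krieger family satisfies $t_v\neq 0$ for every $v\in\Lambda^0$ (for instance via the boundary-path representation of \cite{RSY04}) and $\CC$ is a field, we get $\pi_t(rs_v)\neq 0$ for all $r\in\CC\setminus\{0\}$ and $v\in\Lambda^0$. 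Theorem \ref{the-graded-uniqueness-theorem} then yields injectivity of $\pi_t$, so $\pi_t$ is an isomorphism onto $B$. Density is immediate, because $C^{\ast}(\Lambda)$ is by definition the closure of the $\ast$-algebra generated by $\{t_\lambda\}$, which is exactly $B$.

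The main obstacle I anticipate is the grading bookkeeping on the $C^{\ast}$-side: one must confirm that $B$ is an internal direct sum of its spectral subspaces (so that it qualifies as a $\ZZ^k$-graded ring) and that $\pi_t$ respects this grading. Both are routine consequences of the gauge action, but they are the conceptual crux, since everything else reduces to a formal application of the universal property together with the nonvanishing $t_v\neq 0$ that triggers the graded-uniqueness theorem.
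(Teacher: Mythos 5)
Your proposal is correct and follows essentially the same route as the paper: both obtain $\pi_t$ from the universal property of $\operatorname{KP}_{\CC}(\Lambda)$ applied to the Kumjian-Pask family $\{t_\lambda, t_\mu^*\}$, grade the image $\operatorname{span}_{\CC}\{t_\lambda t_\mu^*\}$ by the spectral subspaces $A_n$ of the gauge action (the paper invokes the argument of \cite[Lemma 7.4]{ACaHR13} for the direct-sum property, which is exactly your Fourier-coefficient argument), and then conclude injectivity from Theorem \ref{the-graded-uniqueness-theorem}. If anything, you are slightly more explicit than the paper in verifying the hypothesis $\pi_t(rs_v)=rt_v\neq 0$ for $r\in\CC\setminus\{0\}$, which the paper uses implicitly.
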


\begin{proof}
Since $\left\{ t_{\lambda }:\lambda \in \Lambda \right\} $ satisfies
(TCK1-3) and (CK), then $\left\{ t_{\lambda },t_{\mu }^{\ast }:\lambda ,\mu
\in \Lambda \right\} $ also satisfies (KP1-4) and is a Kumjian-Pask $\Lambda
$-family in $C^{\ast }\left( \Lambda \right) $. Thus the universal property
of ${\normalsize \operatorname{KP}}_{\mathbb{C}}\left( \Lambda \right) $ gives a
homomorphism $\pi _{t}$ from ${\normalsize \operatorname{KP}}_{\mathbb{C}}\left(
\Lambda \right) $ onto the dense subalgebra%
\begin{equation*}
A:=\operatorname{span}_{\mathbb{C}}\left\{ t_{\lambda }t_{\mu }^{\ast }:\lambda ,\mu
\in \Lambda \right\}
\end{equation*}%
of $C^{\ast }\left( \Lambda \right) $.

Next we show the injectivity of $\pi _{t}$. By Theorem \ref%
{the-graded-uniqueness-theorem}, it suffices to show that $\pi _{t}$ is a $%
\mathbb{Z}^{k}$-graded algebra homomorphism. We claim that $A$ is graded by%
\begin{equation*}
A_{n}:=\operatorname{span}_{\mathbb{C}}\left\{ t_{\lambda }t_{\mu }^{\ast }:\lambda
,\mu \in \Lambda ,d\left( \lambda \right) -d\left( \mu \right) =n\right\}
\text{.}
\end{equation*}%
Note that for $\lambda ,\mu ,\rho ,\tau \in \Lambda $ with $d\left( \lambda
\right) -d\left( \mu \right) =n$ and $d\left( \rho \right) -d\left( \tau
\right) =m$, we have%
\begin{align*}
t_{\lambda }t_{\mu }^{\ast }t_{\rho }t_{\tau }^{\ast }& =t_{\lambda }\big(%
\sum_{(\mu ^{\prime },\rho ^{\prime })\in \Lambda ^{\min }\left( \mu ,\rho
\right) }t_{\mu ^{\prime }}t_{\rho ^{\prime }}^{\ast }\big)t_{\tau }^{\ast }%
\text{ (by (TCK3))} \\
& =\sum_{(\mu ^{\prime },\rho ^{\prime })\in \Lambda ^{\min }\left( \mu
,\rho \right) }t_{\lambda \mu ^{\prime }}t_{\tau \rho ^{\prime }}^{\ast }
\end{align*}%
and for $(\mu ^{\prime },\rho ^{\prime })\in \Lambda ^{\min }\left( \mu
,\rho \right) $,%
\begin{align*}
d\left( \lambda \mu ^{\prime }\right) -d\left( \tau \rho ^{\prime }\right) &
=d\left( \lambda \right) +d\left( \mu ^{\prime }\right) -d\left( \tau
\right) -d\left( \rho ^{\prime }\right) \\
& =d\left( \lambda \right) +\left( d\left( \mu \right) \vee d\left( \rho
\right) -d\left( \mu \right) \right) \\
& \quad -d\left( \tau \right) -\left( d\left( \mu \right) \vee d\left( \rho
\right) -d\left( \rho \right) \right) \\
& =\left( d\left( \lambda \right) -d\left( \mu \right) \right) -\left(
d\left( \tau \right) -d\left( \rho \right) \right) \\
& =n+m\text{.}
\end{align*}%
Hence $A_{n}A_{m}\subseteq A_{n+m}$. Since each spanning element $t_{\lambda
}t_{\mu }^{\ast }$ belongs to $A_{d\left( \lambda \right) -d\left( \mu
\right) }$, every element $a$ of $A$ can be written as a finite sum $\sum
a_{n}$ with $a_{n}\in A_{n}$. For $a_{n}\in A_{n}$ such that a finite sum $%
\sum a_{n}=0$, then we have each $a_{n}=0$ by following the argument of \cite%
[Lemma 7.4]{ACaHR13}. Thus $\left\{ A_{n}:n\in \mathbb{Z}^{k}\right\} $ is a
grading of $A$, as claimed. Then $\pi _{t}$ is a $\mathbb{Z}^{k}$-graded and
by Theorem \ref{the-graded-uniqueness-theorem}, $\pi _{t}$ is injective.
\end{proof}

\section{Steinberg algebras}

\label{Section-Steinberg-algebra}Steinberg algebras were introduced by
Steinberg in \cite{St10} and are algebraic analogues of groupoid $C^{\ast}$%
-algebras. In \cite{CS15}, Clark and Sims show that for every $1$-graph $E$,
its Leavitt path algebra is isomorphic to a Steinberg algebra. In this
section, we show that for every finitely aligned $k$-graph $\Lambda $, its
Kumjian-Pask algebra is isomorphic to a Steinberg algebra (Proposition \ref%
{KP-is-isomorphic-to-Steinberg-algebras}). We start out with an introduction
to groupoids and Steinberg algebras in general.

A groupoid $\mathcal{G}$ is a small category in which every morphism has an
inverse. For a groupoid $\mathcal{G}$, we write $r\left( a\right) $ and $%
s\left( a\right) $ to denote the \emph{range} and \emph{source} of $a\in
\mathcal{G}$. Because $r\left( a\right) =s\left( a^{-1}\right) $ for $a\in
\mathcal{G}$, then $r$ and $s$ have the common image. We call this common
image the \emph{unit space }of $\mathcal{G}$ and denote it $\mathcal{G}%
^{\left( 0\right) }$. A pair $\left( a,b\right) \in \mathcal{G}\times
\mathcal{G}$ is said \emph{composable }if $s\left( a\right) =r\left(
b\right) $. We then use notation $\mathcal{G}^{\left( 2\right) }$ to denote
the collection of composable pairs in $\mathcal{G}$. For $A,B\subseteq
\mathcal{G}$, we write%
\begin{equation*}
AB:=\left\{ ab:a\in A,b\in B,\left( a,b\right) \in \mathcal{G}^{\left(
2\right) }\right\} \text{.}
\end{equation*}

We say $\mathcal{G}$ is a \emph{topological groupoid} if $\mathcal{G}$ is
endowed with a topology such that composition and inversion on $\mathcal{G}$
are continuous. We also call an open set $U\subseteq \mathcal{G}$ an \emph{%
open bisection }if $s$ and $r$ restricted to $U$ are homeomorhisms into $%
\mathcal{G}^{(0)}$. Finally, we call $\mathcal{G}$ \emph{ample} if $\mathcal{%
G}$ has a basis of compact open bisections.

\begin{remark}
\label{remark-of-groupouid-Glambda-totally-disconnected} Note that if $%
\mathcal{G}$ is ample, then $\mathcal{G}$ is locally compact and \'{e}tale.
In fact, $\mathcal{G}$ is Hausdorff ample if and only if $\mathcal{G}$ is
locally compact, Hausdorff and \'etale with totally disconnected unit space.
\end{remark}

Now suppose that $\mathcal{G}$ is a Hausdorff ample groupoid and $R$ is a
commutative ring with $1$. As in \cite[Section 2.2]{CE-M15}, the Steinberg
algebra\footnote{%
In \cite{St10}, Steinberg writes $R\mathcal{G}$ to denote $A_{R}(\mathcal{G}%
) $.} associated to $\mathcal{G}$ is
\begin{equation*}
A_{R}\left( \mathcal{G}\right) :=\{f:\mathcal{G}\rightarrow R:f\text{ is
locally constant and has compact support}\}
\end{equation*}%
where addition and scalar multiplication are defined pointwise, and
convolution is given by%
\begin{equation*}
\left( f\star g\right) \left( a\right) :=\sum_{r\left( a\right) =r\left(
b\right) }f\left( b\right) g\left( b^{-1}a\right) \text{.}
\end{equation*}%
Furthermore, for compact open bisections $U$ and $V$, we have the
characteristic function $1_U \in A_R(\mathcal{G})$ and
\begin{equation*}
1_{U}\star 1_{V}=1_{UV}
\end{equation*}%
\cite[Proposition 4.3]{St10}. Note that for $f\in A_{R}\left( \mathcal{G}%
\right) $, $\operatorname{supp}\left( f\right) $ is clopen (\cite[Remark 2.1]{CE-M15}%
).

\begin{example}
\label{groupouid-Glambda}To each finitely aligned $k$-graph $\Lambda $, we
define the associated \emph{boundary-path groupoid} $\mathcal{G}_{\Lambda }$
from \cite[Definition 4.8]{Y07} as follows. Write%
\begin{equation*}
\Lambda \ast _{s}\Lambda :=\left\{ \left( \lambda ,\mu \right) \in \Lambda
\times \Lambda :s\left( \lambda \right) =s\left( \mu \right) \right\} \text{.%
}
\end{equation*}%
The objects of $\mathcal{G}_{\Lambda}$ are
\begin{equation*}
\operatorname{Obj}\left( \mathcal{G}_{\Lambda }\right) :=\partial \Lambda \text{.}
\end{equation*}%
The morphisms are
\begin{align*}
\operatorname{Mor}\left( \mathcal{G}_{\Lambda }\right) & :=\{\left( \lambda
z,d\left( \lambda \right) -d\left( \mu \right) ,\mu z\right) \in \partial
\Lambda \times \mathbb{Z}^{k}\times \partial \Lambda : \\
& \quad \quad \quad \quad \quad \quad \left( \lambda ,\mu \right) \in
\Lambda \ast _{s}\Lambda ,z\in s\left( \lambda \right) \partial \Lambda \} \\
& =\{\left( x,m,y\right) \in \partial \Lambda \times \mathbb{Z}^{k}\times
\partial \Lambda :\text{there exists }p,q\in \mathbb{N}^{k}\text{ such that}
\\
& \quad \quad \quad \quad \quad \quad p\leq d\left( x\right) ,q\leq d\left(
y\right) ,p-q=m\text{ and }\sigma ^{p}x=\sigma ^{q}y\}\text{.}
\end{align*}%
The range and source maps are given by $r\left( x,m,y\right) :=x$ and $%
s\left( x,m,y\right) :=y$, and composition is defined such that%
\begin{equation*}
\left( \left( x_{1},m_{1},y_{1}\right) ,\left( y_{1},m_{2},y_{2}\right)
\right) \mapsto \left( x_{1},m_{1}+m_{2},y_{2}\right) \text{.}
\end{equation*}%
Fianlly inversion is given by $\left( x,m,y\right) \mapsto \left(
y,-m,x\right) $.

Next, we show how to realise $\mathcal{G}_{\Lambda}$ as a topological
groupoid. For $\left( \lambda ,\mu \right) \in \Lambda \ast _{s}\Lambda $
and finite non-exhaustive subset $G\subseteq s\left( \lambda \right) \Lambda
$, we write%
\begin{equation*}
Z_{\Lambda }\left( \lambda \right) :=\lambda \partial \Lambda \text{,}
\end{equation*}%
\begin{equation*}
Z_{\Lambda }\left( \left. \lambda \right\backslash G\right) :=\left.
Z_{\Lambda }\left( \lambda \right) \right\backslash \Big(\bigcup_{\nu \in
G}Z_{\Lambda }\left( \lambda \nu \right) \Big)\text{,}
\end{equation*}%
\begin{align*}
Z_{\Lambda }\left( \lambda \ast _{s}\mu \right) & :=\{\left( x,d\left(
\lambda \right) -d\left( \mu \right) ,y\right) \in \mathcal{G}_{\Lambda
}:x\in Z_{\Lambda }\left( \lambda \right) ,y\in Z_{\Lambda }\left( \mu
\right) \\
& \quad \quad \quad \quad \quad \text{ and }\sigma ^{d\left( \lambda \right)
}x=\sigma ^{d\left( \mu \right) }y\}\text{,}
\end{align*}%
and%
\begin{equation*}
Z_{\Lambda }\left( \left. \lambda \ast _{s}\mu \right\backslash G\right)
:=\left. Z_{\Lambda }\left( \lambda \ast _{s}\mu \right) \right\backslash %
\Big(\bigcup_{\nu \in G}Z_{\Lambda }\left( \lambda \nu \ast _{s}\mu \nu
\right) \Big).
\end{equation*}%
The sets $Z_{\Lambda }\left( \left. \lambda \ast _{s}\mu \right\backslash
G\right) $ form a basis of compact open bisections for a second-countable,
Hausdorff topology on $\mathcal{G}_{\Lambda }$ under which it is an ample
groupoid. Further, the sets $Z_{\Lambda }\left( \left. \lambda
\right\backslash G\right) $ form a basis of compact open sets for $\mathcal{G%
}_{\Lambda }^{\left( 0\right) }$.
\end{example}

\begin{remark}
\label{remark-of-groupouid-Glambda}A number of notes of this example:

\begin{enumerate}
\item[(i)] We think of $\mathcal{G}_{\Lambda }^{\left( 0\right) }=\partial
\Lambda $ as a subset of $\mathcal{G}_{\Lambda }$ under the correspondence $%
x\mapsto \left( x,0,x\right) $.

\item[(ii)] In \cite{Y07}, Yeend defines $Z_{\Lambda }\left( \left. \lambda
\right\backslash G\right) $ and $Z_{\Lambda }\left( \left. \lambda \ast
_{s}\mu \right\backslash G\right) $ where $G$ is finite. However, if $G$ is
exhaustive, then $Z_{\Lambda }\left( \left. \lambda \right\backslash
G\right) $ and $Z_{\Lambda }\left( \left. \lambda \ast _{s}\mu
\right\backslash G\right) $ are empty sets. Thus our definitions make sure
that both $Z_{\Lambda }\left( \left. \lambda \right\backslash G\right) $ and
$Z_{\Lambda }\left( \left. \lambda \ast _{s}\mu \right\backslash G\right) $
are non-empty.
\end{enumerate}
\end{remark}

Next we generalise \cite[Proposition 4.3]{CFST14} as follows:

\begin{proposition}
\label{KP-is-isomorphic-to-Steinberg-algebras}Let $\Lambda $ be a finitely
aligned $k$-graph and $\mathcal{G}_{\Lambda }$ be its boundary-path groupoid
as defined in Example \ref{groupouid-Glambda}. Let $R$ be a commutative ring
with $1$. Then there is an isomorphism $\pi _{T}:{\normalsize \operatorname{KP}}%
_{R}\left( \Lambda \right) \rightarrow A_{R}\left( \mathcal{G}_{\Lambda
}\right) $ such that $\pi _{T}\left( s_{\lambda }\right) =1_{Z_{\Lambda
}\left( \lambda \ast _{s}s\left( \lambda \right) \right) }$ and $\pi
_{T}\left( s_{\mu ^{\ast }}\right) =1_{Z_{\Lambda }\left( s\left( \mu
\right) \ast _{s}\mu \right) }$ for $\lambda ,u\in \Lambda $.
\end{proposition}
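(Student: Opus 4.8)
The plan is to check that the proposed generators form a Kumjian-Pask $\Lambda$-family in $A_R(\mathcal{G}_\Lambda)$, invoke the universal property to obtain $\pi_T$, then prove surjectivity by exhibiting every basic characteristic function in the image and injectivity via the graded-uniqueness theorem. Throughout I will write $T_\lambda := 1_{Z_\Lambda(\lambda *_s s(\lambda))}$ and $T_{\mu^*} := 1_{Z_\Lambda(s(\mu) *_s \mu)}$, and reduce all computations to the rule $1_U \star 1_V = 1_{UV}$ for compact open bisections. Unwinding the definitions in Example \ref{groupouid-Glambda} using $x = \lambda z$ for $x \in Z_\Lambda(\lambda)$, one sees $Z_\Lambda(\lambda *_s s(\lambda)) = \{(\lambda z, d(\lambda), z) : z \in s(\lambda)\partial\Lambda\}$ and $Z_\Lambda(s(\mu) *_s \mu) = \{(z, -d(\mu), \mu z) : z \in s(\mu)\partial\Lambda\}$; in particular the bisection of $T_{\mu^*}$ is the inverse of that of $T_\mu$.

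The relations (KP1), (KP2) and (KP4) are then direct bisection computations. For (KP1), $Z_\Lambda(v *_s v) = Z_\Lambda(v) = v\partial\Lambda$ lies in the unit space and $Z_\Lambda(v)Z_\Lambda(w) = \delta_{v,w} Z_\Lambda(v)$, so the $T_v$ are mutually orthogonal idempotents. For (KP2), when $s(\lambda) = r(\mu)$ the product bisection $Z_\Lambda(\lambda *_s s(\lambda)) Z_\Lambda(\mu *_s s(\mu))$ equals $Z_\Lambda(\lambda\mu *_s s(\lambda\mu))$, and the ghost-path identity follows by taking inverses. For (KP4) I would first note $T_\lambda T_{\lambda^*} = 1_{Z_\Lambda(\lambda)}$, so $T_{r(E)} - T_\lambda T_{\lambda^*} = 1_{Z_\Lambda(r(E)) \setminus Z_\Lambda(\lambda)}$; multiplying over $\lambda \in E$ gives $1_{Z_\Lambda(r(E) \setminus E)}$, which vanishes precisely because $E$ exhaustive forces each $x \in r(E)\partial\Lambda$ to satisfy $x(0, d(\lambda)) = \lambda$ for some $\lambda \in E$, emptying the set.

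The main obstacle is (KP3). Here I would compute $Z_\Lambda(s(\lambda) *_s \lambda) Z_\Lambda(\mu *_s s(\mu)) = \{(z, d(\mu) - d(\lambda), w) : \lambda z = \mu w\}$, whose elements correspond to common extensions $\lambda z = \mu w \in Z_\Lambda(\lambda) \cap Z_\Lambda(\mu)$. Since $\Lambda$ is finitely aligned, each such extension factors uniquely through a minimal common extension, i.e.\ $z = \rho z'$ and $w = \tau z'$ for a unique $(\rho, \tau) \in \Lambda^{\min}(\lambda, \mu)$ and $z' \in s(\rho)\partial\Lambda$; hence the product bisection is the \emph{disjoint} union $\bigsqcup_{(\rho, \tau) \in \Lambda^{\min}(\lambda, \mu)} Z_\Lambda(\rho *_s \tau)$. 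Establishing this disjoint decomposition is the crux of the argument. Because the pieces are pairwise disjoint, the characteristic function of the union is the sum of the pieces, and using $d(\rho) - d(\tau) = d(\mu) - d(\lambda)$ together with $T_\rho T_{\tau^*} = 1_{Z_\Lambda(\rho *_s \tau)}$ we obtain $T_{\lambda^*} T_\mu = \sum_{(\rho, \tau) \in \Lambda^{\min}(\lambda, \mu)} T_\rho T_{\tau^*}$, which is (KP3).

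With the Kumjian-Pask family established, Theorem \ref{universal-KP-family} supplies a homomorphism $\pi_T$ taking the stated values. For surjectivity, the basic functions $1_{Z_\Lambda(\lambda *_s \mu \setminus G)}$ span $A_R(\mathcal{G}_\Lambda)$, and each lies in the image since $T_\lambda \big(\prod_{\nu \in G}(T_{s(\lambda)} - T_\nu T_{\nu^*})\big) T_{\mu^*} = 1_{Z_\Lambda(\lambda *_s \mu \setminus G)}$ by the same bisection calculus. For injectivity I would grade $A_R(\mathcal{G}_\Lambda)$ by the continuous cocycle $c(x, m, y) = m$, setting $A_R(\mathcal{G}_\Lambda)_n := \{f : \operatorname{supp}(f) \subseteq c^{-1}(n)\}$; since $c^{-1}(n)$ is clopen and $c$ is a homomorphism this is a $\mathbb{Z}^k$-grading. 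As $\pi_T(s_\lambda s_{\mu^*}) = 1_{Z_\Lambda(\lambda *_s \mu)}$ is supported in $c^{-1}(d(\lambda) - d(\mu))$, the map $\pi_T$ is $\mathbb{Z}^k$-graded. Finally $\pi_T(r s_v) = r \cdot 1_{Z_\Lambda(v)} \neq 0$ whenever $r \neq 0$, because $v\partial\Lambda \neq \emptyset$, so Theorem \ref{the-graded-uniqueness-theorem} forces $\pi_T$ to be injective, completing the proof.
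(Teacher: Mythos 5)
Your construction follows the same overall route as the paper: show that $T_\lambda := 1_{Z_\Lambda(\lambda *_s s(\lambda))}$ and $T_{\mu^\ast} := 1_{Z_\Lambda(s(\mu) *_s \mu)}$ form a Kumjian--Pask $\Lambda$-family, invoke Theorem~\ref{universal-KP-family} to get $\pi_T$, prove injectivity via the degree-cocycle grading of $A_R(\mathcal{G}_\Lambda)$ and Theorem~\ref{the-graded-uniqueness-theorem}, and reduce surjectivity to the identity $\pi_T\big(s_\lambda\big(\prod_{\nu\in G}(s_{s(\lambda)}-s_\nu s_{\nu^\ast})\big)s_{\mu^\ast}\big) = 1_{Z_\Lambda(\lambda *_s \mu\backslash G)}$. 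The one place where you are more self-contained than the paper is the verification of (KP1)--(KP4): the paper simply cites \cite[Theorem 6.13]{FMY05}, whereas you verify the relations by bisection calculus, and your key step for (KP3) --- every common boundary-path extension $\lambda z = \mu w$ factors uniquely through a minimal common extension, so the product bisection is $\bigsqcup_{(\rho,\tau)\in\Lambda^{\min}(\lambda,\mu)} Z_\Lambda(\rho *_s \tau)$ --- is correct.

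However, your surjectivity argument has a genuine gap at the sentence asserting that the functions $1_{Z_\Lambda(\lambda *_s \mu\backslash G)}$ span $A_R(\mathcal{G}_\Lambda)$. This is not a formal consequence of these sets forming a basis of compact open bisections, and it is exactly where the paper does its hardest technical work. For a general Hausdorff ample groupoid, the characteristic functions of an arbitrary basis of compact open bisections need \emph{not} span $A_R(\mathcal{G})$: writing a compact open set as a finite union of basis sets and applying inclusion--exclusion produces characteristic functions of intersections, which need not lie in the span of the basis, and one can build counterexamples already for $\mathcal{G}$ a Cantor space (take as basis the clopen sets annihilated by a suitable nonzero finitely additive signed measure). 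So spanning has to be proved from the specific structure of this basis. That is the content of the paper's Lemma~\ref{intersection-of-Z} --- the intersection of two basis sets is a finite \emph{disjoint} union of basis sets, indexed by $\Lambda^{\min}(\lambda,\lambda')\cap\Lambda^{\min}(\mu,\mu')$, with the removed tails controlled by $\operatorname{Ext}(\gamma;G)\cup\operatorname{Ext}(\gamma';G')$ --- combined with Lemma~\ref{compact-open-bisection-U-is-in-span} (inclusion--exclusion) and \cite[Lemma 2.2]{CE-M15} (reduction of an arbitrary $f$ to characteristic functions of finite unions of basis sets). Your (KP3) computation establishes only a special case of such a decomposition (a product of two pure cylinder bisections, with no $\backslash G$ corrections); the general intersection lemma is substantially more involved, and until it is supplied your proof of surjectivity is incomplete.
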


The only part of the proof of Proposition~\ref%
{KP-is-isomorphic-to-Steinberg-algebras} that requires much additional work
is showing the surjectivity of $\pi _{T}$. For this, we establish the
following two lemmas. These lemmas show that the characteristic function
associated to a compact open set in $\mathcal{G}_{\Lambda}$ can be written
as a sum of elements in the form $1_{Z_{\Lambda }\left( \left. \lambda \ast
_{s}\mu \right\backslash G\right) }$.

\begin{lemma}
\label{intersection-of-Z}Let $\left( \lambda ,\mu \right) ,\left( \lambda
^{\prime },\mu ^{\prime }\right) \in \Lambda \ast _{s}\Lambda $, $G\subseteq
s\left( \lambda \right) \Lambda $, and $G^{\prime }\subseteq s\left( \lambda
^{\prime }\right) \Lambda $. Define $F:=\Lambda ^{\min }\left( \lambda
,\lambda ^{\prime }\right) \cap \Lambda ^{\min }\left( \mu ,\mu ^{\prime
}\right) $. Then
\begin{equation}
Z_{\Lambda }\left( \left. \lambda \ast _{s}\mu \right\backslash G\right)
\cap Z_{\Lambda }\left( \left. \lambda ^{\prime }\ast _{s}\mu ^{\prime
}\right\backslash G^{\prime }\right) =\bigsqcup\limits_{(\gamma ,\gamma
^{\prime })\in F}Z_{\Lambda }\left( \left. \lambda \gamma \ast _{s}\mu
^{\prime }\gamma ^{\prime }\right\backslash \left[ \operatorname{Ext}\left( \gamma
;G\right) \cup \operatorname{Ext}\left( \gamma ^{\prime };G^{\prime }\right) \right]
\right) \text{.}  \tag{*}
\end{equation}
\end{lemma}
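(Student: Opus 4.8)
The plan is to prove the set equality by a membership argument and then deduce disjointness from the uniqueness clause in the factorisation property. First I would record an explicit parametrisation of the basic sets: unwinding the definitions in Example~\ref{groupouid-Glambda}, a triple belongs to $Z_{\Lambda}(\lambda\ast_s\mu\backslash G)$ exactly when it has the form $(\lambda z, d(\lambda)-d(\mu), \mu z)$ with $z\in s(\lambda)\partial\Lambda$ and $z(0,d(\nu))\neq\nu$ for every $\nu\in G$ (that is, $z$ extends no element of $G$), and similarly for the primed data. Note that if $d(\lambda)-d(\mu)\neq d(\lambda')-d(\mu')$ the middle coordinate cannot match, so the left-hand side is empty; the same inequality forces $F=\emptyset$, since any $(\gamma,\gamma')\in F$ satisfies $\lambda\gamma=\lambda'\gamma'$ and $\mu\gamma=\mu'\gamma'$ and hence $d(\lambda)-d(\mu)=d(\lambda')-d(\mu')$. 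Thus both sides vanish in that case, and I may assume $m:=d(\lambda)-d(\mu)=d(\lambda')-d(\mu')$. A triple $(x,m,y)$ then lies in the intersection precisely when there are tails $z,z'$ with $x=\lambda z=\lambda' z'$, $y=\mu z=\mu' z'$, with $z$ avoiding $G$ and $z'$ avoiding $G'$.

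The key technical step is an $\operatorname{Ext}$-translation lemma: for $\gamma\in s(\lambda)\Lambda$ and $w\in s(\gamma)\partial\Lambda$, the path $\gamma w$ extends some $\nu\in G$ if and only if $w$ extends some $\rho\in\operatorname{Ext}(\gamma;G)$. I would prove this directly: if $\gamma w$ extends both $\gamma$ and $\nu$, then $(\gamma w)(0,d(\gamma)\vee d(\nu))$ is a minimal common extension, producing $(\rho,\tau)\in\Lambda^{\min}(\gamma,\nu)$ with $\gamma\rho=\nu\tau$ an initial segment of $\gamma w$, so that $w$ extends $\rho\in\operatorname{Ext}(\gamma;G)$; the converse is immediate since $\gamma\rho=\nu\tau$ forces $\gamma w$ to extend $\nu$. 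Applied to $G$ and to $G'$, and writing $z=\gamma w$, $z'=\gamma' w$, this shows that $z$ avoids $G$ and $z'$ avoids $G'$ if and only if $w$ avoids $\operatorname{Ext}(\gamma;G)\cup\operatorname{Ext}(\gamma';G')$.

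For the forward inclusion I take $(x,m,y)$ in the intersection. Since $x$ extends both $\lambda$ and $\lambda'$, the factorisation property produces a unique $(\gamma,\gamma')\in\Lambda^{\min}(\lambda,\lambda')$ with $x(0,d(\lambda)\vee d(\lambda'))=\lambda\gamma=\lambda'\gamma'$, and then $z=\gamma w$, $z'=\gamma' w$ where $w:=\sigma^{d(\lambda)\vee d(\lambda')}x$. The compatibility $d(\lambda)-d(\mu)=d(\lambda')-d(\mu')$ forces $d(\gamma)=d(\mu)\vee d(\mu')-d(\mu)$ and $d(\gamma')=d(\mu)\vee d(\mu')-d(\mu')$, so $\mu\gamma$ and $\mu'\gamma'$ are initial segments of $y$ of the common degree $d(\mu)\vee d(\mu')$; hence $\mu\gamma=\mu'\gamma'$ is a minimal common extension and $(\gamma,\gamma')\in\Lambda^{\min}(\mu,\mu')$, that is, $(\gamma,\gamma')\in F$. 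By the $\operatorname{Ext}$-translation, $(x,m,y)=(\lambda\gamma w, m, \mu'\gamma' w)$ lies in $Z_{\Lambda}(\lambda\gamma\ast_s\mu'\gamma'\backslash[\operatorname{Ext}(\gamma;G)\cup\operatorname{Ext}(\gamma';G')])$. The reverse inclusion runs backwards along the same computation: given $(\gamma,\gamma')\in F$ and a point of the corresponding right-hand set with tail $w$, setting $z=\gamma w$ and $z'=\gamma' w$ recovers a point avoiding $G$ and $G'$ that lies in both original sets.

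Finally, disjointness of the union follows from the uniqueness in the factorisation property: if a triple lay in two summands indexed by distinct $(\gamma,\gamma'),(\tilde\gamma,\tilde\gamma')\in F$, then $\lambda\gamma$ and $\lambda\tilde\gamma$ would both equal $x(0,d(\lambda)\vee d(\lambda'))$, forcing $\gamma=\tilde\gamma$ and then $\gamma'=\tilde\gamma'$. I expect the $\operatorname{Ext}$-translation to be the main obstacle, since it is where the removed sets $G$ and $G'$ interact with the reindexing by minimal common extensions; the parametrisation, the degree bookkeeping, and the disjointness are otherwise routine consequences of the factorisation property and finite alignment.
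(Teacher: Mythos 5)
Your proposal is correct and follows essentially the same route as the paper's proof: both reindex the intersection by the minimal common extensions in $F$ obtained from the factorisation property, use the same degree bookkeeping together with the shift relation $\sigma ^{d\left( \lambda \right) }x=\sigma ^{d\left( \mu \right) }y$ to see that the pair produced from $x$ also lies in $\Lambda ^{\min }\left( \mu ,\mu ^{\prime }\right) $, and get disjointness from uniqueness of factorisations. The only difference is organisational: you isolate the interaction with the removed sets as a single biconditional ($\gamma w$ extends an element of $G$ if and only if $w$ extends an element of $\operatorname{Ext}\left( \gamma ;G\right) $), whereas the paper proves the two directions of exactly this equivalence as separate contradiction arguments inside the two inclusions.
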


\begin{proof}
We generalise the argument of \cite[Example 3.2]{CS15} for $1$-graphs. First
we show that the collection%
\begin{equation*}
\left\{ Z_{\Lambda }\left( \left. \lambda \gamma \ast _{s}\mu ^{\prime
}\gamma ^{\prime }\right\backslash \left[ \operatorname{Ext}\left( \gamma ;G\right)
\cup \operatorname{Ext}\left( \gamma ^{\prime };G^{\prime }\right) \right] \right)
:(\gamma ,\gamma ^{\prime })\in F\right\}
\end{equation*}%
is disjoint. It suffices to show that the collection%
\begin{equation*}
\left\{ Z_{\Lambda }\left( \lambda \gamma \ast _{s}\mu ^{\prime }\gamma
^{\prime }\right) :(\gamma ,\gamma ^{\prime })\in F\right\}
\end{equation*}%
is disjoint. Suppose for contradiction that there exist $(\gamma ,\gamma
^{\prime }),(\gamma ^{\prime \prime },\gamma ^{\prime \prime \prime })\in F$
such that $(\gamma ,\gamma ^{\prime })\neq (\gamma ^{\prime \prime },\gamma
^{\prime \prime \prime })$ and $V:=Z_{\Lambda }\left( \lambda \gamma \ast
_{s}\mu ^{\prime }\gamma ^{\prime }\right) \cap Z_{\Lambda }\left( \lambda
\gamma ^{\prime \prime }\ast _{s}\mu ^{\prime }\gamma ^{\prime \prime \prime
}\right) \neq \emptyset $. Note that if $\gamma =\gamma ^{\prime \prime }$,
then%
\begin{align*}
\lambda ^{\prime }\gamma ^{\prime }& =\lambda \gamma \text{ (since }(\gamma
,\gamma ^{\prime })\in \Lambda ^{\min }\left( \lambda ,\lambda ^{\prime
}\right) \text{)} \\
& =\lambda \gamma ^{\prime \prime }\text{ (since }\gamma =\gamma ^{\prime
\prime }\text{)} \\
& =\lambda ^{\prime }\gamma ^{\prime \prime \prime }\text{ (since }(\gamma
^{\prime \prime },\gamma ^{\prime \prime \prime })\in \Lambda ^{\min }\left(
\lambda ,\lambda ^{\prime }\right) \text{)}
\end{align*}%
and $\gamma ^{\prime }=\gamma ^{\prime \prime \prime }$ by the factorisation
property, which contradicts $(\gamma ,\gamma ^{\prime })\neq (\gamma
^{\prime \prime },\gamma ^{\prime \prime \prime })$. The same argument shows
that $\gamma ^{\prime }=\gamma ^{\prime \prime \prime }$ implies $\gamma
=\gamma ^{\prime \prime }$. Hence $\gamma \neq \gamma ^{\prime \prime }$ and
$\gamma ^{\prime }\neq \gamma ^{\prime \prime \prime }$. Meanwhile, since $%
(\gamma ,\gamma ^{\prime }),(\gamma ^{\prime \prime },\gamma ^{\prime \prime
\prime })\in F$, then $d\left( \gamma \right) =d\left( \gamma ^{\prime
\prime }\right) $ and $d\left( \gamma ^{\prime }\right) =d\left( \gamma
^{\prime \prime \prime }\right) $. Take $\left( x,m,y\right) \in V$. Then $%
x\in Z_{\Lambda }\left( \lambda \gamma \right) $ and $x\in Z_{\Lambda
}\left( \lambda \gamma ^{\prime \prime }\right) $. Since $d\left( \gamma
\right) =d\left( \gamma ^{\prime \prime }\right) $, then $d\left( \lambda
\gamma \right) =d\left( \lambda \gamma ^{\prime \prime }\right) $ and $%
\gamma =x\left( d\left( \lambda \right) ,d\left( \lambda \gamma \right)
\right) =x\left( d\left( \lambda \right) ,d\left( \lambda \gamma ^{\prime
\prime }\right) \right) =\gamma ^{\prime \prime }$, which contradicts $%
\gamma \neq \gamma ^{\prime \prime }$. Hence the collection $\left\{
Z_{\Lambda }\left( \lambda \gamma \ast _{s}\mu ^{\prime }\gamma ^{\prime
}\right) :(\gamma ,\gamma ^{\prime })\in F\right\} $ is disjoint, and so is%
\begin{equation*}
\left\{ Z_{\Lambda }\left( \left. \lambda \gamma \ast _{s}\mu ^{\prime
}\gamma ^{\prime }\right\backslash \left[ \operatorname{Ext}\left( \gamma ;G\right)
\cup \operatorname{Ext}\left( \gamma ^{\prime };G^{\prime }\right) \right] \right)
:(\gamma ,\gamma ^{\prime })\in F\right\} \text{.}
\end{equation*}

Next we show the right inclusion of (*). Write%
\begin{equation*}
U:=Z_{\Lambda }\left( \left. \lambda \ast _{s}\mu \right\backslash G\right)
\cap Z_{\Lambda }\left( \left. \lambda ^{\prime }\ast _{s}\mu ^{\prime
}\right\backslash G^{\prime }\right)
\end{equation*}%
and take $\left( x,m,y\right) \in U$. We show $\left( x,m,y\right) \in
Z_{\Lambda }\left( \left. \lambda \gamma \ast _{s}\mu ^{\prime }\gamma
^{\prime }\right\backslash \left[ \operatorname{Ext}\left( \gamma ;G\right) \cup
\operatorname{Ext}\left( \gamma ^{\prime };G^{\prime }\right) \right] \right) $ for
some $\left( \gamma ,\gamma ^{\prime }\right) \in F$. Because $x\in
Z_{\Lambda }\left( \lambda \right) $ and $x\in Z_{\Lambda }\left( \lambda
^{\prime }\right) $, then $d\left( x\right) \geq d\left( \lambda \right)
\vee d\left( \lambda ^{\prime }\right) $ and there exists $\left( \gamma
,\gamma ^{\prime }\right) \in \Lambda ^{\min }\left( \lambda ,\lambda
^{\prime }\right) $ such that
\begin{equation}
x\in Z_{\Lambda }\left( \lambda \gamma \right) .
\label{equ-x-in-Z(lambda,gamma)}
\end{equation}%
Using a similar argument, there exists $\left( \gamma ^{\prime \prime
},\gamma ^{\prime \prime \prime }\right) \in \Lambda ^{\min }\left( \mu ,\mu
^{\prime }\right) $ such that
\begin{equation}
y\in Z_{\Lambda }\left( \mu \gamma ^{\prime \prime }\right) .
\label{equ-y-in-Z(mu,gamma)}
\end{equation}

We claim that $\gamma =\gamma ^{\prime \prime }$ and $\gamma ^{\prime
}=\gamma ^{\prime \prime \prime }$. To see this, note that $m=d\left(
\lambda \right) -d\left( \mu \right) =d\left( \lambda ^{\prime }\right)
-d\left( \mu ^{\prime }\right) $ and%
\begin{align*}
d\left( \gamma \right) & =d\left( \lambda \right) \vee d\left( \lambda
^{\prime }\right) -d\left( \lambda \right) =\left( d\left( \mu \right)
+m\right) \vee \left( d\left( \mu ^{\prime }\right) +m\right) -\left(
d\left( \mu \right) +m\right) \\
& =\left( d\left( \mu \right) \vee d\left( \mu ^{\prime }\right) \right)
+m-\left( d\left( \mu \right) +m\right) =d\left( \mu \right) \vee d\left(
\mu ^{\prime }\right) -d\left( \mu \right) =d\left( \gamma ^{\prime \prime
}\right) \text{.}
\end{align*}%
Since $\left( x,m,y\right) \in Z_{\Lambda }\left( \left. \lambda \ast
_{s}\mu \right\backslash G\right) $, then $\sigma ^{d\left( \lambda \right)
}x=\sigma ^{d\left( \mu \right) }y$ and
\begin{equation*}
\gamma =\left( \sigma ^{d\left( \lambda \right) }x\right) \left( 0,d\left(
\gamma \right) \right) =\left( \sigma ^{d\left( \mu \right) }y\right) \left(
0,d\left( \gamma ^{\prime }\right) \right) =\gamma ^{\prime \prime }\text{.}
\end{equation*}%
Using a similar argument, we also get $\gamma ^{\prime }=\gamma ^{\prime
\prime \prime }$ proving the claim.

Next we show that $\left( x,m,y\right) \in Z_{\Lambda }\left( \lambda \gamma
\ast _{s}\mu ^{\prime }\gamma ^{\prime }\right) $. By %
\eqref{equ-x-in-Z(lambda,gamma)} and \eqref{equ-y-in-Z(mu,gamma)}, we have $%
x\in Z_{\Lambda }\left( \lambda \gamma \right) $ and $y\in Z_{\Lambda
}\left( \mu \gamma ^{\prime \prime }\right) $. Since $\gamma =\gamma
^{\prime \prime }$, $\gamma ^{\prime }=\gamma ^{\prime \prime \prime }$, $%
\left( \gamma ^{\prime \prime },\gamma ^{\prime \prime \prime }\right) \in
\Lambda ^{\min }\left( \mu ,\mu ^{\prime }\right) $, then $\mu \gamma
^{\prime \prime }=\mu \gamma =\mu ^{\prime }\gamma ^{\prime }$ and $y\in
Z_{\Lambda }\left( \mu ^{\prime }\gamma ^{\prime }\right) .$ On the other
hand, since $\left( x,m,y\right) \in Z_{\Lambda }\left( \left. \lambda \ast
_{s}\mu \right\backslash G\right) $, then $\sigma ^{d\left( \lambda \right)
}x=\sigma ^{d\left( \mu \right) }y$ and%
\begin{equation*}
\sigma ^{d\left( \lambda \gamma \right) }x=\sigma ^{d\left( \mu \gamma
\right) }y=\sigma ^{d(\mu ^{\prime }\gamma ^{\prime })}y
\end{equation*}%
since $\mu \gamma =\mu ^{\prime }\gamma ^{\prime }$. Since $m=d\left(
\lambda \right) -d\left( \mu \right) =d\left( \lambda \gamma \right)
-d\left( \mu ^{\prime }\gamma ^{\prime }\right) $, then $\left( x,m,y\right)
\in Z_{\Lambda }\left( \lambda \gamma \ast _{s}\mu ^{\prime }\gamma ^{\prime
}\right) $, as required.

Finally we show that $\left( x,m,y\right) \notin Z_{\Lambda }\left( \lambda
\gamma \nu \ast _{s}\mu ^{\prime }\gamma ^{\prime }\nu \right) $ for all $%
\nu \in \operatorname{Ext}\left( \gamma ;G\right) \cup \operatorname{Ext}\left( \gamma
^{\prime };G^{\prime }\right) $. Suppose for a contradiction that there
exists $\nu \in \operatorname{Ext}\left( \gamma ;G\right) \cup \operatorname{Ext}\left(
\gamma ^{\prime };G^{\prime }\right) $ such that $\left( x,m,y\right) \in
Z_{\Lambda }\left( \lambda \gamma \nu \ast _{s}\mu ^{\prime }\gamma ^{\prime
}\nu \right) $. Without loss of generality, suppose $\nu \in \operatorname{Ext}%
\left( \gamma ;G\right) $. Then there exists $\nu ^{\prime }\in G$ such that
$\gamma \nu \in Z_{\Lambda }\left( \nu ^{\prime }\right) $. Since $x\in
Z_{\Lambda }\left( \lambda \gamma \nu \right) $, $y\in Z_{\Lambda }\left(
\mu ^{\prime }\gamma ^{\prime }\nu \right) =Z_{\Lambda }\left( \mu \gamma
\nu \right) $, and $\gamma \nu \in Z_{\Lambda }\left( \nu ^{\prime }\right) $%
, then $x\in Z_{\Lambda }\left( \lambda \nu ^{\prime }\right) $ and $y\in
Z_{\Lambda }\left( \mu \nu ^{\prime }\right) $ where $\nu ^{\prime }\in G$.
This contradicts $\left( x,m,y\right) \in Z_{\Lambda }\left( \left. \lambda
\ast _{s}\mu \right\backslash G\right) $. Hence
\begin{equation*}
\left( x,m,y\right) \in Z_{\Lambda }\left( \left. \lambda \gamma \ast
_{s}\mu ^{\prime }\gamma ^{\prime }\right\backslash \left[ \operatorname{Ext}\left(
\gamma ;G\right) \cup \operatorname{Ext}\left( \gamma ^{\prime };G^{\prime }\right) %
\right] \right)
\end{equation*}%
and%
\begin{equation*}
U\subseteq \bigsqcup\limits_{(\gamma ,\gamma ^{\prime })\in F}Z_{\Lambda
}\left( \left. \lambda \gamma \ast _{s}\mu ^{\prime }\gamma ^{\prime
}\right\backslash \left[ \operatorname{Ext}\left( \gamma ;G\right) \cup \operatorname{Ext}%
\left( \gamma ^{\prime };G^{\prime }\right) \right] \right) \text{.}
\end{equation*}

Next we show the left inclusion of (*). Take $\left( \gamma ,\gamma ^{\prime
}\right) \in F$ and
\begin{equation}
\left( x,m,y\right) \in Z_{\Lambda }\left( \left. \lambda \gamma \ast
_{s}\mu ^{\prime }\gamma ^{\prime }\right\backslash \left[ \operatorname{Ext}\left(
\gamma ;G\right) \cup \operatorname{Ext}\left( \gamma ^{\prime };G^{\prime }\right) %
\right] \right) \text{.}  \label{equ-x-in-Z-without-ext}
\end{equation}%
We show $\left( x,m,y\right) $ belongs to both $Z_{\Lambda }\left( \left.
\lambda \ast _{s}\mu \right\backslash G\right) $ and $Z_{\Lambda }\left(
\left. \lambda ^{\prime }\ast _{s}\mu ^{\prime }\right\backslash G^{\prime
}\right) $. Without loss of generality, it suffices to show $\left(
x,m,y\right) \in Z_{\Lambda }\left( \left. \lambda \ast _{s}\mu
\right\backslash G\right) $. First we show that $\left( x,m,y\right) \in
Z_{\Lambda }\left( \lambda \ast _{s}\mu \right) $. Note that we have $\mu
\gamma =\mu ^{\prime }\gamma ^{\prime }$ and $m=d\left( \lambda \gamma
\right) -d\left( \mu ^{\prime }\gamma ^{\prime }\right) =d\left( \lambda
\right) -d\left( \mu \right) $. On the other hand, $\left( x,m,y\right) \in
Z_{\Lambda }\left( \lambda \gamma \ast _{s}\mu ^{\prime }\gamma ^{\prime
}\right) $ also implies $x\in Z_{\Lambda }\left( \lambda \gamma \right) $
and $y\in Z_{\Lambda }\left( \mu ^{\prime }\gamma ^{\prime }\right)
=Z_{\Lambda }\left( \mu \gamma \right) $. Furthermore,
\begin{align*}
\sigma ^{\left( \lambda \right) }x& =\left[ x\left( d\left( \lambda \right)
,d\left( \lambda \gamma \right) \right) \right] \left[ \sigma ^{\left(
\lambda \gamma \right) }x\right] \\
& =\gamma \left[ \sigma ^{\left( \lambda \gamma \right) }x\right] \text{
(since }x\left( d\left( \lambda \right) ,d\left( \lambda \gamma \right)
\right) =\gamma \text{)} \\
& =\gamma \lbrack \sigma ^{(\mu ^{\prime }\gamma ^{\prime })}y]\text{ (since
}\sigma ^{\left( \lambda \gamma \right) }x=\sigma ^{(\mu ^{\prime }\gamma
^{\prime })}y\text{)} \\
& =\left[ y\left( d\left( \mu \right) ,d\left( \mu \gamma \right) \right) %
\right] [\sigma ^{(\mu ^{\prime }\gamma ^{\prime })}y]\text{ (since }y\left(
d\left( \mu \right) ,d\left( \mu \gamma \right) \right) =\gamma \text{)} \\
& =\left[ y\left( d\left( \mu \right) ,d\left( \mu \gamma \right) \right) %
\right] [\sigma ^{(\mu \gamma )}y]\text{ (since }\mu \gamma =\mu ^{\prime
}\gamma ^{\prime }) \\
& =\sigma ^{(\mu )}y
\end{align*}%
and then $\left( x,m,y\right) \in Z_{\Lambda }\left( \lambda \ast _{s}\mu
\right) $, as required.

To complete the proof, we have to show $\left( x,m,y\right) \notin
Z_{\Lambda }\left( \lambda \nu \ast _{s}\mu \nu \right) $ for all $\nu \in G$%
. Suppose for contradiction that there exists $\nu \in G$ such that $\left(
x,m,y\right) \in Z_{\Lambda }\left( \lambda \nu \ast _{s}\mu \nu \right) $.
In particular, $x\in Z_{\Lambda }\left( \lambda \nu \right) $. Since $x\in
Z_{\Lambda }\left( \lambda \gamma \right) $ and $x\in Z_{\Lambda }\left(
\lambda \nu \right) $, then there exists $\nu ^{\prime }\in \operatorname{Ext}\left(
\gamma ;\left\{ \nu \right\} \right) $ such that $x\in Z_{\Lambda }\left(
\lambda \gamma \nu ^{\prime }\right) $. Hence%
\begin{align*}
\sigma ^{(\lambda \gamma \nu ^{\prime })}x& =\sigma ^{(\mu \gamma \nu
^{\prime })}y\text{ (since }\sigma ^{\left( \lambda \right) }x=\sigma ^{(\mu
)}y\text{)} \\
& =\sigma ^{(\mu ^{\prime }\gamma ^{\prime }\nu ^{\prime })}y\text{ (since }%
\mu \gamma =\mu ^{\prime }\gamma ^{\prime }\text{),}
\end{align*}%
\begin{align}
\left( \sigma ^{(\mu )}y\right) \left( 0,d\left( \gamma \nu ^{\prime
}\right) \right) & =\left( \sigma ^{\left( \lambda \right) }x\right) \left(
0,d\left( \gamma \nu ^{\prime }\right) \right) \text{ (since }\sigma
^{\left( \lambda \right) }x=\sigma ^{(\mu )}y\text{)}
\label{equ-intersection-of-Z-1} \\
& =x\left( d\left( \lambda \right) ,d\left( \lambda \gamma \nu ^{\prime
}\right) \right)  \notag \\
& =\gamma \nu ^{\prime }\text{ (since }x\in Z_{\Lambda }\left( \lambda
\gamma \nu ^{\prime }\right) \text{),}  \notag
\end{align}%
and%
\begin{align*}
y\left( 0,d\left( \mu ^{\prime }\gamma ^{\prime }\nu ^{\prime }\right)
\right) & =y\left( 0,d\left( \mu \gamma \nu ^{\prime }\right) \right) \text{
(since }\mu \gamma =\mu ^{\prime }\gamma ^{\prime }\text{)} \\
& =\mu \gamma \nu ^{\prime }\text{ (by \eqref{equ-intersection-of-Z-1})} \\
& =\mu ^{\prime }\gamma ^{\prime }\nu ^{\prime }\text{ (since }\mu \gamma
=\mu ^{\prime }\gamma ^{\prime }\text{).}
\end{align*}%
Furthermore,%
\begin{align*}
d\left( \lambda \gamma \nu ^{\prime }\right) -d\left( \mu ^{\prime }\gamma
^{\prime }\nu ^{\prime }\right) & =d\left( \lambda \gamma \right) -d\left(
\mu ^{\prime }\gamma ^{\prime }\right) \\
& =d\left( \lambda \gamma \right) -d\left( \mu \gamma \right) \text{ (since }%
\mu \gamma =\mu ^{\prime }\gamma ^{\prime }\text{)} \\
& =d\left( \lambda \right) -d\left( \mu \right) =m\text{.}
\end{align*}%
Hence $\left( x,m,y\right) \in Z_{\Lambda }\left( \lambda \gamma \nu
^{\prime }\ast _{s}\mu ^{\prime }\gamma ^{\prime }\nu ^{\prime }\right) $
for some $\nu ^{\prime }\in \operatorname{Ext}\left( \gamma ;\left\{ \nu \right\}
\right) \subseteq \operatorname{Ext}\left( \gamma ;G\right) $, which contradicts %
\eqref{equ-x-in-Z-without-ext}. The conclusion follows.
\end{proof}

\begin{lemma}
\label{compact-open-bisection-U-is-in-span}Let $\left\{ Z_{\Lambda }\left(
\left. \lambda _{i}\ast _{s}\mu _{i}\right\backslash G_{i}\right) \right\}
_{i=1}^{n}$ be a finite collection of compact open bisection sets and%
\begin{equation*}
U:=\bigcup_{i=1}^{n}Z_{\Lambda }\left( \left. \lambda _{i}\ast _{s}\mu
_{i}\right\backslash G_{i}\right) \text{.}
\end{equation*}%
Then%
\begin{equation*}
1_{U}\in \operatorname{span}_{R}\left\{ 1_{Z_{\Lambda }\left( \left. \lambda \ast
_{s}\mu \right\backslash G\right) }:\left( \lambda ,\mu \right) \in \Lambda
\ast _{s}\Lambda ,G\subseteq s\left( \lambda \right) \Lambda \right\} \text{.%
}
\end{equation*}
\end{lemma}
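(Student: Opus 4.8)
The plan is to argue by inclusion--exclusion, reducing everything to the pairwise intersection formula of Lemma~\ref{intersection-of-Z}. Write $S$ for the $R$-submodule of $A_{R}(\mathcal{G}_{\Lambda})$ appearing on the right-hand side of the statement, namely the span of the functions $1_{Z_{\Lambda}(\lambda \ast_{s}\mu \setminus G)}$, and set $A_{i}:=Z_{\Lambda}(\lambda_{i}\ast_{s}\mu_{i}\setminus G_{i})$ so that $U=\bigcup_{i=1}^{n}A_{i}$. The starting observation is that for any two subsets $A,B\subseteq\mathcal{G}_{\Lambda}$ one has the pointwise identity $1_{A}1_{B}=1_{A\cap B}$. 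By Lemma~\ref{intersection-of-Z}, the intersection of two basic sets $Z_{\Lambda}(\lambda\ast_{s}\mu\setminus G)$ and $Z_{\Lambda}(\lambda'\ast_{s}\mu'\setminus G')$ is a \emph{disjoint} union of finitely many basic sets; since the union is disjoint, the associated characteristic function is exactly the sum of the corresponding basic characteristic functions, and hence lies in $S$. Thus $S$ is closed under pointwise multiplication of its spanning elements.

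Next I would upgrade this from pairwise to arbitrary finite intersections by induction. Concretely, I claim that for any nonempty $I\subseteq\{1,\dots,n\}$ the set $\bigcap_{i\in I}A_{i}$ is a finite disjoint union of basic sets. The case $|I|=1$ is immediate, and for the inductive step I would write $\bigcap_{i\in I\setminus\{j\}}A_{i}=\bigsqcup_{\ell}B_{\ell}$ as a disjoint union of basic sets (inductive hypothesis), distribute the intersection with $A_{j}$ over this disjoint union to get $\bigsqcup_{\ell}(B_{\ell}\cap A_{j})$, and then apply Lemma~\ref{intersection-of-Z} to each $B_{\ell}\cap A_{j}$ to rewrite it as a finite disjoint union of basic sets. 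Disjointness is preserved throughout, so $\bigcap_{i\in I}A_{i}$ is a finite disjoint union of basic sets and therefore $1_{\bigcap_{i\in I}A_{i}}\in S$ (the empty intersection gives $0\in S$).

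Finally I would invoke the inclusion--exclusion identity, which holds pointwise for $\{0,1\}$-valued functions and hence as an equation of functions into $R$ (via $\mathbb{Z}\to R$, $m\mapsto m\cdot 1$):
\[
1_{U}=\sum_{\emptyset\neq I\subseteq\{1,\dots,n\}}(-1)^{|I|-1}\,1_{\bigcap_{i\in I}A_{i}}.
\]
Each summand lies in $S$ by the previous paragraph, and the coefficients $(-1)^{|I|-1}\in\{1,-1\}\subseteq R$, so $1_{U}\in S$, as required. The only genuinely substantive step is the induction reducing multi-fold intersections to the pairwise case handled by Lemma~\ref{intersection-of-Z}; everything else is formal bookkeeping with characteristic functions.
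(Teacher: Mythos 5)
Your proof is correct and follows essentially the same route as the paper: pairwise intersections are handled by Lemma~\ref{intersection-of-Z}, and the union is then assembled by inclusion--exclusion. Your induction showing that every finite intersection of basic sets decomposes as a finite \emph{disjoint} union of basic sets simply makes explicit the step the paper dispatches for $n\geq 3$ with a one-line appeal to ``the inclusion--exclusion principle and de Morgan's law,'' so it is a welcome filling-in of detail rather than a different argument.
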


\begin{proof}
It is trivial for $n=1$. Now let $n=2$ and $F:=\Lambda ^{\min }\left(
\lambda _{1},\lambda _{2}\right) \cap \Lambda ^{\min }\left( \mu _{1},\mu
_{2}\right) \,$. If $F=\emptyset $, then
\begin{equation*}
1_{U}=1_{Z_{\Lambda }\left( \left. \lambda \ast _{s}\mu \right\backslash
G\right) }+1_{Z_{\Lambda }\left( \left. \lambda ^{\prime }\ast _{s}\mu
^{\prime }\right\backslash G^{\prime }\right) }\text{.}
\end{equation*}%
Otherwise, by Proposition \ref{intersection-of-Z}, we have
\begin{equation*}
1_{U}=1_{Z_{\Lambda }\left( \left. \lambda \ast _{s}\mu \right\backslash
G\right) }+1_{Z_{\Lambda }\left( \left. \lambda ^{\prime }\ast
_{s}\mu^{\prime}\right\backslash G^{\prime }\right) }-\sum\limits_{(\gamma
,\gamma ^{\prime })\in F}1_{Z_{\gamma ,\gamma ^{\prime }}}
\end{equation*}%
where $Z_{\gamma ,\gamma ^{\prime }}:=Z_{\Lambda }\left( \left. \lambda
\gamma \ast _{s}\mu ^{\prime }\gamma ^{\prime }\right\backslash \operatorname{Ext}%
\left( \gamma ;G\right) \cup \operatorname{Ext}\left( \gamma ^{\prime };G^{\prime
}\right) \right) $, as required. For $n\geq 3$, by using the
inclusion-exclusion principle and de Morgan's law, $1_{U}$ can be written as
a sum of elements in the form $1_{Z_{\Lambda }\left( \left. \lambda \ast
_{s}\mu \right\backslash G\right) }$.
\end{proof}

\begin{proof}[Proof of Proposition \protect\ref%
{KP-is-isomorphic-to-Steinberg-algebras}]
Define $T_{\lambda }:=1_{Z_{\Lambda }\left( \lambda \ast _{s}s\left( \lambda
\right) \right) }$. Then by \cite[Theorem 6.13]{FMY05} (or \cite[Example 7.1]%
{Y07}), $\left\{ T_{\lambda},T_{\mu ^{\ast }}:\lambda ,u\in \Lambda \right\}
$ is a Kumjian-Pask $\Lambda $-family in $A_{R}\left( \mathcal{G}_{\Lambda
}\right) $. Hence, there exists a homomorphism $\pi _{T}:{\normalsize \operatorname{%
KP}}_{R}\left( \Lambda \right) \rightarrow A_{R}\left( \mathcal{G}_{\Lambda
}\right) $ such that $\pi _{T}\left( s_{\lambda }\right) =T_{\lambda }$ and $%
\pi _{T}\left( s_{\mu ^{\ast }}\right) =T_{\mu ^{\ast }}$ for $\lambda ,\mu
\in \Lambda $ by Theorem~\ref{universal-KP-family}(a).

To see that $\pi _{T}$ is injective, first we show that $\pi _{T}$ is
graded. Take $\lambda ,\mu \in \Lambda $. Then $s_{\lambda }s_{\mu ^{\ast
}}\in {\normalsize \operatorname{KP}}_{R}\left( \Lambda \right) _{d\left( \lambda
\right) -d\left( \mu \right) }$ and%
\begin{equation*}
\pi _{T}\left( s_{\lambda }s_{\mu ^{\ast }}\right) =1_{Z_{\Lambda }\left(
\lambda \ast _{s}\mu \right) }=1_{\{\left( x,d\left( \lambda \right)
-d\left( \mu \right) ,y\right) :\left( \lambda ,\mu \right) \in \Lambda \ast
_{s}\Lambda ,z\in s\left( \lambda \right) \partial \Lambda \}}\in
A_{R}\left( \mathcal{G}_{\Lambda }\right) _{d\left( \lambda \right) -d\left(
\mu \right) }\text{.}
\end{equation*}%
Since for every $n\in \mathbb{Z}^{k}$, ${\normalsize \operatorname{KP}}_{R}\left(
\Lambda \right) _{n}$ is spanned by elements in the form $s_{\lambda }s_{\mu
^{\ast }}$ (Theorem \ref{universal-KP-family}.(c)), then for $n\in \mathbb{Z}%
^{k}$, $\pi _{T}\left( {\normalsize \operatorname{KP}}_{R}\left( \Lambda \right)
_{n}\right) \subseteq A_{R}\left( \mathcal{G}_{\Lambda }\right) _{n}$ and $%
\pi _{T}$ is graded. Since $\pi _{T}\left( rs_{v}\right) =r1_{Z_{\Lambda
}\left( v\ast _{s}v\right) }\neq 0$ for all $r\in \left. R\right\backslash
\left\{ 0\right\} $ and $v\in \Lambda ^{0}$, and $\pi _{T}$ is graded, then
by Theorem \ref{the-graded-uniqueness-theorem}, $\pi _{T}$ is injective, as
required.

Finally we show the surjectivity of $\pi _{T}$. Take $f\in A_{R}\left(
\mathcal{G}_{\Lambda }\right) $. By \cite[Lemma 2.2]{CE-M15}, $f$ can be
written as $\sum_{U\in F}a_{U}1_{U}$ where $a_{U}\in R$, each $U$ is in the
form $\bigcup_{i=1}^{n}Z_{\Lambda }\left( \left. \lambda _{i}\ast _{s}\mu
_{i}\right\backslash G_{i}\right) $ for some $n\in \mathbb{N}$, and $F$ is
finite set of mutually disjoint elements. Hence, to show $f\in \operatorname{im}%
\left( \pi _{T}\right) $, it suffices to show
\begin{equation*}
1_{U}\in \operatorname{im}\left( \pi _{T}\right)
\end{equation*}%
where $U:=\bigcup_{i=1}^{n}Z_{\Lambda }\left( \left. \lambda _{i}\ast
_{s}\mu _{i}\right\backslash G_{i}\right) $ for some $n\in \mathbb{N}$ and
collection $\left\{ Z_{\Lambda }\left( \left. \lambda _{i}\ast _{s}\mu
_{i}\right\backslash G_{i}\right) \right\} _{i=1}^{n}$. By Lemma \ref%
{compact-open-bisection-U-is-in-span}, $1_{U}$ can be written as the sum of
elements in the form $1_{Z_{\Lambda }\left( \left. \lambda \ast _{s}\mu
\right\backslash G\right) }$. On the other hand, for $\left( \lambda ,\mu
\right) \in \Lambda \ast _{s}\Lambda $ and finite $G\subseteq s\left(
\lambda \right) \Lambda $, we have%
\begin{align}
T_{\lambda }\big(\prod_{\nu \in G}\left( T_{s\left( \lambda \right) }-T_{\nu
}T_{\nu ^{\ast }}\right) \big)T_{\mu ^{\ast }}& =1_{Z_{\Lambda }\left(
\lambda \ast _{s}s\left( \lambda \right) \right) }\big(\prod_{\nu \in
G}\left( 1_{Z_{\Lambda }\left( s\left( \lambda \right) \ast _{s}s\left(
\lambda \right) \right) }-1_{Z_{\Lambda }\left( \nu \ast _{s}\nu \right)
}\right) \big)1_{Z_{\Lambda }\left( s\left( \mu \right) \ast _{s}\mu \right)
}  \label{equ-1_Z(lambda*mu-G)} \\
& =1_{Z_{\Lambda }\left( \lambda \ast _{s}s\left( \lambda \right) \right) }%
\big(\prod_{\nu \in G}\left( 1_{Z_{\Lambda }\left( \left. s\left( \lambda
\right) \ast _{s}s\left( \lambda \right) \right\backslash \left\{ \nu
\right\} \right) }\right) \big)1_{Z_{\Lambda }\left( s\left( \mu \right)
\ast _{s}\mu \right) }  \notag \\
& =1_{Z_{\Lambda }\left( \lambda \ast _{s}s\left( \lambda \right) \right) }%
\big(1_{\prod_{\nu \in G}Z_{\Lambda }\left( \left. s\left( \lambda \right)
\ast _{s}s\left( \lambda \right) \right\backslash \left\{ \nu \right\}
\right) }\big)1_{Z_{\Lambda }\left( s\left( \mu \right) \ast _{s}\mu \right)
}  \notag \\
& =1_{Z_{\Lambda }\left( \lambda \ast _{s}s\left( \lambda \right) \right) }%
\big(1_{Z_{\Lambda }\left( \left. s\left( \lambda \right) \ast _{s}s\left(
\lambda \right) \right\backslash G\right) }\big)1_{Z_{\Lambda }\left(
s\left( \mu \right) \ast _{s}\mu \right) }  \notag \\
& =1_{Z_{\Lambda }\left( \left. \lambda \ast _{s}\mu \right\backslash
G\right) }  \notag
\end{align}%
since $s\left( \lambda \right) =s\left( \mu \right) $. Hence, $1_{Z_{\Lambda
}\left( \left. \lambda \ast _{s}\mu \right\backslash G\right) }$ belongs to $%
\operatorname{im}\left( \pi _{T}\right) $ and then so does $1_{U}$, as required.
Therefore, $\pi _{T}$ is surjective and then is an isomorphism.
\end{proof}

\begin{remark}
\label{remark-directed-graph-and-locally-convex}Finitely aligned $k$-graphs
include $1$-graphs and row-finite $k$-graphs with no sources. Further, in
these cases, the boundary path groupoid $\mathcal{G}_{\Lambda }$ of Example %
\ref{groupouid-Glambda} coincides with $\mathcal{G}_{E}$ of \cite{CS15} and $%
\mathcal{G}_{\Lambda }$ of \cite{CFST14}. Thus, we have generalised Example
3.2 of \cite{CS15} and Proposition 4.3 of \cite{CFST14}. For locally convex
row-finite $k$-graphs, our construction gives a Steinberg algebra model of
the Kumjian-Pask algebras of \cite{CFaH14}.
\end{remark}

\section{Aperiodic higher-rank graphs and effective groupoids}

\label{Section-aperiodic-effective}In this section and Section \ref%
{Section-cofinal-minimal}, we investigate the relationship between a $k$%
-graph $\Lambda $ and its boundary-path groupoid $\mathcal{G}_{\Lambda }$ as
constructed in Example \ref{groupouid-Glambda}. We expect the Cuntz-Krieger
uniqueness theorem (Theorem \ref{the-CK-uniqueness-theorem}) to apply only
to \emph{aperiodic} finitely aligned $k$-graphs (definition below). On the
other hand, \emph{effective} groupoids (definition below) are needed in the
hypothesis of the Cuntz-Krieger uniqueness theorem for Steinberg algebras
(Theorem \ref{the-CK-uniqueness-theorem-for-Steinberg-algebras}). In this
section, our main result is Proposition \ref{aperiodic-iff-effective} which
says that a finitely aligned $k$-graph $\Lambda $ is aperiodic if and only
if the boundary-path groupoid $\mathcal{G}_{\Lambda }$ is effective.

We say a boundary path $x$ is \emph{aperiodic }if for all $\lambda ,\mu \in
\Lambda r\left( x\right) $, $\lambda \neq \mu $ implies $\lambda x\neq \mu x$%
. We say a finitely aligned $k$-graph $\Lambda $ is \emph{aperiodic }if for
each $v\in \Lambda ^{0}$, there exists an aperiodic boundary path $x$ with $%
r\left( x\right) =v$.

\begin{remark}
There are several equivalent ways to define the aperiodicity condition for
finitely aligned $k$-graphs (see \cite{FMY05,LS10,RSY04,Sh12}). However,
those definitions are equivalent by \cite[Proposition 3.6]{LS10} and \cite[%
Proposition 2.11]{Sh12}. The definition we use is called Condition (B$%
^{\prime }$) in \cite[Remark 7.3]{FMY05} and \cite[Definition 2.1.(ii)]{Sh12}%
.
\end{remark}

\begin{remark}
For $1$-graphs, the aperiodicity condition is known as Condition (L), which,
using our conventions, says that every cycle has an entry (see \cite%
{A15,AA08,BPRS00,KPR98,CBMS,T07,T11}).
\end{remark}

Next let $\mathcal{G}$ be a toplogical groupoid. Define $\operatorname{Iso}\left(
\mathcal{G}\right) $ the \emph{isotropy groupoid}\ of $\mathcal{G}$ by%
\begin{equation*}
\operatorname{Iso}\left( \mathcal{G}\right) :=\left\{ a\in \mathcal{G}:s\left(
a\right) =r\left( a\right) \right\} \text{.}
\end{equation*}%
We then say $\mathcal{G}$ is \emph{effective} if the interior of $\operatorname{Iso}%
\left( \mathcal{G}\right) $ is $\mathcal{G}^{\left( 0\right) }$. See \cite[%
Lemma 3.1]{BCFS14} for some equivalent characterisations.

\begin{proposition}
\label{aperiodic-iff-effective}Let $\Lambda $ be a finitely aligned $k$%
-graph. Then $\Lambda $ is aperiodic if and only if the boundary-path
groupoid $\mathcal{G}_{\Lambda }$ is effective.
\end{proposition}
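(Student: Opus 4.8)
The plan is to rephrase effectiveness in terms of the explicit description of $\mathcal{G}_\Lambda$ from Example~\ref{groupouid-Glambda} and then to reduce both implications to two elementary facts about boundary paths. The first fact I would establish is that \emph{aperiodicity is preserved under prepending}: if $x\in\partial\Lambda$ is aperiodic and $\gamma\in\Lambda r(x)$, then $\gamma x$ is aperiodic. This follows from right cancellation in $\Lambda$ (a consequence of the factorisation property), since any collision $\alpha(\gamma x)=\beta(\gamma x)$ with $\alpha\neq\beta$ produces $(\alpha\gamma)x=(\beta\gamma)x$ with $\alpha\gamma\neq\beta\gamma$, contradicting aperiodicity of $x$. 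The second fact is the \emph{isotropy inclusion}: every non-aperiodic boundary path $x$ has non-trivial isotropy. Indeed, if $\lambda x=\mu x$ with $\lambda\neq\mu$ and $s(\lambda)=s(\mu)=r(x)$, then setting $p:=d(\lambda)$, $q:=d(\mu)$ and $z:=\lambda x=\mu x$ gives $\sigma^{p}z=\sigma^{q}z=x$. Here $p\neq q$, and the coordinates where $p$ and $q$ differ are exactly coordinates in which $d(z)$, hence $d(x)$, is infinite, so the shifts below are defined and $\sigma^{(q-p)\vee 0}x=\sigma^{p\vee q}z=\sigma^{(p-q)\vee 0}x$ is a genuine isotropy element with $m=(q-p)\vee 0-(p-q)\vee 0=q-p\neq 0$. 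Equivalently, every boundary path with trivial isotropy is aperiodic.

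For the forward implication I would assume $\Lambda$ is aperiodic but $\mathcal{G}_\Lambda$ is not effective. Since $\mathcal{G}_\Lambda^{(0)}$ is clopen, the open set $\operatorname{int}(\operatorname{Iso}(\mathcal{G}_\Lambda))\setminus\mathcal{G}_\Lambda^{(0)}$ is nonempty and so contains a nonempty basic bisection $Z_\Lambda(\lambda\ast_s\mu\setminus G)\subseteq\operatorname{Iso}(\mathcal{G}_\Lambda)$. Disjointness from the unit space forces $d(\lambda)\neq d(\mu)$ (so $\lambda\neq\mu$), and containment in the isotropy forces $\lambda w=\mu w$ for every $w\in Z_\Lambda(s(\lambda)\setminus G)$, using the parametrisation $(x,m,y)=(\lambda w,\,d(\lambda)-d(\mu),\,\mu w)$ with $w\in Z_\Lambda(s(\lambda)\setminus G)$. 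Nonemptiness makes $G$ finite and non-exhaustive, so by Remark~\ref{remark-of-groupouid-Glambda} there is $\gamma\in s(\lambda)\Lambda$ with $\Lambda^{\min}(\gamma,\nu)=\emptyset$ for all $\nu\in G$; choosing an aperiodic boundary path $z$ at $s(\gamma)$ and putting $w:=\gamma z$, the prepending fact makes $w$ aperiodic, while the choice of $\gamma$ keeps $w$ outside every $Z_\Lambda(\nu)$, so $w\in Z_\Lambda(s(\lambda)\setminus G)$. Then $\lambda w=\mu w$ with $\lambda\neq\mu$ and $s(\lambda)=s(\mu)=r(w)$ contradicts aperiodicity of $w$, so $\mathcal{G}_\Lambda$ must be effective.

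For the reverse implication I would invoke \cite[Lemma 3.1]{BCFS14}: as $\mathcal{G}_\Lambda$ is second-countable, Hausdorff and \'etale, effectiveness is equivalent to being topologically principal, i.e.\ to the set of units with trivial isotropy being dense in $\mathcal{G}_\Lambda^{(0)}=\partial\Lambda$. By the isotropy inclusion every such unit is an aperiodic boundary path, so the aperiodic boundary paths are dense. Since $v\partial\Lambda=Z_\Lambda(v)$ is a nonempty open subset of $\partial\Lambda$ for each $v\in\Lambda^{0}$, it meets the aperiodic paths, yielding an aperiodic boundary path with range $v$; hence $\Lambda$ is aperiodic.

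The hard part will be the reverse direction, where the real difficulty is uniformity: passing from the pointwise statement ``every boundary path at $v$ is non-aperiodic'' to a single open bisection contained in the isotropy is precisely the Baire-category content packaged in the equivalence of effectiveness and topological principality in \cite[Lemma 3.1]{BCFS14}, which is why I would route that implication through that result rather than argue it by hand. The other delicate point is the isotropy-inclusion fact, where I must track carefully which coordinates of $d(x)$ are infinite so that the shifts defining the isotropy element are actually defined.
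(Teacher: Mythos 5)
Your proof is correct and takes essentially the same route as the paper's: both directions rest on the same two facts (prepending a path onto an aperiodic boundary path preserves aperiodicity, proved via the factorisation property; and a non-aperiodic boundary path carries nontrivial isotropy, via the same shift computation with $p\vee q$), and your appeal to the equivalence of effectiveness and topological principality for second-countable Hausdorff \'etale groupoids via \cite[Lemma 3.1]{BCFS14} is exactly the paper's appeal to \cite[Proposition 3.6(b)]{R08}. The only cosmetic differences are that you phrase both implications contrapositively and leave implicit the one-line check that $d(\lambda)\neq d(\mu)$ (equivalently $p\neq q$), which follows since $d(\lambda)=d(\mu)$ and $\lambda x=\mu x$ would force $\lambda=\mu$ by the factorisation property.
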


\begin{proof}
$\left( \Rightarrow \right) $ First suppose that $\Lambda $ is aperiodic. We
trivially have $\mathcal{G}_{\Lambda }^{\left( 0\right) }$ belongs to the
interior of $\operatorname{Iso}\left( \mathcal{G}_{\Lambda }\right) $. Now we show
the reverse inclusion. Take $a$ an interior point of $\operatorname{Iso}\left(
\mathcal{G}_{\Lambda }\right) $. Then there exits $Z_{\Lambda }\left( \left.
\lambda \ast _{s}\mu \right\backslash G\right) $ such that $Z_{\Lambda
}\left( \left. \lambda \ast _{s}\mu \right\backslash G\right) \subseteq
\operatorname{Iso}\left( \mathcal{G}_{\Lambda }\right) $ and $a\in Z_{\Lambda
}\left( \left. \lambda \ast _{s}\mu \right\backslash G\right) $. We show $%
\lambda =\mu $.

Note that since $a\in Z_{\Lambda }\left( \left. \lambda \ast _{s}\mu
\right\backslash G\right) $, then $Z_{\Lambda }\left( \left. \lambda \ast
_{s}\mu \right\backslash G\right) $ is not empty and by Remark \ref%
{remark-of-groupouid-Glambda}.(ii), $G$ is not exhaustive. Hence, there
exists $\nu \in s\left( \lambda \right) \Lambda $ such that $\Lambda ^{\min
}\left( \nu ,\gamma \right) =\emptyset $ for $\gamma \in G$. Because $%
\Lambda $ is aperiodic, there exists a aperiodic boundary path $x\in s\left(
\nu \right) \partial \Lambda .$

We claim that the boundary path $\nu x$ is also aperiodic. Suppose for
contradiction that there exists $\lambda ^{\prime },\mu ^{\prime }\in
\Lambda r\left( \nu x\right) $ such that $\lambda ^{\prime }\neq \mu
^{\prime }$ and
\begin{equation}
\lambda ^{\prime }\left( \nu x\right) =\mu ^{\prime }\left( \nu x\right) .
\label{equ-lambdanux-equal-munux}
\end{equation}%
Since $\lambda ^{\prime },\mu ^{\prime },\nu \in \Lambda $, by unique the
factorisation property we have $\lambda ^{\prime }\neq \mu ^{\prime }$
implies $\lambda ^{\prime }\nu \neq \mu ^{\prime }\nu $. Now because $x$ is
aperiodic, $\lambda ^{\prime }\nu \neq \mu ^{\prime }\nu $ implies $\lambda
^{\prime }\nu \left( x\right) \neq \mu ^{\prime }\nu \left( x\right) $,
which contradicts \eqref{equ-lambdanux-equal-munux}. Hence, $\nu x$ is
aperiodic, as claimed.

Since $\lambda \nu x\in \left. Z_{\Lambda }\left( \lambda \right)
\right\backslash Z_{\Lambda }\left( \lambda \gamma \right) $ and $\mu \nu
x\in \left. Z_{\Lambda }\left( \mu \right) \right\backslash Z_{\Lambda
}\left( \mu \gamma \right) $ for $\gamma \in G$, we have
\begin{equation*}
\left( \lambda \nu x,d\left( \lambda \right) -d\left( \mu \right) ,\mu \nu
x\right) \in Z_{\Lambda }\left( \left. \lambda \ast _{s}\mu \right\backslash
G\right) \text{.}
\end{equation*}%
Thus $Z_{\Lambda }\left( \left. \lambda \ast _{s}\mu \right\backslash
G\right) \subseteq \operatorname{Iso}\left( \mathcal{G}_{\Lambda }\right) $, and
hence $\lambda \nu x=\mu \nu x$. Since $\nu x$ is aperiodic, we have $%
\lambda \left( \nu x\right) =\mu \left( \nu x\right) $ which implies $%
\lambda =\mu $. Therefore, $\mathcal{G}_{\Lambda }$ is effective.

$\left( \Leftarrow \right) $ Now suppose that $\Lambda $ is not aperiodic.
Then there exists $v\in \Lambda ^{0}$ such that for all boundary path $x\in
v\partial \Lambda $, $x$ is not aperiodic.

\begin{claim}
\label{xGx-not-equal-x}For $x\in v\partial \Lambda $, we have $x\mathcal{G}%
_{\Lambda }x\neq \left\{ x\right\} $.
\end{claim}

\begin{proof}[Proof of Claim~\protect\ref{xGx-not-equal-x}]
Take $x\in v\partial \Lambda $. Since $x$ is not aperiodic, then there exist
$\lambda ,\mu \in \Lambda r\left( x\right) $ such that $\lambda \neq \mu $
and $\lambda x=\mu x$. If $d\left( \lambda \right) =d\left( \mu \right) $,
then
\begin{equation*}
\lambda =\left( \lambda x\right) \left( 0,d\left( \lambda \right) \right)
=\left( \mu x\right) \left( 0,d\left( \mu \right) \right) =\mu \text{,}
\end{equation*}%
which contradicts with $\lambda \neq \mu $.

So suppose $d\left( \lambda \right) \neq d\left( \mu \right) $. Note that
for $1\leq i\leq k$ such that $d\left( \lambda \right) _{i}\neq d\left( \mu
\right) _{i}$, we have $d\left( x\right) _{i}=\infty $ (since $\lambda x=\mu
x$). Hence%
\begin{equation*}
\left( \left( d\left( \lambda \right) \vee d\left( \mu \right) \right)
-d\left( \lambda \right) \right) \vee \left( \left( d\left( \lambda \right)
\vee d\left( \mu \right) \right) -d\left( \mu \right) \right) \leq d\left(
x\right) \text{.}
\end{equation*}%
Write $p:=\left( d\left( \lambda \right) \vee d\left( \mu \right) \right)
-d\left( \lambda \right) $ and $q:=\left( d\left( \lambda \right) \vee
d\left( \mu \right) \right) -d\left( \mu \right) $. Then
\begin{align*}
\sigma ^{p}x &=\sigma ^{p}\left( \sigma ^{d\left( \lambda \right) }\left(
\lambda x\right) \right) =\sigma ^{d\left( \lambda \right) \vee d\left( \mu
\right) }\left( \lambda x\right) \\
&=\sigma ^{d\left( \lambda \right) \vee d\left( \mu \right) }\left( \mu
x\right) \text{ (since }\lambda x=\mu x\text{)} \\
&=\sigma ^{q}\left( \sigma ^{d\left( \mu \right) }\left( \mu x\right)
\right) =\sigma ^{q}x
\end{align*}%
and $p\neq q$ (since $d\left( \lambda \right) \neq d\left( \mu \right) $).
This implies $\left( x,p-q,x\right) \in \left. \mathcal{G}_{\Lambda
}\right\backslash \mathcal{G}_{\Lambda }^{\left( 0\right) }$ and $x\mathcal{G%
}_{\Lambda }x\neq \left\{ x\right\} $. \hfil\penalty100\hbox{}\nobreak\hfill
\hbox{\qed\
Claim~\ref{xGx-not-equal-x}} \renewcommand\qed{}
\end{proof}

Since $x\mathcal{G}_{\Lambda }x\neq \left\{ x\right\} $ for all $x\in
v\partial \Lambda $, then
\begin{equation*}
Z_{\Lambda }\left( v\right) \cap \{z\in \mathcal{G}_{\Lambda }^{\left(
0\right) }:z\mathcal{G}_{\Lambda }z=\left\{ z\right\} \}=\emptyset
\end{equation*}%
and $\{z\in \mathcal{G}_{\Lambda }^{\left( 0\right) }:z\mathcal{G}_{\Lambda
}z=\left\{ z\right\} \}$ is not dense in $\mathcal{G}_{\Lambda }^{\left(
0\right) }$. Since $\mathcal{G}_{\Lambda }$ is locally compact,
second-countable, Hausdorff and \'{e}tale, then by \cite[Proposition 3.6.(b)]%
{R08}, $\mathcal{G}_{\Lambda }$ is not effective, as required.
\end{proof}

\begin{remark}
In fact, for a finitely aligned $k$-graph $\Lambda $, the following five
conditions are equivalent:

\begin{enumerate}
\item[(a)] $\mathcal{G}_{\Lambda }$ is effective.

\item[(b)] $\mathcal{G}_{\Lambda }$ is \emph{topologically principal } in
that the set of units with trivial isotropy is dense in $\mathcal{G}^{(0)}$.

\item[(c)] $\mathcal{G}_{\Lambda }$ satisfies Condition (1) of Theorem 5.1
of \cite{RSWY12}.

\item[(d)] $\Lambda $ has \emph{no local periodicity} as defined in \cite%
{Sh12}.

\item[(e)] $\Lambda $ is aperiodic.
\end{enumerate}

In \cite[Proposition 3.6]{R08}, Renault shows that for a locally compact,
second-countable, Hausdorff, \'{e}tale $\mathcal{G}$, $\mathcal{G}$ is
effective if and only if it is topologically principle. Since the
boundary-path groupoid $\mathcal{G}_{\Lambda }$ is locally compact,
second-countable, Hausdorff and \'{e}tale, then (a)$\Leftrightarrow $(b).
Meanwhile, in \cite[Theorem 5.2]{Y07}, Yeend proves (b)$\Leftrightarrow $%
(c). [Note that Yeend uses notion \textquotedblleft \emph{essentially free}%
\textquotedblright\ instead of \textquotedblleft topologically
principal\textquotedblright .] Lemma 5.6 of \cite{RSWY12} gives (c)$%
\Leftrightarrow $(d). Finally, (d)$\Leftrightarrow $(e) follows from \cite[%
Proposition 2.11]{Sh12}.
\end{remark}

\section{Cofinal higher-rank graphs and minimal groupoids}

\label{Section-cofinal-minimal} In this section, we show that a finitely
aligned $k$-graph $\Lambda $ is cofinal if and only if the boundary-path
groupoid $\mathcal{G}_{\Lambda }$ is minimal (Proposition \ref%
{cofinal-iff-minimal}). Later, we use this relationship to study the
simplicity of Kumjian-Pask algebras in Section \ref%
{Section-Basic-Simpllicity}.

Recall from \cite[Definition 8.4]{Si06(G)} that we say a $k$-graph $\Lambda $
is \emph{cofinal} if for all $v\in \Lambda ^{0}$ and $x\in \partial \Lambda $%
, there exists $n\leq d\left( x\right) $ such that $v\Lambda x\left(
n\right) \neq \emptyset $.

In a groupoid $\mathcal{G}$, a subset $U\subseteq \mathcal{G}^{\left(
0\right) }$ is called \emph{invariant}\ if $s\left( a\right) \in U$ implies $%
r\left( a\right) \in U$ for all $a\in \mathcal{G}$. Note that $U$ is
invariant if and only if $\mathcal{G}^{\left( 0\right) }\backslash U$ is
invariant. We then say a topological groupoid $\mathcal{G}$ is \emph{minimal}
if $\mathcal{G}^{\left( 0\right) }$ has no nontrivial open invariant
subsets. Equivalently, $\mathcal{G}$ is minimal if for each $x\in \mathcal{G}%
^{\left( 0\right) }$, the orbit $\left[ x\right] :=s\left( x\mathcal{G}%
\right) $ is dense in $\mathcal{G}^{\left( 0\right) }$.

\begin{proposition}
\label{cofinal-iff-minimal}Let $\Lambda $ be a finitely aligned $k$-graph.
Then $\Lambda $ is cofinal if and only if the boundary-path groupoid $%
\mathcal{G}_{\Lambda }$ is minimal.
\end{proposition}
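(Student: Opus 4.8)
The plan is to use the orbit characterisation of minimality recorded above: $\mathcal{G}_\Lambda$ is minimal if and only if for every $x\in\partial\Lambda=\mathcal{G}_\Lambda^{(0)}$ the orbit $[x]=s(x\mathcal{G}_\Lambda)$ is dense in $\partial\Lambda$. Unwinding the description of the morphisms in Example~\ref{groupouid-Glambda} gives
\[
[x]=\{y\in\partial\Lambda:\sigma^{p}x=\sigma^{q}y\text{ for some }p\leq d(x),\,q\leq d(y)\},
\]
so $[x]$ consists exactly of the boundary paths that are shift-tail equivalent to $x$. Since the sets $Z_\Lambda(\lambda\backslash G)$ form a basis of open sets for $\partial\Lambda$, proving density of $[x]$ reduces to showing that $[x]$ meets every nonempty basic open set $Z_\Lambda(\lambda\backslash G)$.

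For the forward direction, I would assume $\Lambda$ is cofinal, fix $x\in\partial\Lambda$, and take a nonempty $Z_\Lambda(\lambda\backslash G)$. By Remark~\ref{remark-of-groupouid-Glambda}.(ii) the set $G$ is non-exhaustive, so I can choose $\tau\in s(\lambda)\Lambda$ with $\Lambda^{\min}(\tau,\nu)=\emptyset$ for every $\nu\in G$. A short argument then shows $\lambda\tau\partial\Lambda\subseteq Z_\Lambda(\lambda\backslash G)$: any boundary path lying in both $\lambda\tau\partial\Lambda$ and some $\lambda\nu\partial\Lambda$ would yield a common extension of $\tau$ and $\nu$, contradicting the choice of $\tau$. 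Next I apply cofinality to the vertex $s(\tau)$ and the path $x$ to obtain $n\leq d(x)$ and a path $\beta\in s(\tau)\Lambda x(n)$, and I set $y:=\lambda\tau\beta(\sigma^{n}x)$. Using that $\sigma^{n}x\in\partial\Lambda$ and that $\gamma z\in\partial\Lambda$ whenever $\gamma\in\Lambda z(0)$ (\cite[Lemma 5.13]{FMY05}), this $y$ is a boundary path; by construction $y\in\lambda\tau\partial\Lambda\subseteq Z_\Lambda(\lambda\backslash G)$, while $\sigma^{d(\lambda\tau\beta)}y=\sigma^{n}x$ gives $y\in[x]$. Hence $[x]$ meets $Z_\Lambda(\lambda\backslash G)$, and $\mathcal{G}_\Lambda$ is minimal.

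For the converse I would argue by contrapositive. Assuming $\Lambda$ is not cofinal, I pick $v\in\Lambda^{0}$ and $x\in\partial\Lambda$ with $v\Lambda x(n)=\emptyset$ for all $n\leq d(x)$, and I claim the orbit $[x]$ misses the nonempty open set $v\partial\Lambda=Z_\Lambda(v)$, so $[x]$ is not dense and $\mathcal{G}_\Lambda$ is not minimal. Indeed, if some $y\in[x]$ had $r(y)=v$, then $\sigma^{p}x=\sigma^{q}y$ for some $p\leq d(x)$, $q\leq d(y)$, whence $y(0,q)$ is a path in $v\Lambda$ with source $y(q)=x(p)$; that is, $y(0,q)\in v\Lambda x(p)$ with $p\leq d(x)$, contradicting the failure of cofinality. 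Here I use that $v\partial\Lambda$ is nonempty by \cite[Lemma 5.15]{FMY05}.

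The main obstacle is the explicit construction in the forward direction, where a single boundary path must simultaneously lie in the prescribed basic open set $Z_\Lambda(\lambda\backslash G)$ and be shift-tail equivalent to $x$. Reconciling these two requirements is precisely where non-exhaustiveness of $G$ enters — it supplies the extension $\tau$ that keeps the constructed path inside $Z_\Lambda(\lambda\backslash G)$ — while cofinality provides the bridging path $\beta$ that merges the initial segment $\lambda\tau$ with a tail of $x$. Once $\lambda\tau\partial\Lambda\subseteq Z_\Lambda(\lambda\backslash G)$ is established, the verifications that $y$ is a boundary path and lies in $[x]$ are routine.
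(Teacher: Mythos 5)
Your proof is correct and takes essentially the same approach as the paper's: in the forward direction you construct the boundary path $y=\lambda\tau\beta(\sigma^{n}x)$ exactly as the paper constructs $\lambda\nu\mu[\sigma^{n}x]$, using non-exhaustiveness of $G$ to keep the path inside $Z_{\Lambda}(\lambda\backslash G)$ and cofinality to splice onto a tail of $x$, and your contrapositive argument for the converse matches the paper's verbatim. The only cosmetic differences are that you isolate the inclusion $\lambda\tau\partial\Lambda\subseteq Z_{\Lambda}(\lambda\backslash G)$ as a separate claim and explicitly invoke nonemptiness of $v\partial\Lambda$, which the paper leaves implicit.
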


\begin{proof}
$\left( \Rightarrow \right) $ Suppose that $\Lambda $ is cofinal. Take $x\in
\mathcal{G}_{\Lambda }^{\left( 0\right) }$. We have to show that $\left[ x%
\right] $ is dense in $\mathcal{G}_{\Lambda }^{\left( 0\right) }$. Take a
nonempty open set $Z_{\Lambda }\left( \left. \lambda \right\backslash
G\right) $ and we claim that $Z_{\Lambda }\left( \left. \lambda
\right\backslash G\right) \cap \left[ x\right] \neq \emptyset $. Since $%
Z_{\Lambda }\left( \left. \lambda \right\backslash G\right) $ is nonempty,
we have that $G$ is not exhaustive (see Remark \ref%
{remark-of-groupouid-Glambda}.(i)). Then there exists $\nu \in s\left(
\lambda \right) \Lambda $ such that $\Lambda ^{\min }\left( \nu ,\gamma
\right) =\emptyset $ for $\gamma \in G$. Now consider the vertex $s\left(
\lambda \nu \right) $ and the boundary path $x$. Since $\Lambda $ is
cofinal, then there exists $n\leq d\left( x\right) $ such that $s\left(
\lambda \nu \right) \Lambda x\left( n\right) \neq \emptyset $. Take $\mu \in
s\left( \lambda \nu \right) \Lambda x\left( n\right) $. Because $x$ is a
boundary path, so is $\sigma ^{n}x$. Hence,
\begin{equation*}
y:=\lambda \nu \mu \left[ \sigma ^{n}x\right]
\end{equation*}%
is also a boundary path. It is clear that $y\in Z_{\Lambda }\left( \lambda
\right) $ and since $\Lambda ^{\min }\left( \nu ,\gamma \right) =\emptyset $
for $\gamma \in G$, we have $y\notin Z_{\Lambda }\left( \lambda \gamma
\right) $ for $\gamma \in G$. Hence, $y\in Z_{\Lambda }\left( \left. \lambda
\right\backslash G\right) $.

On the other hand, since $y=\lambda \nu \mu \left[ \sigma ^{n}x\right] $,
then $\left( x,n-d\left( \lambda \nu \mu \right) ,y\right) \in \mathcal{G}%
_{\Lambda }$ and $y\in \left[ x\right] $. Therefore, $Z_{\Lambda }\left(
\left. \lambda \right\backslash G\right) \cap \left[ x\right] \neq \emptyset
$. Thus, $\left[ x\right] $ is dense in $\mathcal{G}_{\Lambda }^{\left(
0\right) }$ and $\mathcal{G}_{\Lambda }$ is minimal.

$\left( \Leftarrow \right) $ Suppose that $\Lambda $ is not cofinal. Then
there exist $v\in \Lambda ^{0}$ and $x\in \partial \Lambda $ such that for
all $n\leq d\left( x\right) ,$ we have $v\Lambda x\left( n\right) =\emptyset
$. We claim $Z_{\Lambda }\left( v\right) \cap \left[ x\right] =\emptyset $.
Suppose for contradiction that $Z_{\Lambda }\left( v\right) \cap \left[ x%
\right] \neq \emptyset $. Take $y\in Z_{\Lambda }\left( v\right) \cap \left[
x\right] $. Because $y\in \left[ x\right] $, then there exists $p,q\in
\mathbb{N}^{k}$ such that $\left( x,p-q,y\right) \in \mathcal{G}_{\Lambda }$%
. This implies $\sigma ^{p}x=\sigma ^{q}y$. Since $y\in Z_{\Lambda }\left(
v\right) $, then $r\left( y\right) =v$. Hence, $\sigma ^{p}x=\sigma ^{q}y$
and $r\left( y\right) =v$ imply that $y\left( 0,q\right) $ belongs to $%
v\Lambda x\left( p\right) $, which contradicts with $v\Lambda x\left(
n\right) =\emptyset $ for all $n\leq d\left( x\right) $. Therefore, $%
Z_{\Lambda }\left( v\right) \cap \left[ x\right] =\emptyset $, as claimed,
and $\left[ x\right] $ is not dense in $\mathcal{G}_{\Lambda }^{\left(
0\right) }$. Thus, $\mathcal{G}_{\Lambda }$ is not minimal.
\end{proof}

\section{The Cuntz-Krieger uniqueness theorem}

\label{Section-the-CK-uniqueness-theorem}Throughout this section, $\Lambda $
is a finitely aligned $k$-graph and $R$ is a commutative ring with identity $%
1$.

\begin{theorem}[The Cuntz-Krieger uniqueness theorem]
\label{the-CK-uniqueness-theorem}Let $\Lambda $ be an aperiodic finitely
aligned $k$-graph, $R$ be a commutative ring with $1$. Suppose that $\pi :%
{\normalsize \operatorname{KP}}_{R}\left( \Lambda \right) \rightarrow A$ is an $R$%
-algebra homomorphism such that $\pi \left( rs_{v}\right) \neq 0$ for all $%
r\in \left. R\right\backslash \left\{ 0\right\} $ and $v\in \Lambda ^{0}$.
Then $\pi $ is injective.
\end{theorem}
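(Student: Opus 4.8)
The plan is to carry the statement across the isomorphism $\pi_{T}\colon \operatorname{KP}_{R}(\Lambda)\to A_{R}(\mathcal{G}_{\Lambda})$ of Proposition~\ref{KP-is-isomorphic-to-Steinberg-algebras} and then apply the Cuntz--Krieger uniqueness theorem for Steinberg algebras \cite[Theorem~3.2]{CE-M15}. Set $\phi:=\pi\circ\pi_{T}^{-1}\colon A_{R}(\mathcal{G}_{\Lambda})\to A$; since $\pi_{T}$ is an isomorphism, $\pi$ is injective if and only if $\phi$ is, so it suffices to show $\phi$ is injective. As $\Lambda$ is aperiodic, Proposition~\ref{aperiodic-iff-effective} shows $\mathcal{G}_{\Lambda}$ is effective, and $\mathcal{G}_{\Lambda}$ is Hausdorff and ample by Example~\ref{groupouid-Glambda}. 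Thus the hypotheses of \cite[Theorem~3.2]{CE-M15} hold, and that theorem reduces the injectivity of $\phi$ to verifying that $\phi(r\,1_{K})\neq 0$ for every $r\in R\setminus\{0\}$ and every nonempty compact open $K\subseteq\mathcal{G}_{\Lambda}^{(0)}$.

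I would first check this nonvanishing on the basic sets $Z_{\Lambda}(\lambda\setminus G)$ that form a basis for $\mathcal{G}_{\Lambda}^{(0)}$. Setting $\mu=\lambda$ in the computation \eqref{equ-1_Z(lambda*mu-G)} identifies
\[
\pi_{T}^{-1}\big(1_{Z_{\Lambda}(\lambda\setminus G)}\big)=s_{\lambda}\Big(\prod_{\nu\in G}\big(s_{s(\lambda)}-s_{\nu}s_{\nu^{\ast}}\big)\Big)s_{\lambda^{\ast}},
\]
using that $Z_{\Lambda}(\lambda\ast_{s}\lambda\setminus G)$ is exactly the unit-space set $Z_{\Lambda}(\lambda\setminus G)$ under $x\mapsto(x,0,x)$. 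Hence $\phi(r\,1_{Z_{\Lambda}(\lambda\setminus G)})$ is $\pi$ applied to $r$ times this element. The family $\{\pi(s_{\lambda}),\pi(s_{\mu^{\ast}})\}$ is the image of a Kumjian--Pask $\Lambda$-family under a homomorphism and so is itself a Kumjian--Pask $\Lambda$-family satisfying $\pi(rs_{v})\neq0$ for all $r\neq0$ and $v\in\Lambda^{0}$. Because $Z_{\Lambda}(\lambda\setminus G)$ is nonempty, $G$ is non-exhaustive by Remark~\ref{remark-of-groupouid-Glambda}.(ii), so Proposition~\ref{properties-of-KP}.(d) applies to this family and gives $\phi(r\,1_{Z_{\Lambda}(\lambda\setminus G)})\neq0$.

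To pass to an arbitrary nonempty compact open $K\subseteq\mathcal{G}_{\Lambda}^{(0)}$, I would use that $K$ is open and that the sets $Z_{\Lambda}(\lambda\setminus G)$ form a basis, so $K$ contains a nonempty basic set $Z:=Z_{\Lambda}(\lambda\setminus G)$. On the unit space convolution is pointwise multiplication, whence $(r\,1_{K})\star 1_{Z}=r\,1_{Z}$; applying $\phi$ yields $\phi(r\,1_{K})\,\phi(1_{Z})=\phi(r\,1_{Z})\neq0$, which forces $\phi(r\,1_{K})\neq0$. This verifies the hypothesis of \cite[Theorem~3.2]{CE-M15}, so $\phi$, and therefore $\pi$, is injective.

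I expect the only genuinely delicate points to be bookkeeping rather than substance: confirming that $Z_{\Lambda}(\lambda\ast_{s}\lambda\setminus G)$ coincides with the unit-space set $Z_{\Lambda}(\lambda\setminus G)$ so that \eqref{equ-1_Z(lambda*mu-G)} may be read as an identity in $A_{R}(\mathcal{G}_{\Lambda})$ supported on the unit space, and matching the precise nonvanishing condition demanded by \cite[Theorem~3.2]{CE-M15}. All of the real combinatorial and analytic work has already been absorbed into Propositions~\ref{KP-is-isomorphic-to-Steinberg-algebras} and~\ref{aperiodic-iff-effective} and into Proposition~\ref{properties-of-KP}.(d), so the argument here is essentially an assembly of these.
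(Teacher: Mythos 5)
Your proposal is correct and follows essentially the same route as the paper's proof: reduce via the isomorphism $\pi_{T}$ of Proposition~\ref{KP-is-isomorphic-to-Steinberg-algebras}, invoke Proposition~\ref{aperiodic-iff-effective} and \cite[Theorem~3.2]{CE-M15}, identify $1_{Z_{\Lambda}\left( \left. \lambda \right\backslash G\right) }$ with the image of $s_{\lambda }\big(\prod_{\nu \in G}\left( s_{s\left( \lambda \right) }-s_{\nu }s_{\nu ^{\ast }}\right) \big)s_{\lambda ^{\ast }}$ via \eqref{equ-1_Z(lambda*mu-G)}, and finish with Proposition~\ref{properties-of-KP}.(d). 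The only differences are presentational: you argue the contrapositive directly and inline the passage from an arbitrary compact open set to a basic set, whereas the paper isolates that step as Theorem~\ref{the-CK-uniqueness-theorem-for-Steinberg-algebras} and argues by contradiction.
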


We show Theorem \ref{the-CK-uniqueness-theorem} by using the Cuntz-Krieger
uniqueness theorem for Steinberg algebras \cite[Theorem 3.2]{CE-M15}. First
we verify an alternate formulation of the Cuntz-Krieger uniqueness theorem
for Steinberg algebras that will be useful.

\begin{theorem}
\label{the-CK-uniqueness-theorem-for-Steinberg-algebras}Let $\mathcal{G}$ be
an effective, Hausdorff, ample groupoid, and $R$ be a commutative ring with $%
1$. Let $\mathcal{B}$ be a basis of compact open bisection for the topology
on $\mathcal{G}$. Let $\phi :A_{R}\left( \mathcal{G}\right) \rightarrow A$
be an $R$-algebra homomorphism. Suppose that $\ker \left( \phi \right) \neq
0 $. Then there exist $r\in \left. R\right\backslash \left\{ 0\right\} $ and
$B\in \mathcal{B}$ such that $B\subseteq \mathcal{G}^{\left( 0\right) }$ and
$\phi \left( r1_{B}\right) =0$.
\end{theorem}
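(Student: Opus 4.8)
The plan is to deduce this refined statement directly from the Cuntz--Krieger uniqueness theorem for Steinberg algebras \cite[Theorem 3.2]{CE-M15}. Applied to the effective, Hausdorff, ample groupoid $\mathcal{G}$, that theorem tells us (in contrapositive form) that if $\ker(\phi)\neq 0$, then there is a \emph{nonzero} $f\in A_{R}(\mathcal{G}^{(0)})$ with $\phi(f)=0$, where $A_{R}(\mathcal{G}^{(0)})$ denotes the subalgebra of functions supported on the unit space. So the first step is simply to invoke \cite[Theorem 3.2]{CE-M15} to produce such an $f$. The remaining task is to replace $f$ by a scalar multiple of a single basic characteristic function.

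Next I would isolate a point and a value. Since $f\neq 0$, choose $x\in\mathcal{G}$ with $f(x)=r\neq 0$. Because $f$ is supported on $\mathcal{G}^{(0)}$ and $r\neq 0$, the point $x$ lies in $\mathcal{G}^{(0)}$; and because $f$ is locally constant, the level set $V:=f^{-1}(\{r\})$ is open and, as every point of $V$ lies in $\operatorname{supp}(f)\subseteq\mathcal{G}^{(0)}$, we have $V\subseteq\mathcal{G}^{(0)}$. Since $\mathcal{B}$ is a basis for the topology of $\mathcal{G}$, there is $B\in\mathcal{B}$ with $x\in B\subseteq V$; in particular $B\subseteq\mathcal{G}^{(0)}$ and $f\equiv r$ on $B$.

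The key computation is that convolution against $1_{B}$ merely restricts $f$ when $B$ lies in the unit space. For $B\subseteq\mathcal{G}^{(0)}$ and any $f$ supported on $\mathcal{G}^{(0)}$, and for $a\in\mathcal{G}$, one has
\[
(1_{B}\star f)(a)=\sum_{r(b)=r(a)}1_{B}(b)\,f(b^{-1}a)=1_{B}(r(a))\,f(a),
\]
since the only $b\in B$ with $r(b)=r(a)$ is the unit $b=r(a)$, for which $b^{-1}a=a$. Evaluating, $(1_{B}\star f)(a)$ equals $f(a)$ when $a\in B$ and $0$ otherwise (using that $f$ vanishes off $\mathcal{G}^{(0)}$), so $1_{B}\star f=r\,1_{B}$ because $f\equiv r$ on $B$. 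Applying the $R$-algebra homomorphism $\phi$ then yields $\phi(r\,1_{B})=\phi(1_{B})\phi(f)=\phi(1_{B})\cdot 0=0$, with $r\in R\setminus\{0\}$ and $B\in\mathcal{B}$, $B\subseteq\mathcal{G}^{(0)}$, exactly as required.

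The argument is short once \cite[Theorem 3.2]{CE-M15} is granted; the only point needing care is the convolution identity $1_{B}\star f=r\,1_{B}$, which relies on $B$ lying inside $\mathcal{G}^{(0)}$ so that composition with $B$ acts as a restriction rather than a genuine translation. I would remark that this same computation shows that on $A_{R}(\mathcal{G}^{(0)})$ convolution coincides with pointwise multiplication, and it is precisely this commutative, ``diagonal'' structure that lets us pass from the abstract nonzero element $f$ of $\ker\phi\cap A_{R}(\mathcal{G}^{(0)})$ to a concrete $r\,1_{B}$ for a basic $B\subseteq\mathcal{G}^{(0)}$.
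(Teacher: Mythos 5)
Your proof is correct and follows essentially the same route as the paper: invoke \cite[Theorem 3.2]{CE-M15}, shrink to a basic compact open $B\subseteq\mathcal{G}^{(0)}$, and kill $r1_{B}$ by multiplying the kernel element by $1_{B}$ and applying $\phi$. The only difference is in how much of the citation you use: \cite[Theorem 3.2]{CE-M15} already produces $r\in R\setminus\{0\}$ and a nonempty compact open $K\subseteq\mathcal{G}^{(0)}$ with $\phi\left(r1_{K}\right)=0$, so the paper just takes basic $B\subseteq K$ and computes $\phi\left(r1_{B}\right)=\phi\left(r1_{K}\right)\phi\left(1_{B}\right)=\phi\left(r1_{K\cap B}\right)=0$, whereas you quote only the weaker consequence that $\ker\phi$ meets the diagonal $A_{R}\left(\mathcal{G}^{(0)}\right)$ nontrivially and then compensate with the (correct) level-set step $V=f^{-1}\left(\{r\}\right)$ and the convolution identity $1_{B}\star f=r1_{B}$ --- slightly more work than necessary given the actual statement of the cited theorem, but it makes your argument robust even under that weaker reading.
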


\begin{proof}
Since $\ker \left( \phi \right) \neq 0$, then by \cite[Theorem 3.2]{CE-M15},
there exist $r\in \left. R\right\backslash \left\{ 0\right\} $ and a
nonempty compact open subset $K\subseteq \mathcal{G}^{\left( 0\right) }$
such that $\phi \left( r1_{K}\right) =0$. Since $K$ is open, then there is $%
B\in \mathcal{B}$ such that $B\subseteq K$. Hence, $B\subseteq \mathcal{G}%
^{\left( 0\right) }$ and%
\begin{equation*}
0=\phi \left( r1_{K}\right) \phi \left( 1_{B}\right) =\phi \left(
r1_{KB}\right) =\phi \left( r1_{K\cap B}\right) =\phi \left( r1_{B}\right)
\text{.}
\end{equation*}
\end{proof}

\begin{proof}[Proof of Theorem \protect\ref{the-CK-uniqueness-theorem}]
First note that $\mathcal{G}_{\Lambda }$ is a Hausdorff and ample groupoid
that is effective by Proposition~\ref{aperiodic-iff-effective}. Thus it
satisfies the hypothesis of Theorem \ref%
{the-CK-uniqueness-theorem-for-Steinberg-algebras}. Now recall the
isomorphism $\pi _{T}:{\normalsize \operatorname{KP}}_{R}\left( \Lambda \right)
\rightarrow A_{R}\left( \mathcal{G}_{\Lambda }\right) $ as in Proposition %
\ref{KP-is-isomorphic-to-Steinberg-algebras}. Then $\pi _{T}\left(
s_{\lambda }\right) =1_{Z_{\Lambda }\left( \lambda \ast _{s}s\left( \lambda
\right) \right) }$ and $\pi _{T}\left( s_{\mu ^{\ast }}\right)
=1_{Z_{\Lambda }\left( s\left( \mu \right) \ast _{s}\mu \right) }$ for $%
\lambda ,u\in \Lambda $. Define $\phi :=\pi \circ \pi _{T}^{-1}$. To show
the injectivity of $\pi $, it suffices to show that $\phi $ is injective.
Suppose for contradiction that $\phi $ is not injective. By Theorem \ref%
{the-CK-uniqueness-theorem-for-Steinberg-algebras}, there exist $r\in \left.
R\right\backslash \left\{ 0\right\} $ and $Z_{\Lambda }\left( \left. \lambda
\right\backslash G\right) $ such that $\phi \left( r1_{Z_{\Lambda }\left(
\left. \lambda \right\backslash G\right) }\right) =0$. Since $1_{Z_{\Lambda
}\left( \left. \lambda \right\backslash G\right) }$ can be identified as $%
1_{Z_{\Lambda }\left( \left. \lambda \ast _{s}\lambda \right\backslash
G\right) }$ (Remark \ref{remark-of-groupouid-Glambda}.(i)), then by
following the argument of \eqref{equ-1_Z(lambda*mu-G)}, we get%
\begin{equation*}
\phi \left( r1_{Z_{\Lambda }\left( \left. \lambda \right\backslash G\right)
}\right) =\pi \big(rs_{\lambda }\big(\prod_{\nu \in G}\left( s_{s\left(
\lambda \right) }-s_{\nu }s_{\nu ^{\ast }}\right) \big)s_{\lambda ^{\ast }}%
\big)
\end{equation*}%
and then%
\begin{equation}
\pi \big(rs_{\lambda }\big(\prod_{\nu \in G}\left( s_{s\left( \lambda
\right) }-s_{\nu }s_{\nu ^{\ast }}\right) \big)s_{\lambda ^{\ast }}\big)=0%
\text{.}  \label{equ-equal-0-the-CK-uniqueness-thm}
\end{equation}

On the other hand, since $\pi \left( rs_{v}\right) \neq 0$ for all $r\in
\left. R\right\backslash \left\{ 0\right\} $ and $v\in \Lambda ^{0}$, and $G$
is finite non-exhaustive, then by Proposition \ref{properties-of-KP}.(d),
\begin{equation*}
\pi \big(rs_{\lambda }\big(\prod_{\nu \in G}\left( s_{s\left( \lambda
\right) }-s_{\nu }s_{\nu ^{\ast }}\right) \big)s_{\lambda ^{\ast }}\big)\neq
0\text{,}
\end{equation*}%
which contradicts \eqref{equ-equal-0-the-CK-uniqueness-thm}. The conclusion
follows.
\end{proof}

One application of Theorem \ref{the-CK-uniqueness-theorem} is:

\begin{corollary}
\label{injectivity-of-the-boundary-path-representation}Let $\Lambda $ be
finitely aligned $k$-graph and $R$ be a commutative ring with $1$. Then $%
\Lambda $ is aperiodic if and only if the boundary-path representation $\pi
_{S}:{\normalsize \operatorname{KP}}_{R}\left( \Lambda \right) \rightarrow \operatorname{End}%
\left( \mathbb{F}_{R}\left( \partial \Lambda \right) \right) $ is injective.
\end{corollary}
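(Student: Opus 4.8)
The plan is to prove the two implications separately, reading the forward direction straight off the Cuntz--Krieger uniqueness theorem and handling the converse through the groupoid picture by contraposition.

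For the forward implication, suppose $\Lambda $ is aperiodic. By construction the boundary-path representation is the homomorphism $\pi _{S}$ determined by $\pi _{S}(s_{\lambda })=S_{\lambda }$ and $\pi _{S}(s_{\mu ^{\ast }})=S_{\mu ^{\ast }}$ for the family of Proposition~\ref{the-boundary-path-representation}, and that proposition gives $\pi _{S}(rs_{v})=rS_{v}\neq 0$ for all $r\in R\setminus \{0\}$ and $v\in \Lambda ^{0}$. Thus $\pi _{S}$ satisfies exactly the hypothesis of Theorem~\ref{the-CK-uniqueness-theorem}, so $\pi _{S}$ is injective. This direction is a one-line application.

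For the converse I would argue the contrapositive: assuming $\Lambda $ is not aperiodic, I produce a nonzero element of $\ker \pi _{S}$. By Proposition~\ref{aperiodic-iff-effective} the groupoid $\mathcal{G}_{\Lambda }$ is then not effective, so $\operatorname{int}(\operatorname{Iso}(\mathcal{G}_{\Lambda }))$ strictly contains $\mathcal{G}_{\Lambda }^{(0)}$. Since the sets $Z_{\Lambda }(\left. \lambda \ast _{s}\mu \right\backslash G)$ form a basis, I may choose one, $B=Z_{\Lambda }(\left. \lambda \ast _{s}\mu \right\backslash G)$, with $B\subseteq \operatorname{Iso}(\mathcal{G}_{\Lambda })$ yet containing a point $(x_{0},m_{0},x_{0})$ with $m_{0}=d(\lambda )-d(\mu )\neq 0$; in particular $\lambda \neq \mu $, and $B\neq \emptyset $ forces $G$ to be non-exhaustive by Remark~\ref{remark-of-groupouid-Glambda}.(ii). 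The inclusion $B\subseteq \operatorname{Iso}(\mathcal{G}_{\Lambda })$ says exactly that $\lambda z=\mu z$ for every $z$ with $\lambda z\in Z_{\Lambda }(\left. \lambda \right\backslash G)$, and $r(B)=Z_{\Lambda }(\left. \lambda \right\backslash G)\subseteq \mathcal{G}_{\Lambda }^{(0)}$. Using \eqref{equ-1_Z(lambda*mu-G)} and the identification of Remark~\ref{remark-of-groupouid-Glambda}.(i) to pull $1_{B}$ and $1_{r(B)}$ back through the isomorphism $\pi _{T}$ of Proposition~\ref{KP-is-isomorphic-to-Steinberg-algebras}, I set $a:=\pi _{T}^{-1}(1_{B}-1_{r(B)})$, which is the explicit element
\begin{equation*}
a=s_{\lambda }\Big(\prod_{\nu \in G}\big(s_{s(\lambda )}-s_{\nu }s_{\nu ^{\ast }}\big)\Big)s_{\mu ^{\ast }}-s_{\lambda }\Big(\prod_{\nu \in G}\big(s_{s(\lambda )}-s_{\nu }s_{\nu ^{\ast }}\big)\Big)s_{\lambda ^{\ast }}.
\end{equation*}
It is nonzero because $\pi _{T}$ is an isomorphism and $1_{B}$, $1_{r(B)}$ disagree at the non-unit $(x_{0},m_{0},x_{0})$.

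It remains to check $\pi _{S}(a)=0$, which is where the genuine work lies. The cleanest route is to note that $\pi _{S}$ factors as $\pi _{S}=\rho \circ \pi _{T}$, where $\rho \colon A_{R}(\mathcal{G}_{\Lambda })\rightarrow \operatorname{End}(\mathbb{F}_{R}(\partial \Lambda ))$ is the natural representation $\rho (f)(x)=\sum_{s(b)=x}f(b)\,r(b)$ (a finite sum since $f$ has compact support and $\mathcal{G}_{\Lambda }$ is \'etale); this factorisation is verified on the generators $s_{\lambda },s_{\mu ^{\ast }}$. Granting it, the relation $B\subseteq \operatorname{Iso}(\mathcal{G}_{\Lambda })$ makes $1_{B}$ and $1_{r(B)}$ act identically under $\rho $ -- each sends the basis vector $x$ to $x$ when $x\in r(B)=s(B)$ and to $0$ otherwise -- so $\rho (1_{B})=\rho (1_{r(B)})$ and $\pi _{S}(a)=\rho (1_{B}-1_{r(B)})=0$. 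Alternatively one can verify $\pi _{S}(a)=0$ directly, evaluating on each basis boundary path $w$ and splitting into the cases where $w$ begins with $\lambda $, with $\mu $, with both, or with neither; the identity $\lambda z=\mu z$ for $G$-avoiding $z$ together with left-cancellation from the factorisation property collapses every case to $0$. The main obstacle is precisely this last step: establishing that the boundary-path representation cannot separate $1_{B}$ from $1_{r(B)}$, which in the conceptual approach reduces to checking the multiplicativity of $\rho $ and the factorisation $\pi _{S}=\rho \circ \pi _{T}$, after which the result follows from the already-proved propositions.
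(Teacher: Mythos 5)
Your proof is correct, and the forward direction is exactly the paper's (apply Theorem \ref{the-CK-uniqueness-theorem} using the nonvanishing $\pi_S(rs_v)=rS_v\neq 0$ from Proposition \ref{the-boundary-path-representation}). For the converse, however, you take a genuinely different route. The paper follows \cite[Lemma 5.9]{ACaHR13}: it invokes Shotwell's results to get local periodicity $n\neq m$ at some vertex, takes $\mu,\alpha,\nu$ as in Lemma \ref{lemma-has-local-periodicity}, and exhibits the explicit element $a=s_{\mu\alpha}s_{(\mu\alpha)^{\ast}}-s_{\nu\alpha}s_{(\mu\alpha)^{\ast}}$; nonvanishing of $a$ is proved by a grading argument (the two terms lie in graded components $0$ and $n-m\neq 0$), and $\pi_S(a)=0$ is checked by evaluating directly on boundary paths. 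You instead stay inside the groupoid picture: non-aperiodicity gives non-effectiveness via Proposition \ref{aperiodic-iff-effective}, you choose a basic compact open bisection $B=Z_{\Lambda}(\lambda\ast_{s}\mu\backslash G)\subseteq\operatorname{Iso}(\mathcal{G}_{\Lambda})$ containing a non-unit, and you kill $\pi_T^{-1}(1_B-1_{r(B)})$ by factoring $\pi_S=\rho\circ\pi_T$ through the natural representation $\rho$ of $A_R(\mathcal{G}_{\Lambda})$ on $\mathbb{F}_R(\partial\Lambda)$. I checked the points you leave as verifications and they do go through: $\rho$ is well defined (source fibres meet compact supports in finite sets since $\mathcal{G}_{\Lambda}$ is \'etale and Hausdorff), multiplicativity of $\rho$ follows from the substitution $a=bc$ in the convolution formula, $\rho\circ\pi_T$ agrees with $\pi_S$ on the generators $s_{\lambda},s_{\mu^{\ast}}$, and for a bisection $B$ inside the isotropy one has $s(B)=r(B)$ so that $\rho(1_B)=\rho(1_{r(B)})$. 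What each approach buys: the paper's argument is self-contained at the level of the Kumjian--Pask relations and produces a kernel element explicitly tied to the periodicity data, at the price of importing the local-periodicity structure theory of \cite{Sh12}; your argument avoids local periodicity entirely, reusing only results already proved in the paper, and the factorisation $\pi_S=\rho\circ\pi_T$ is of independent interest (it identifies the boundary-path representation as the canonical Steinberg-algebra representation on the unit-space module, explaining conceptually why $\pi_S$ cannot separate a bisection in the isotropy from its range), at the price of the routine but necessary verification that $\rho$ is a homomorphism.
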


To show Corollary \ref{injectivity-of-the-boundary-path-representation}, we
establish some results and notation.

Following \cite[Definition 2.3]{Sh12}, for a finitely aligned $k$-graph $%
\Lambda $, we say $\Lambda $ has \emph{no local periodicity}\ if for every $%
v\in \Lambda ^{0}$ and every $n\neq m\in \mathbb{N}^{k}$, there exists $x\in
v\partial \Lambda $ such that either $d\left( x\right) \ngeq n\vee m$ or $%
\sigma ^{n}x\neq \sigma ^{m}x$. If no local aperiodicity fails at $v\in
\Lambda ^{0}$, then there are $n\neq m\in \mathbb{N}^{k}$ such that $\sigma
^{n}x=\sigma ^{m}x$ for all $x\in v\partial \Lambda $. In this case, we say $%
\Lambda $ has \emph{local periodicity}\ $n,m$ \emph{at }$v\in \Lambda ^{0}$.

\begin{lemma}[{\protect\cite[Lemma 2.9]{Sh12}}]
\label{lemma-has-local-periodicity}Let $\Lambda $ be a finitely aligned $k$%
-graph which has local periodicity $n,m$ at $v\in \Lambda ^{0}$. Then $%
d\left( x\right) \geq n\vee m$ and $\sigma ^{n}x=\sigma ^{m}x$ for every $%
x\in v\partial \Lambda $. Fix $x\in v\partial \Lambda $ and set $\mu
=x\left( 0,m\right) $, $\alpha =x\left( m,m\vee n\right) $, and $\nu =\left(
\mu \alpha \right) \left( 0,n\right) $. Then $\mu \alpha y=\nu \alpha y$ for
every $y\in s\left( \alpha \right) \partial \Lambda $.
\end{lemma}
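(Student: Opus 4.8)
The first assertion is essentially definitional: $\Lambda$ has local periodicity $n,m$ at $v$ exactly when the defining alternative for no local periodicity fails for this $v$ and this pair, i.e.\ for every $x\in v\partial\Lambda$ we have both $d(x)\geq n\vee m$ (which is needed in any case for $\sigma^{n}x$ and $\sigma^{m}x$ to be defined) and $\sigma^{n}x=\sigma^{m}x$. From this I would immediately extract $x(n)=x(m)$ by taking ranges: $x(n)=r(\sigma^{n}x)=r(\sigma^{m}x)=x(m)$. This equality is what later makes the composite $\nu\alpha$ legitimate.

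Next I would unwind the notation using the factorisation property. Since $\mu\alpha=x(0,m)\,x(m,m\vee n)=x(0,m\vee n)$, it follows that $\nu=(\mu\alpha)(0,n)=x(0,n)$ and $s(\alpha)=x(m\vee n)$, while $s(\nu)=x(n)=x(m)=r(\alpha)$ shows that $\nu\alpha$ is composable. Setting $\beta:=x(n,m\vee n)$, the factorisation property likewise gives $\nu\beta=x(0,n)\,x(n,m\vee n)=x(0,m\vee n)=\mu\alpha$, an identity I reserve for the last step.

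The crux is to reinsert a freshly built boundary path into the hypothesis. Fixing $y\in s(\alpha)\partial\Lambda=x(m\vee n)\partial\Lambda$, I set $w:=\mu\alpha y=x(0,m\vee n)\,y$; since $\mu\alpha\in\Lambda\,r(y)$ and $y\in\partial\Lambda$, this $w$ is again a boundary path with $r(w)=v$, so $w\in v\partial\Lambda$ and $d(w)\geq m\vee n$. Applying the first assertion to $w$ yields $\sigma^{m}w=\sigma^{n}w$. Computing the two shifts from the factorisations $w=\mu(\alpha y)=\nu(\beta y)$---stripping the degree-$m$ prefix $\mu$, respectively the degree-$n$ prefix $\nu$---gives $\sigma^{m}w=\alpha y$ and $\sigma^{n}w=\beta y$, whence $\alpha y=\beta y$.

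I would then conclude by associativity of composition:
\begin{equation*}
\nu\alpha y=\nu(\alpha y)=\nu(\beta y)=(\nu\beta)y=(\mu\alpha)y=\mu\alpha y,
\end{equation*}
where the middle equality uses $\alpha y=\beta y$ and the fourth uses $\nu\beta=\mu\alpha$. I expect the only conceptual obstacle to be seeing past the apparent degree mismatch $d(\mu\alpha)-d(\nu\alpha)=m-n\neq 0$: this is harmless because $w\in v\partial\Lambda$ forces $d(w)_{i}=\infty$ whenever $m_{i}\neq n_{i}$, so both sides carry equal, partly infinite, degree---though in the clean argument above this is automatic and never has to be invoked explicitly. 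The remaining bookkeeping (well-definedness of $\sigma^{m}w$ and $\sigma^{n}w$, the identity $\mu\alpha=\nu\beta=x(0,m\vee n)$, and composability of $\nu\alpha$) is routine once $x(n)=x(m)$ is in hand.
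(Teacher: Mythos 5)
Your proof is correct. Note that the paper contains no proof of this lemma to compare against: it is imported verbatim from \cite[Lemma 2.9]{Sh12}. Your argument is essentially the standard one (and the one in Shotwell's paper): apply the local-periodicity hypothesis to the newly assembled boundary path $w=\mu \alpha y\in v\partial \Lambda $, and strip the degree-$m$ and degree-$n$ prefixes $\mu =w\left( 0,m\right) $ and $\nu =w\left( 0,n\right) $ to obtain $\alpha y=\sigma ^{m}w=\sigma ^{n}w=\beta y$, whence $\nu \alpha y=\nu \beta y=\left( \mu \alpha \right) y$. Your preliminary bookkeeping ($x\left( n\right) =x\left( m\right) $ from equating ranges of the shifts, $\nu =x\left( 0,n\right) $, $\nu \beta =\mu \alpha =x\left( 0,m\vee n\right) $, composability of $\nu \alpha $, and the remark that the apparent degree mismatch is absorbed by the coordinates in which $d\left( w\right) $ is infinite) is all sound, and the citation of \cite[Lemma 5.13]{FMY05} needed to know that $\mu \alpha y$ and $\sigma ^{m}w$, $\sigma ^{n}w$ are again boundary paths is available in the paper's background section.
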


\begin{proof}[Proof of Corollary \protect\ref%
{injectivity-of-the-boundary-path-representation}]
$\left( \Rightarrow \right) $ Suppose that $\Lambda $ is aperiodic. By
Proposition \ref{the-boundary-path-representation}, we have $\pi _{S}\left(
rs_{v}\right) \neq 0$ for all $r\in \left. R\right\backslash \left\{
0\right\} $ and $v\in \Lambda ^{0}$. Since $\Lambda $ is aperiodic, then by
Theorem \ref{the-CK-uniqueness-theorem}, $\pi _{S}$ is injective.

$\left( \Leftarrow \right) $ Suppose that $\Lambda $ is not aperiodic. We
are following the argument of \cite[Lemma 5.9]{ACaHR13}. Since $\Lambda $ is
not aperiodic, by \cite[Proposition 2.11]{Sh12}, there exist $v\in \Lambda
^{0}$ and $n\neq m\in \mathbb{N}^{k}$ such that $\Lambda $ has local
periodicity $n,m$ at $v\in \Lambda ^{0}$. Let $\mu ,\nu ,\alpha $ be as in
Lemma \ref{lemma-has-local-periodicity} and define $a:=s_{\mu \alpha
}s_{\left( \mu \alpha \right) ^{\ast }}-s_{\nu \alpha }s_{\left( \mu \alpha
\right) ^{\ast }}$. We claim that $a\in \left. \ker \left( \pi _{S}\right)
\right\backslash \left\{ 0\right\} $.

First we show that $a\neq 0$. Suppose for contradiction that $a=0$. Then $%
s_{\mu \alpha }s_{\left( \mu \alpha \right) ^{\ast }}=s_{\nu \alpha
}s_{\left( \mu \alpha \right) ^{\ast }}$. Note that $d\left( s_{\mu \alpha
}s_{\left( \mu \alpha \right) ^{\ast }}\right) =d\left( \mu \alpha \right)
-d\left( \mu \alpha \right) =0$ and%
\begin{equation*}
d\left( s_{\nu \alpha }s_{\left( \mu \alpha \right) ^{\ast }}\right)
=d\left( \nu \alpha \right) -d\left( \mu \alpha \right) =d\left( \nu \right)
+d\left( \alpha \right) -d\left( \mu \right) -d\left( \alpha \right)
=n-m\neq 0\text{.}
\end{equation*}%
Hence $s_{\mu \alpha }s_{\left( \mu \alpha \right) ^{\ast }}=s_{\nu \alpha
}s_{\left( \mu \alpha \right) ^{\ast }}=0$. Thus, $0=s_{\left( \mu \alpha
\right) ^{\ast }}\left( s_{\mu \alpha }s_{\left( \mu \alpha \right) ^{\ast
}}\right) s_{\mu \alpha }=s_{s\left( \mu \alpha \right) }^{2}=s_{s\left( \mu
\alpha \right) }$, which contradicts Theorem \ref{universal-KP-family}.(b).
Hence $a\neq 0$.

Now we show that $a\in \ker \left( \pi _{S}\right) $. Take $y\in \partial
\Lambda $, and it suffices to show $\pi _{S}\left( a\right) \left( y\right)
=0$. Recall that $\pi _{S}\left( s_{\lambda }\right) =S_{\lambda }$ and $\pi
_{S}\left( s_{\mu ^{\ast }}\right) =S_{\mu ^{\ast }}$ where
\begin{equation*}
S_{\lambda }\left( y\right) =%
\begin{cases}
\lambda y & \text{if }s\left( \lambda \right) =r\left( y\right) \text{;} \\
0 & \text{otherwise,}%
\end{cases}%
\text{ and }S_{\mu ^{\ast }}\left( y\right) =%
\begin{cases}
\sigma ^{d\left( \mu \right) }y & \text{if }y\left( 0,d\left( \mu \right)
\right) =\mu \text{;} \\
0 & \text{otherwise.}%
\end{cases}%
\end{equation*}%
First suppose that $y\left( 0,d\left( \mu \alpha \right) \right) \neq \mu
\alpha $. Then $S_{\left( \mu \alpha \right) ^{\ast }}\left( y\right) =0$
and $\pi _{S}\left( a\right) \left( y\right) =S_{\mu \alpha }S_{\left( \mu
\alpha \right) ^{\ast }}\left( y\right) -S_{\nu \alpha }S_{\left( \mu \alpha
\right) ^{\ast }}\left( y\right) =0$. Next suppose that $y\left( 0,d\left(
\mu \alpha \right) \right) =\mu \alpha $. Then
\begin{equation*}
\pi _{S}\left( a\right) \left( y\right) =\left( S_{\mu \alpha }-S_{\nu
\alpha }\right) \left( \sigma ^{d\left( \mu \alpha \right) }y\right) \text{.}
\end{equation*}%
Since $y\in \partial \Lambda $, then $\sigma ^{d\left( \mu \alpha \right)
}y\in s\left( \alpha \right) \partial \Lambda $ and by Lemma \ref%
{lemma-has-local-periodicity}, $\mu \alpha \left( \sigma ^{d\left( \mu
\alpha \right) }y\right) =\nu \alpha \left( \sigma ^{d\left( \mu \alpha
\right) }y\right) $ and hence $\pi _{S}\left( a\right) \left( y\right) =0$.
Thus, $a\in \left. \ker \left( \pi _{S}\right) \right\backslash \left\{
0\right\} $, as claimed, and $\pi _{S}$ is not injective.
\end{proof}

\section{Basic simplicity and simplicity}

\label{Section-Basic-Simpllicity}As in \cite{T11}, we say an ideal $I$ in $%
{\normalsize \operatorname{KP}}_{R}\left( \Lambda \right) $ is \emph{basic} if
whenever $r\in \left. R\right\backslash \left\{ 0\right\} $ and $v\in
\Lambda ^{0}$, we have $rs_{v}\in I$ implies $s_{v}\in I$. We also say that $%
{\normalsize \operatorname{KP}}_{R}\left( \Lambda \right) $ is \emph{basically simple%
}\ if its only basic ideals are $\left\{ 0\right\} $ and ${\normalsize \operatorname{%
KP}}_{R}\left( \Lambda \right) $.

In this section, we investigate necessary and sufficient conditions for $%
{\normalsize \operatorname{KP}}_{R}\left( \Lambda \right) $ to be basically simple
(Theorem \ref{basic-simplicity}) and to be simple (Theorem \ref{simplicity}%
). We show that both results can be viewed as a consequences of basic
simplicity and simplicity characterisations of Steinberg algebras.
Therefore, we state necessary and sufficient conditions for the Steinberg
algebra $A_{R}\left( \mathcal{G}\right) $ to be basically simple and to be
simple in the following two theorems.

\begin{theorem}[{\protect\cite[Theorem 4.1]{CE-M15}}]
\label{basic-simplicity-for-Steinberg-algebras}Let $\mathcal{G}$ be an
Hausdorff, ample groupoid and $R$ be a commutative ring with $1$. Then $%
A_{R}\left( \mathcal{G}\right) $ is basically simple if and only if $%
\mathcal{G}$ is effective and minimal.
\end{theorem}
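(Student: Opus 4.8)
The plan is to prove both implications directly, using the standard notion of a \emph{basic} ideal for Steinberg algebras: an ideal $I$ of $A_{R}\left( \mathcal{G}\right) $ is basic if $r1_{K}\in I$, with $r\in \left. R\right\backslash \left\{ 0\right\} $ and $K\subseteq \mathcal{G}^{\left( 0\right) }$ compact open, forces $1_{K}\in I$. The key observation is that basic ideals are controlled by the unit space, so the work is to (i) build up all of $A_{R}\left( \mathcal{G}\right) $ from a single nonzero basic ideal under the effective-and-minimal hypothesis, and (ii) manufacture a proper nonzero basic ideal whenever either hypothesis fails.

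For the implication $\left( \Leftarrow \right) $, I would begin with a nonzero basic ideal $I$ and let $\phi \colon A_{R}\left( \mathcal{G}\right) \rightarrow A_{R}\left( \mathcal{G}\right) /I$ be the quotient map, so $\ker \phi =I\neq 0$. Applying Theorem~\ref{the-CK-uniqueness-theorem-for-Steinberg-algebras} with $\mathcal{B}$ a basis of compact open bisections produces $r\in \left. R\right\backslash \left\{ 0\right\} $ and a nonempty compact open $K\subseteq \mathcal{G}^{\left( 0\right) }$ with $r1_{K}\in I$, and basicness upgrades this to $1_{K}\in I$. I would then set
\[
U:=\bigcup \left\{ W\subseteq \mathcal{G}^{\left( 0\right) }:W\text{ is compact open and }1_{W}\in I\right\} ,
\]
which is open and nonempty (it contains $K$). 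To see $U$ is invariant, take $g$ with $s\left( g\right) \in U$, pick a compact open $W\subseteq \mathcal{G}^{\left( 0\right) }$ with $s\left( g\right) \in W$ and $1_{W}\in I$, and choose a compact open bisection $B\ni g$ (using that $\mathcal{G}$ is ample). Then $1_{B}\star 1_{W}\star 1_{B^{-1}}=1_{BWB^{-1}}\in I$, where $BWB^{-1}\subseteq \mathcal{G}^{\left( 0\right) }$ is a compact open neighbourhood of $r\left( g\right) $, so $r\left( g\right) \in U$. Minimality then gives $U=\mathcal{G}^{\left( 0\right) }$. A compactness argument with inclusion--exclusion promotes this to $1_{V}\in I$ for every compact open $V\subseteq \mathcal{G}^{\left( 0\right) }$, and finally $1_{B}=1_{B}\star 1_{s\left( B\right) }\in I$ for every compact open bisection $B$; since such characteristic functions span $A_{R}\left( \mathcal{G}\right) $, we conclude $I=A_{R}\left( \mathcal{G}\right) $, i.e. $A_{R}\left( \mathcal{G}\right) $ is basically simple.

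For the implication $\left( \Rightarrow \right) $, I would prove the contrapositive in two cases. If $\mathcal{G}$ is not minimal, choose a nontrivial open invariant $U\subsetneq \mathcal{G}^{\left( 0\right) }$ and take $I_{U}$ to be the ideal of functions supported on $\left\{ g\in \mathcal{G}:s\left( g\right) \in U\right\} $: invariance makes this an ideal, it is nonzero because $U$ contains a compact open set, it is proper because $U\neq \mathcal{G}^{\left( 0\right) }$, and it is basic, contradicting basic simplicity. If $\mathcal{G}$ is not effective, then the interior of $\operatorname{Iso}\left( \mathcal{G}\right) $ strictly contains $\mathcal{G}^{\left( 0\right) }$, so (using Hausdorffness and ampleness) there is a compact open bisection $B\subseteq \operatorname{Iso}\left( \mathcal{G}\right) $ with $B\cap \mathcal{G}^{\left( 0\right) }=\emptyset $ and $s\left( B\right) =r\left( B\right) $; the nonzero element $1_{s\left( B\right) }-1_{B}$ generates a proper basic ideal, again contradicting basic simplicity.

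I expect the $\left( \Rightarrow \right) $ direction to be the main obstacle, and in particular the verification that the candidate ideals are genuinely \emph{basic} and proper. The non-effective case is the most delicate, since one must confirm that the ideal generated by $1_{s\left( B\right) }-1_{B}$ is proper and basic (equivalently, that it contains no unit-space function $1_{K}$ outright); this is where I would expect to need a careful description of the ideal, rather than a routine computation. By contrast, the $\left( \Leftarrow \right) $ direction is comparatively routine once the localisation to $\mathcal{G}^{\left( 0\right) }$ is in hand, and that localisation is exactly what Theorem~\ref{the-CK-uniqueness-theorem-for-Steinberg-algebras} provides.
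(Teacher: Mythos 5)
First, a point of comparison: the paper never proves this statement at all; it is imported verbatim from \cite[Theorem 4.1]{CE-M15} and used as a black box, so your proposal has to be judged on its own merits rather than against an internal argument. Most of it holds up. Your $(\Leftarrow)$ direction is complete and correct: Theorem \ref{the-CK-uniqueness-theorem-for-Steinberg-algebras} is legitimately available (since $\mathcal{G}$ is assumed effective there), it puts some $r1_{K}$ into the basic ideal $I$, basicness upgrades this to $1_{K}\in I$, your set $U$ is open, nonempty and invariant (the computation $1_{B}\star 1_{W}\star 1_{B^{-1}}=1_{r(B\cap s^{-1}(W))}$ is valid precisely because $B$ is a bisection), minimality forces $U=\mathcal{G}^{(0)}$, and compactness, inclusion--exclusion, and the spanning of $A_{R}(\mathcal{G})$ by indicators of compact open bisections finish. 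The non-minimal half of $(\Rightarrow)$ is likewise fine: invariance of $U$ makes $I_{U}=\{f\in A_{R}(\mathcal{G}):\operatorname{supp}(f)\subseteq s^{-1}(U)\}$ a two-sided ideal, it is nonzero and proper by ampleness, and it is basic because $r1_{K}\in I_{U}$ forces $K\subseteq U$, whence $1_{K}\in I_{U}$.

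The genuine gap is the non-effective case, exactly where you flagged doubt. You assert that $1_{s(B)}-1_{B}$ generates a proper basic ideal but prove neither property, and neither is routine: a priori the two-sided ideal generated by this element could contain some $r1_{K}$ (with $r\neq 0$ and $K$ nonempty compact open in $\mathcal{G}^{(0)}$) without containing $1_{K}$, or could even be all of $A_{R}(\mathcal{G})$. The missing idea is a family of representations annihilating the generator but no $r1_{K}$. For a unit $x$, let $H_{x}:=\operatorname{Iso}(\mathcal{G})^{\circ}\cap s^{-1}(x)\cap r^{-1}(x)$ and let $\pi_{x}$ act on the free $R$-module with basis the coset space $s^{-1}(x)/H_{x}$ by $\pi_{x}(f)[gH_{x}]=\sum_{\{k:\,s(k)=r(g)\}}f(k)[kgH_{x}]$; this is a well-defined $R$-algebra homomorphism (finiteness of the sum comes from compact support and discreteness of fibres). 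The key point is that $\operatorname{Iso}(\mathcal{G})^{\circ}$ is a \emph{normal} subgroupoid: if $k\in\operatorname{Iso}(\mathcal{G})^{\circ}$ and $r(g)=r(k)$, then conjugating an open neighbourhood $V\subseteq\operatorname{Iso}(\mathcal{G})$ of $k$ by an open bisection $W\ni g$ gives the open set $W^{-1}VW\subseteq\operatorname{Iso}(\mathcal{G})$ (a bisection meets each range fibre only once), so $g^{-1}kg\in\operatorname{Iso}(\mathcal{G})^{\circ}$. Consequently, for $r(g)\in s(B)$ the unique $k\in B$ with $s(k)=r(k)=r(g)$ satisfies $kgH_{x}=g(g^{-1}kg)H_{x}=gH_{x}$, so $\pi_{x}(1_{B})=\pi_{x}(1_{s(B)})$; on the other hand $\pi_{x}(r1_{K})[xH_{x}]=r[xH_{x}]\neq 0$ whenever $x\in K$ and $r\neq 0$. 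Hence the ideal generated by $1_{s(B)}-1_{B}$ lies in $\bigcap_{x}\ker\pi_{x}$, which contains no nonzero multiple of any $1_{K}$ with $K$ nonempty; that ideal is therefore nonzero, proper, and vacuously basic. Without this construction (or a direct appeal to \cite{CE-M15}), your case analysis does not close, and this case is the real content of the effectiveness direction.
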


\begin{theorem}[{\protect\cite[Corollary 4.6]{CE-M15}}]
\label{simplicity-for-Steinberg-algebras}Let $\mathcal{G}$ be an Hausdorff,
ample groupoid and $R$ be a commutative ring with $1$. Then $A_{R}\left(
\mathcal{G}\right) $ is simple if and only if $R$ is a field and $\mathcal{G}
$ is effective and minimal.
\end{theorem}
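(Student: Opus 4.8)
The plan is to obtain the simplicity criterion as a corollary of the basic-simplicity criterion in Theorem~\ref{basic-simplicity-for-Steinberg-algebras}, isolating the exact role played by $R$: the condition that $R$ be a field is precisely what guarantees that \emph{every} ideal of $A_R(\mathcal{G})$ is basic. Recall that an ideal $I$ of $A_R(\mathcal{G})$ is basic when, for $r \in R \setminus \{0\}$ and a compact open $U \subseteq \mathcal{G}^{(0)}$, the relation $r1_U \in I$ forces $1_U \in I$.

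For the implication $(\Leftarrow)$, suppose $R$ is a field and $\mathcal{G}$ is effective and minimal. I would first note that over a field every ideal $I$ is automatically basic: if $r1_U \in I$ with $r \neq 0$, then, since $I$ is an $R$-submodule and $r^{-1}$ exists in $R$, we get $1_U = r^{-1}(r1_U) \in I$. Theorem~\ref{basic-simplicity-for-Steinberg-algebras} gives that $A_R(\mathcal{G})$ is basically simple, so its only basic ideals are $\{0\}$ and $A_R(\mathcal{G})$; as all ideals are basic, these are its only ideals and $A_R(\mathcal{G})$ is simple.

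For the implication $(\Rightarrow)$, suppose $A_R(\mathcal{G})$ is simple. Since the basic ideals form a subclass of all ideals, $A_R(\mathcal{G})$ is in particular basically simple, and Theorem~\ref{basic-simplicity-for-Steinberg-algebras} yields that $\mathcal{G}$ is effective and minimal. To force $R$ to be a field, I would argue by contradiction: if $J$ is a proper nonzero ideal of $R$, set $I_J := \{f \in A_R(\mathcal{G}) : f(\gamma) \in J \text{ for all } \gamma \in \mathcal{G}\}$. This $I_J$ is a two-sided ideal because values multiply pointwise under convolution and $J$ absorbs products; picking $r \in J \setminus \{0\}$ and a nonempty compact open bisection $B$ shows $r1_B \in I_J$ is nonzero, while $1_B \notin I_J$ (its value $1$ lies outside $J$) shows $I_J \neq A_R(\mathcal{G})$. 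This contradicts simplicity, so $R$ is a field.

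The verifications I would still need to dispatch are routine: that $I_J$ is closed under convolution on both sides (immediate from $J$ being an ideal of $R$), and that a nonempty compact open bisection $B$ exists, which holds because simplicity forces $A_R(\mathcal{G}) \neq 0$, hence $\mathcal{G} \neq \emptyset$, and the ampleness of $\mathcal{G}$ then supplies such a $B$. The one conceptual point to get right---and the crux of the argument---is the clean separation of hypotheses: effectiveness and minimality are exactly responsible for basic simplicity via Theorem~\ref{basic-simplicity-for-Steinberg-algebras}, while $R$ being a field is exactly responsible for upgrading ``basic'' to ``arbitrary,'' so that basic simplicity and simplicity coincide.
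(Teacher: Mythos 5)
Your proposal is correct, but be aware that the paper itself offers no proof of this statement: Theorem~\ref{simplicity-for-Steinberg-algebras} is imported verbatim from \cite[Corollary 4.6]{CE-M15}, so the only ``proof'' in the text is the citation. What you have written is essentially the argument that \cite{CE-M15} gives for its Corollary 4.6: deduce simplicity from the basic-simplicity criterion (Theorem~\ref{basic-simplicity-for-Steinberg-algebras}) by observing that over a field every ideal is basic, and rule out non-fields by exhibiting, for a proper nonzero ideal $J$ of $R$, the ideal $I_J$ of $J$-valued functions. Both directions check out: $I_J$ is a two-sided ideal because each convolution sum $\sum_{r(a)=r(b)} f(b)g(b^{-1}a)$ lies in $J$ as soon as one factor is $J$-valued; it is nonzero because $r1_B\in I_J$ for $r\in J\setminus\{0\}$ and a nonempty compact open bisection $B$ (which exists since simplicity forces $A_R(\mathcal{G})\neq 0$, hence $\mathcal{G}\neq\emptyset$, and $\mathcal{G}$ is ample); and it is proper because $1\notin J$.

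The one step you should make explicit is the claim that an ideal $I$ of $A_R(\mathcal{G})$ is an $R$-submodule, which you invoke to write $1_U=r^{-1}(r1_U)\in I$. Since $A_R(\mathcal{G})$ is unital only when $\mathcal{G}^{(0)}$ is compact, a two-sided \emph{ring} ideal is not an $R$-submodule for free. The gap closes with a standard local-units argument: given $f\in I$ and $r\in R$, choose a compact open $K\subseteq\mathcal{G}^{(0)}$ containing $r(\operatorname{supp}(f))$ (possible because $r(\operatorname{supp}(f))$ is compact and $\mathcal{G}^{(0)}$ has a basis of compact open sets); then $1_K\star f=f$, so $rf=(r1_K)\star f\in I$. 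With that remark inserted, your separation of the hypotheses---effectiveness and minimality accounting for basic simplicity, the field condition accounting for the equivalence of ``basic'' with ``arbitrary''---is exactly the right way to see the corollary.
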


Now we are ready to prove our results in this section.

\begin{theorem}
\label{basic-simplicity}Let $\Lambda $ be a finitely aligned $k$-graph and
let $R$ be a commutative ring with $1$. Then ${\normalsize \operatorname{KP}}%
_{R}\left( \Lambda \right) $ is basically simple if and only if $\Lambda $
is aperiodic and cofinal.
\end{theorem}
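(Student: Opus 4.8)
The plan is to transport the question to the boundary-path groupoid $\mathcal{G}_{\Lambda}$ via the isomorphism $\pi_{T}\colon\operatorname{KP}_{R}(\Lambda)\to A_{R}(\mathcal{G}_{\Lambda})$ of Proposition \ref{KP-is-isomorphic-to-Steinberg-algebras}, and then to feed in the Steinberg-algebra characterisation of basic simplicity (Theorem \ref{basic-simplicity-for-Steinberg-algebras}) together with Propositions \ref{aperiodic-iff-effective} and \ref{cofinal-iff-minimal}. A word of caution drives the whole argument: a basic ideal of $A_{R}(\mathcal{G}_{\Lambda})$ is tested against $1_{K}$ for \emph{every} compact open $K\subseteq\mathcal{G}_{\Lambda}^{(0)}$, whereas a basic ideal of $\operatorname{KP}_{R}(\Lambda)$ is tested only against the vertex idempotents $s_{v}$. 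These notions do \emph{not} correspond under $\pi_{T}$ in general, so I would not attempt to prove the equivalence ``$I$ basic $\iff\pi_{T}(I)$ basic''; instead I would prove the two implications by different routes.

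For the forward implication I would argue contrapositively. If $\Lambda$ is not aperiodic or not cofinal, then by Propositions \ref{aperiodic-iff-effective} and \ref{cofinal-iff-minimal} the groupoid $\mathcal{G}_{\Lambda}$ is not effective or not minimal, so Theorem \ref{basic-simplicity-for-Steinberg-algebras} furnishes a basic ideal $H$ of $A_{R}(\mathcal{G}_{\Lambda})$ with $0\neq H\neq A_{R}(\mathcal{G}_{\Lambda})$. Here only the \emph{easy} half of the correspondence is needed: since $\pi_{T}(s_{v})=1_{Z_{\Lambda}(v)}$ with $Z_{\Lambda}(v)=v\partial\Lambda$ compact open in $\mathcal{G}_{\Lambda}^{(0)}$, whenever $rs_{v}\in\pi_{T}^{-1}(H)$ we get $r1_{Z_{\Lambda}(v)}\in H$, hence $1_{Z_{\Lambda}(v)}\in H$ because $H$ is basic, and so $s_{v}\in\pi_{T}^{-1}(H)$. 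Thus $\pi_{T}^{-1}(H)$ is a nontrivial basic ideal and $\operatorname{KP}_{R}(\Lambda)$ is not basically simple.

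For the reverse implication, suppose $\Lambda$ is aperiodic and cofinal, so $\mathcal{G}_{\Lambda}$ is effective and minimal, and let $I$ be a basic ideal with $I\neq0$. Set $H:=\pi_{T}(I)\neq0$; the point is that I cannot simply claim $H$ is basic, so I would exploit effectiveness directly. Applying the Cuntz-Krieger uniqueness theorem for Steinberg algebras (Theorem \ref{the-CK-uniqueness-theorem-for-Steinberg-algebras}) to the quotient map $A_{R}(\mathcal{G}_{\Lambda})\to A_{R}(\mathcal{G}_{\Lambda})/H$, whose kernel $H$ is nonzero, yields $r\in R\setminus\{0\}$ and a basis element $B\subseteq\mathcal{G}_{\Lambda}^{(0)}$, necessarily of the form $B=Z_{\Lambda}(\lambda\setminus G)$, with $r1_{B}\in H$. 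As $B\neq\emptyset$ the set $G$ is non-exhaustive, so there is $\gamma\in s(\lambda)\Lambda$ with $\Lambda^{\min}(\gamma,\nu)=\emptyset$ for all $\nu\in G$, giving $Z_{\Lambda}(\lambda\gamma)\subseteq B$. Multiplying by $1_{Z_{\Lambda}(\lambda\gamma)}=\pi_{T}(s_{\lambda\gamma}s_{(\lambda\gamma)^{\ast}})$ and then compressing by $s_{(\lambda\gamma)^{\ast}}(\,\cdot\,)s_{\lambda\gamma}$ (using $s_{(\lambda\gamma)^{\ast}}s_{\lambda\gamma}=s_{w}$ with $w=s(\lambda\gamma)$, together with the identity $\pi_{T}^{-1}(1_{Z_{\Lambda}(\lambda\setminus G)})=s_{\lambda}\big(\prod_{\nu\in G}(s_{s(\lambda)}-s_{\nu}s_{\nu^{\ast}})\big)s_{\lambda^{\ast}}$ from the proof of Proposition \ref{KP-is-isomorphic-to-Steinberg-algebras}) produces $rs_{w}\in I$. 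Basicness of $I$ now upgrades this to $s_{w}\in I$, equivalently $1_{Z_{\Lambda}(w)}\in H$.

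Finally I would invoke minimality to finish: an ideal of $A_{R}(\mathcal{G}_{\Lambda})$ containing the characteristic function $1_{Z_{\Lambda}(w)}$ of a nonempty compact open subset of $\mathcal{G}_{\Lambda}^{(0)}$ must be all of $A_{R}(\mathcal{G}_{\Lambda})$, since in a minimal groupoid every point of $\mathcal{G}_{\Lambda}^{(0)}$ lies in the orbit of a point of $Z_{\Lambda}(w)$, so by compactness each $1_{Z_{\Lambda}(u)}=\pi_{T}(s_{u})$ is obtained from $1_{Z_{\Lambda}(w)}$ by finitely many convolutions with characteristic functions of compact open bisections. Hence $s_{u}\in I$ for every $u\in\Lambda^{0}$, so $I=\operatorname{KP}_{R}(\Lambda)$. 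The main obstacle is exactly this reverse implication: because KP-basicness (tested only on vertices) is strictly weaker than Steinberg-basicness (tested on all compact opens of the unit space), the clean ideal-by-ideal transport fails, and one must instead manufacture a genuine vertex idempotent inside $I$ through the Cuntz-Krieger theorem (effectiveness) and then propagate it using minimality (cofinality).
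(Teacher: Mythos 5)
Your proposal is correct, and in the direction that carries the real content it takes a genuinely different route from the paper. The paper's proof is a very short transport argument: it asserts that basic simplicity passes back and forth across the isomorphism $\pi_{T}$ of Proposition \ref{KP-is-isomorphic-to-Steinberg-algebras} and then quotes Theorem \ref{basic-simplicity-for-Steinberg-algebras} together with Propositions \ref{aperiodic-iff-effective} and \ref{cofinal-iff-minimal}. Your forward implication is the paper's forward implication in contrapositive form; both use only the unproblematic half of the ideal correspondence (a Steinberg-basic ideal pulls back to a vertex-basic ideal, because $\pi_{T}(s_{v})=1_{Z_{\Lambda}(v)}$ and $Z_{\Lambda}(v)$ is compact open in $\mathcal{G}_{\Lambda}^{(0)}$). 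The divergence is in the reverse implication: the paper's transport tacitly needs the other half --- that $\pi_{T}(I)$ is basic in the compact-open sense whenever $I$ is basic in the vertex sense --- and, as you observe, this is not obvious, since the vertex condition says nothing a priori about $r1_{K}$ for a general compact open $K\subseteq\mathcal{G}_{\Lambda}^{(0)}$ such as $K=Z_{\Lambda}(\lambda\backslash G)$. (Your claim that the two notions genuinely fail to correspond is not itself established, but you are right not to lean on their equivalence.) Your replacement argument is sound: effectiveness enters through Theorem \ref{the-CK-uniqueness-theorem-for-Steinberg-algebras} applied to the quotient by $\pi_{T}(I)$, giving $r1_{B}\in\pi_{T}(I)$ with $B$ a nonempty basis set in the unit space, and such a set must indeed have the form $Z_{\Lambda}(\lambda\backslash G)$, since by Remark \ref{remark-of-groupouid-Glambda}(ii) basis sets are nonempty by convention and a nonempty $Z_{\Lambda}(\lambda\ast_{s}\mu\backslash G)$ contains units only if $\lambda=\mu$; shrinking to $Z_{\Lambda}(\lambda\gamma)\subseteq B$ via non-exhaustiveness and compressing gives $rs_{w}\in I$, the vertex-basic hypothesis --- invoked exactly where it is stated to apply --- gives $s_{w}\in I$, and minimality propagates this to every vertex, so $I=\operatorname{KP}_{R}(\Lambda)$. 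In effect you re-prove the needed portion of \cite[Theorem 4.1]{CE-M15} in a form adapted to the weaker, vertex-tested notion of basicness: the paper's proof is shorter but leaves the compatibility of the two notions of ``basic'' unaddressed, while yours is longer but closes that gap. The one step you should write out fully is the last: for $x\in Z_{\Lambda}(u)$, minimality gives $g\in\mathcal{G}_{\Lambda}$ with $r(g)=x$ and $s(g)\in Z_{\Lambda}(w)$; choosing a compact open bisection $U\ni g$ with $s(U)\subseteq Z_{\Lambda}(w)$ yields $1_{U}\star 1_{Z_{\Lambda}(w)}\star 1_{U^{-1}}=1_{r(U)}\in\pi_{T}(I)$, and finitely many such sets $r(U)$ cover the compact set $Z_{\Lambda}(u)$, whence $1_{Z_{\Lambda}(u)}\in\pi_{T}(I)$ by inclusion--exclusion and the ideal property.
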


\begin{proof}
$\left( \Rightarrow \right) $ First suppose that ${\normalsize \operatorname{KP}}%
_{R}\left( \Lambda \right) $ is basically simple. By Proposition \ref%
{KP-is-isomorphic-to-Steinberg-algebras}, $A_{R}\left( \mathcal{G}_{\Lambda
}\right) $ is also basically simple and then by Theorem \ref%
{basic-simplicity-for-Steinberg-algebras}, $\mathcal{G}_{\Lambda }$ is
effective and minimal. On the other hand, $\mathcal{G}_{\Lambda }$ is
effective implies that $\Lambda $ is aperiodic (Proposition \ref%
{aperiodic-iff-effective}), and $\mathcal{G}_{\Lambda }$ is minimal implies
that $\Lambda $ is cofinal (Proposition \ref{cofinal-iff-minimal}). The
conclusion follows.

$\left( \Leftarrow \right) $ Next suppose that $\Lambda $ is aperiodic and
cofinal. By Proposition \ref{aperiodic-iff-effective} and Proposition \ref%
{cofinal-iff-minimal}, $\mathcal{G}_{\Lambda }$ is effective and minimal and
then by Theorem \ref{basic-simplicity-for-Steinberg-algebras}, $A_{R}\left(
\mathcal{G}_{\Lambda }\right) $ is basically simple. Since $A_{R}\left(
\mathcal{G}_{\Lambda }\right) $ is isomorphic to ${\normalsize \operatorname{KP}}%
_{R}\left( \Lambda \right) $ (Proposition \ref%
{KP-is-isomorphic-to-Steinberg-algebras}), then ${\normalsize \operatorname{KP}}%
_{R}\left( \Lambda \right) $ is also basically simple, as required.
\end{proof}

\begin{theorem}
\label{simplicity}Let $\Lambda $ be a finitely aligned $k$-graph and let $R$
be a commutative ring with $1$. Then ${\normalsize \operatorname{KP}}_{R}\left(
\Lambda \right) $ is simple if and only if $R$ is a field and $\Lambda $ is
aperiodic and cofinal.
\end{theorem}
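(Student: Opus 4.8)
The plan is to prove Theorem~\ref{simplicity} in exactly the same way as Theorem~\ref{basic-simplicity}, simply replacing the basic-simplicity characterisation of Steinberg algebras (Theorem~\ref{basic-simplicity-for-Steinberg-algebras}) with the simplicity characterisation (Theorem~\ref{simplicity-for-Steinberg-algebras}). The key observation is that simplicity is an isomorphism invariant, so by the isomorphism $\pi_{T}:{\normalsize \operatorname{KP}}_{R}\left( \Lambda \right) \rightarrow A_{R}\left( \mathcal{G}_{\Lambda }\right)$ established in Proposition~\ref{KP-is-isomorphic-to-Steinberg-algebras}, the algebra ${\normalsize \operatorname{KP}}_{R}\left( \Lambda \right)$ is simple if and only if $A_{R}\left( \mathcal{G}_{\Lambda }\right)$ is simple. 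This reduces everything to a translation between groupoid properties of $\mathcal{G}_{\Lambda }$ and graph properties of $\Lambda$.

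For the forward direction, I would suppose ${\normalsize \operatorname{KP}}_{R}\left( \Lambda \right)$ is simple, deduce via $\pi_{T}$ that $A_{R}\left( \mathcal{G}_{\Lambda }\right)$ is simple, and then apply Theorem~\ref{simplicity-for-Steinberg-algebras} to conclude that $R$ is a field and that $\mathcal{G}_{\Lambda }$ is effective and minimal. I would then invoke Proposition~\ref{aperiodic-iff-effective} to pass from ``$\mathcal{G}_{\Lambda }$ effective'' to ``$\Lambda$ aperiodic'', and Proposition~\ref{cofinal-iff-minimal} to pass from ``$\mathcal{G}_{\Lambda }$ minimal'' to ``$\Lambda$ cofinal''. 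The condition that $R$ is a field carries over unchanged, since it is a statement about $R$ itself rather than about either algebra.

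For the converse, I would assume $R$ is a field and $\Lambda$ is aperiodic and cofinal. Proposition~\ref{aperiodic-iff-effective} and Proposition~\ref{cofinal-iff-minimal} then give that $\mathcal{G}_{\Lambda }$ is effective and minimal, and since $\mathcal{G}_{\Lambda }$ is Hausdorff and ample (Example~\ref{groupouid-Glambda}), Theorem~\ref{simplicity-for-Steinberg-algebras} yields that $A_{R}\left( \mathcal{G}_{\Lambda }\right)$ is simple. Transporting simplicity back across the isomorphism $\pi_{T}$ of Proposition~\ref{KP-is-isomorphic-to-Steinberg-algebras} shows that ${\normalsize \operatorname{KP}}_{R}\left( \Lambda \right)$ is simple, as required.

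I do not expect any genuine obstacle here: all of the substantive work has already been carried out in the earlier sections, namely the Steinberg-algebra isomorphism, the effective/aperiodic equivalence, the minimal/cofinal equivalence, and the cited simplicity result for Steinberg algebras. The only point requiring a moment's care is to note that simplicity (unlike injectivity of a fixed map) is preserved in both directions by an algebra isomorphism, so that the equivalence genuinely transfers; once this is observed, the proof is a direct assembly of the four named results in parallel with the proof of Theorem~\ref{basic-simplicity}.
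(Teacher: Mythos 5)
Your proposal is correct and is essentially the paper's own proof: both directions are a direct assembly of Proposition~\ref{KP-is-isomorphic-to-Steinberg-algebras}, Proposition~\ref{aperiodic-iff-effective}, Proposition~\ref{cofinal-iff-minimal}, and Theorem~\ref{simplicity-for-Steinberg-algebras}. The only cosmetic difference is in the forward direction, where the paper first notes that simplicity implies basic simplicity and cites Theorem~\ref{basic-simplicity} to get aperiodicity and cofinality, reserving Theorem~\ref{simplicity-for-Steinberg-algebras} for the claim that $R$ is a field, whereas you extract all three conditions at once from Theorem~\ref{simplicity-for-Steinberg-algebras}; both are valid and use the same ingredients.
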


\begin{proof}
$\left( \Rightarrow \right) $ First suppose that ${\normalsize \operatorname{KP}}%
_{R}\left( \Lambda \right) $ is simple. Then ${\normalsize \operatorname{KP}}%
_{R}\left( \Lambda \right) $ is also basically simple and Theorem \ref%
{basic-simplicity} implies that $\Lambda $ is aperiodic and cofinal. On the
other hand, since ${\normalsize \operatorname{KP}}_{R}\left( \Lambda \right) $ is
simple, then by Proposition \ref{KP-is-isomorphic-to-Steinberg-algebras}, $%
A_{R}\left( \mathcal{G}_{\Lambda }\right) $ is also simple and by Theorem %
\ref{simplicity-for-Steinberg-algebras}, $R$ is a field, as required.

$\left( \Leftarrow \right) $ Next suppose that $R$ is a field and $\Lambda $
is aperiodic and cofinal. By Proposition \ref{aperiodic-iff-effective} and
Proposition \ref{cofinal-iff-minimal}, $\mathcal{G}_{\Lambda }$ is effective
and minimal. Hence, by Theorem \ref{simplicity-for-Steinberg-algebras}, $%
A_{R}\left( \mathcal{G}_{\Lambda }\right) $ is simple and by Proposition \ref%
{KP-is-isomorphic-to-Steinberg-algebras}, so is ${\normalsize \operatorname{KP}}%
_{R}\left( \Lambda \right) $.
\end{proof}

\end{document}